\providecommand{\SG@adddot}{.}
   \providecommand{\href}[2]{}%
   \renewcommand{\SG@adddot}{}}{}
\def\tospace#1{\@tospace#1 \tospace@delimiter}
\def\@tospace#1 #2\tospace@delimiter{#1}
\def\MR#1{\edef\MR@help{{http://www.ams.org/mathscinet-getitem?mr=\tospace{#1}}{\tospace{#1}}}%
\expandafter\href\MR@help\SG@adddot}
\providecommand*{\backref}{}
\providecommand*{\backrefalt}{}
\renewcommand*{\backref}[1]{}
\renewcommand*{\backrefalt}[4]{%
	\ifcase #1 %
	\or
	  Cited page #2.
	\else
	  Cited pages #2.
	\fi
}
\providecommand{\texorpdfstring}[2]{#1}
\newcommand{\BB}{\mathcal{B}}
\newcommand{\CC}{\mathcal{C}}
\newcommand{\DD}{\mathcal{D}}
\newcommand{\EE}{\mathcal{E}}
\newcommand{\FF}{\mathcal{F}}
\newcommand{\FFF}{\mathbf{F}}
\newcommand{\HH}{\mathcal{H}}
\newcommand{\HHH}{\mathbf{H}}
\newcommand{\JJ}{\mathcal{M}}
\newcommand{\KK}{\mathcal{K}}
\newcommand{\LL}{\mathcal{L}}
\newcommand{\MM}{\mathcal{M}}
\newcommand{\TT}{\mathcal{T}}
\newcommand{\TTT}{\mathbb{T}}
\newcommand{\UU}{\mathcal{U}}
\DeclareMathOperator{\Leb}{Leb}
\DeclareMathOperator{\dLeb}{dLeb}
\DeclareMathOperator{\Disc}{Disc}
\newcommand{\ii}{\mathbf{i}}
\newcommand{\id}{{\rm id}}
\newcommand{\real}{\mathbb{R}}
\newcommand{\R}{\real}
\newcommand{\complex}{\mathbb{C}}
\newcommand{\integer}{\mathbb{Z}}
\newcommand{\Z}{\integer}
\newcommand{\Cs}{C_{\#}}
\newcommand{\epsilons}{\epsilon_{\#}}
\newcommand{\st}{\;|\;}
\DeclareMathOperator{\Jac}{Jac}
\newcommand{\ZZ}{\mathcal{Z}}
\newcommand{\VV}{\mathcal{V}}
\renewcommand{\tilde}{\widetilde}
\newcommand{\leqc}{\leq_c{}}
\newcommand{\coloneqq}{\mathrel{\mathop:}=}
\newcommand{\dd}{\, {d}}
\newcommand{\ress}{r_{\rm ess}}
\newcommand{\bm}{{\zeta}}
\newcommand{\bphi}{{\Phi}}
\newcommand{\brho}{{\rho}}
\newcommand{\bell}{{\zeta'}}
\newcommand{\regg}{\gamma}
\newcommand{\LSRB}{\LL}
\newcommand{\norm}[2]{\left\| #1 \right\|_{#2}}
\newcommand{\nor}[1]{\left\| #1 \right\|}
\DeclareMathOperator{\Card}{Card}
\DeclareMathOperator{\supp}{supp}
\newtheorem{proposition}{Proposition}[section]
\newtheorem{lemma}[proposition]{Lemma}
\newtheorem{theorem}[proposition]{Theorem}
\newtheorem{corollary}[proposition]{Corollary}
\newtheorem{definition}[proposition]{Definition}
\theoremstyle{remark}
\newtheorem{remark}[proposition]{Remark}
\numberwithin{equation}{section}
\begin{document}

\title[Spaces for piecewise cone hyperbolic maps]{Banach
spaces for  piecewise cone hyperbolic maps}
\author{Viviane Baladi and S\'{e}bastien Gou\"{e}zel}

\address{D.M.A., UMR 8553, \'{E}cole Normale Sup\'{e}rieure,  75005 Paris, France}
\email{viviane.baladi@ens.fr}
\address{IRMAR, CNRS UMR 6625,
Universit\'{e} de Rennes 1, 35042 Rennes, France}
\email{sebastien.gouezel@univ-rennes1.fr}
\thanks{We are very grateful to Carlangelo Liverani for  conversations, encouragements,
and showing us a preliminary version of a manuscript on coupled
Anosov diffeomorphisms. Many thanks to P\'{e}ter B\'alint for
important conversations and patient explanations on billiards,
and to Duncan Sands for his enlightening comments on Lozi
maps. Many thanks also to the anonymous referees for their very
accurate comments.
A crucial part of this work was done during the 2008 Semester
on Hyperbolic Dynamical Systems in the Schr\"{o}dinger Institut in
Vienna: We express our gratitude to the organisers. VB is
partially supported by ANR-05-JCJC-0107-01.}
\date{February 12, 2010}

\begin{abstract}
We consider piecewise cone hyperbolic systems satisfying a
bunching condition and we obtain a bound on the essential
spectral radius of the associated weighted transfer operators
acting on anisotropic Sobolev spaces. The bunching condition is
always satisfied in dimension two, and our results give a
unifying treatment of the work of Demers-Liverani
\cite{demers_liverani} and our previous work
\cite{baladi_gouezel_piecewise}. When the complexity is
subexponential, our bound implies a spectral gap for the
transfer operator corresponding to the physical measures in
many cases (for example if $T$ preserves volume, or if the
stable dimension is equal to $1$ and the unstable dimension is
not zero).
\end{abstract}

\maketitle
\section{Introduction}

The ``spectral'' or ``functional'' approach to study
statistical properties of dynamical systems with enough
hyperbolicity, originally limited to one-dimensional dynamics,
has  greatly expanded its range of applicability  in recent
years. The following spectral gap result of\footnote{Various
improvements of this result have been  obtained since then,
\cite{gouezel_liverani, GL_Anosov2, bt_aniso, bt_zeta}, in
particular in the Axiom A setting.}
 Blank--Keller--Liverani \cite{bkl_spectre_anosov} appeared in 2002:

\begin{theorem}\label{meta}
Let $T : X\to X$ be a $C^3$ Anosov diffeomorphism on a compact
Riemannian manifold, with a dense orbit. Define a bounded
linear operator by
\begin{equation}\label{lastar}
\LL \omega = \frac{\omega \circ T^{-1}}{|\det DT \circ T^{-1}|}\, ,
\quad \omega \in L^\infty(X)\, .
\end{equation}
Then there exist a Banach space $\BB$ of distributions on $X$,
containing $C^\infty(X)$, and a bounded operator on $\BB$,
coinciding with $\LL$ on $\BB \cap L^\infty(X)$ and  denoted
also by $\LL$, with the following  properties: The spectral
radius of $\LL$ on $\BB$ is equal to one, the essential
spectral radius of $\LL$ on $\BB$ is strictly smaller than one,
$\LL$ has a fixed point in $\BB$. Finally, $1$ is the only
eigenvalue on the unit circle, and it is simple.
\end{theorem}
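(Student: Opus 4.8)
\emph{Sketch of the intended argument.} I would follow the by-now standard route: realise $\LL$ as a bounded operator on a suitable \emph{anisotropic} Banach space of distributions on which it is quasi-compact, read off the claims on the spectral radius and the essential spectral radius, and finally use topological transitivity to pin down the peripheral spectrum by a Perron--Frobenius argument. First, fix a finite atlas of charts adapted to the hyperbolic splitting, so that in each chart the stable cone is close to a fixed $d_s$-dimensional coordinate subspace and the unstable cone to the complementary $d_u$-dimensional one; call a small $C^2$ submanifold \emph{admissible} if its tangent spaces lie in the stable cone, and note that $DT^{-1}$ maps the stable cone strictly into itself, so iterated $T^{-1}$-images of admissible submanifolds stay admissible. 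Fix an integer $p\ge1$ and a small $q\in(0,1)$ and put
\[
\nor{f}_w \coloneqq \sup_W\ \sup_{\phi\in C^q(W),\ |\phi|_{C^q}\le 1}\ \Bigl|\int_W f\,\phi\dd\Leb_W\Bigr| ,
\]
the supremum over admissible disks $W$, and let $\nor{\cdot}_\BB$ be the stronger norm obtained by additionally applying, before the pairing, differential operators of order $\le p$ in unstable-cone directions; let $\BB$ and $\BB_w$ be the completions of $C^\infty(X)$ for $\nor{\cdot}_\BB$ and $\nor{\cdot}_w$. Then $C^\infty(X)\subset\BB$ by construction, and since $\BB$ has positive regularity transversally to admissible (stable) disks, its elements are genuine distributions of finite order; boundedness of $\LL$ on $\BB$ and on $\BB_w$ follows from the change of variables $\int_{TW}(\LL f)\,\phi = \int_W f\,(\phi\circ T)\,J$, in which the weight $|\det DT|^{-1}$ of \eqref{lastar} is exactly what cancels the ambient Jacobian and leaves only the Jacobian $J$ of $T$ \emph{along stable leaves} --- one uses $T\in C^3$ here to control the $C^{q+1}$-geometry of the pushed disks and the distortion of $J$.

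\textbf{The Lasota--Yorke inequality (the main point).} The analytic heart is to prove
\[
\nor{\LL^n f}_\BB\ \le\ C\,\theta^{\,n}\,\nor{f}_\BB\ +\ C_n\,\nor{f}_{\BB_w},\qquad n\ge 1,\ \theta<1 .
\]
Iterating the change of variables, testing $\LL^n f$ against a $C^q$ function $\phi$ on an admissible disk $W$ amounts to testing $f$ against $\phi\circ T^n$ times the stable-leaf Jacobian $J_n$ on the \emph{large} admissible disk $T^{-n}W$, which one must cut into $O(\lambda^{-nd_s})$ admissible pieces of fixed size, where $\lambda<1$ is the stable contraction rate. The decisive balance is that $J_n\asymp\lambda^{nd_s}$ compensates precisely the number of pieces, so the weak norm does not grow; meanwhile the unstable-cone derivatives defining $\nor{\cdot}_\BB$, once pulled through $T^n$, acquire a factor $\|DT^{-n}|_{E^u}\|^{p}\le\Cs\,\mu^{-np}$ with $\mu>1$ the unstable expansion rate, while the lower-order terms generated by the chain rule carry at most $p-1$ such derivatives and get absorbed into $C_n\nor{f}_{\BB_w}$; bounded distortion (from $T\in C^3$) keeps all constants uniform in $n$, giving $\theta\asymp\mu^{-p}<1$. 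Designing the pair of norms so that these mechanisms interlock cleanly, and controlling uniformly the combinatorics of the chopped-up preimage disks, is the step I expect to be most delicate.

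\textbf{Quasi-compactness, spectral radius, fixed point.} The inclusion $\BB\hookrightarrow\BB_w$ is compact: on a fixed admissible disk the unit ball of $C^q$ test functions is relatively compact for a slightly weaker H\"older norm, and the extra unstable derivative gives compactness of the $\BB$-unit ball inside $\BB_w$ by an Arzel\`a--Ascoli/Rellich argument. Hennion's theorem applied to the Lasota--Yorke inequality then yields $\ress(\LL,\BB)\le\theta<1$. Moreover the weak norm is not expanded, $\nor{\LL^n f}_{\BB_w}\le C\nor{f}_{\BB_w}$ uniformly in $n$ (again because the stable-leaf Jacobian balances the piece count), so the Lasota--Yorke inequality forces the spectral radius of $\LL$ on $\BB$ to be $\le 1$; it is $\ge 1$ because $\langle\LL^n\mathbf 1,\mathbf 1\rangle_{\Leb}=\vol(X)>0$ prevents $\nor{\LL^n\mathbf 1}_\BB$ from shrinking. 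Hence the spectral radius equals $1$, and since $\ress<1$ the peripheral spectrum is a finite set of eigenvalues with finite-dimensional generalised eigenspaces. As $\LL$ is a positive operator preserving the closed cone generated by the nonnegative smooth functions, the generalised Krein--Rutman theorem (using quasi-compactness) shows that $1$ is an eigenvalue with a nonzero positive eigenvector $h$, a positive Radon measure with $\langle h,\mathbf 1\rangle_{\Leb}>0$; after normalisation $h$ is the density with respect to $\Leb$ of a $T$-invariant probability measure $\mu$, the SRB measure, and in particular $\LL$ has a fixed point in $\BB$.

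\textbf{The peripheral spectrum.} A transitive Anosov diffeomorphism of a connected compact manifold is topologically mixing, so $\mu$ is ergodic and mixing. First, $1$ carries no Jordan block: a generalised eigenvector $g$ with $(\LL-\Id)g=c\,h$ would give $\LL^n g=g+nc\,h$, whence, using $\LL^{*}\Leb=\Leb$, $\langle g,\mathbf 1\rangle_{\Leb}=\langle\LL^n g,\mathbf 1\rangle_{\Leb}=\langle g,\mathbf 1\rangle_{\Leb}+nc\,\vol(X)$ for all $n$, forcing $c=0$. Next, for any peripheral eigendistribution $f$, $\LL f=e^{i\alpha}f$, positivity gives $\LL|f|\ge|\LL f|=|f|$ while $\langle\LL|f|-|f|,\mathbf 1\rangle_{\Leb}=0$, so $\LL|f|=|f|$; thus $|f|$ is a positive invariant finite measure (the regularity built into $\BB$ is what makes this a well-defined object controlled by $\mu$), hence a scalar multiple of $h$ by ergodicity. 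Writing $f=\psi\,h$, the eigenequation becomes $\psi\circ T=e^{-i\alpha}\psi$ $\mu$-a.e.; then $|\psi|$ is $T$-invariant, hence constant, so after normalisation $|\psi|\equiv1$, and $\psi\circ T=e^{-i\alpha}\psi$ with $|\psi|\equiv1$ forces $e^{i\alpha}=1$ by mixing and then $\psi$ constant by ergodicity. Therefore $1$ is the only eigenvalue of $\LL$ on the unit circle, its eigenspace is spanned by $h$, and (by the first point) it has no Jordan block, i.e.\ it is simple.
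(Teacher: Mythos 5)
The paper itself does not prove Theorem~\ref{meta}: it is quoted from Blank--Keller--Liverani \cite{bkl_spectre_anosov}, and the route you sketch (norms defined by pairing against $C^q$ test functions on admissible stable disks, a Lasota--Yorke inequality, Hennion, then a positivity/ergodicity analysis of the peripheral spectrum) is the standard one of \cite{bkl_spectre_anosov, gouezel_liverani, demers_liverani}, not the Triebel-space construction of the present paper; as a strategy it is sound. However, as written there are two concrete gaps. First, with your definitions the pair $(\BB,\BB_w)$ does not satisfy the hypotheses of Hennion's theorem: $\BB_w$ uses the \emph{same} disks and the \emph{same} $C^q$ test class as the zeroth-order part of $\BB$, and then the inclusion $\BB\hookrightarrow\BB_w$ is not compact. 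Indeed, in a chart with stable coordinate $y$, take $f_k(x,y)=k^{q}\cos(k y)\chi(x,y)$ with $\chi$ a fixed bump: unstable derivatives only hit $\chi$, so $f_k$ is bounded in $\BB$; testing against $\phi_k(y)=c\,k^{-q}\cos(ky)\psi(y)$, which has $C^q$ norm $O(1)$, shows $\nor{f_k}_w\geq c'>0$; yet $f_k\to 0$ against every fixed smooth test pair, so (since you claim $\BB_w$ consists of distributions) no subsequence can converge in $\BB_w$. The fix, as in \cite{bkl_spectre_anosov, gouezel_liverani}, is to make the weak norm genuinely weaker, e.g.\ test functions in the unit ball of $C^{q+1}$ (or one fewer unstable derivative and one more degree of test smoothness). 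Once this is done, your Lasota--Yorke argument needs the step you omitted: on each preimage piece one must replace $\phi\circ T^n$ by its leafwise average, the error being $O(\lambda^{qn})$ in $C^q$; this is where the stable contraction and the regularity $q$ of the test class enter, and the resulting rate is $\theta\asymp\max(\lambda^{q},\mu^{-p})$ (further limited by the H\"older regularity available from $T\in C^3$), not $\mu^{-p}$ alone. Simply declaring that all terms with fewer than $p$ unstable derivatives ``get absorbed into $C_n\nor{f}_{w}$'' is not available once $\BB_w$ has been weakened so as to make the embedding compact.

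Second, in the peripheral-spectrum step you write $\LL|f|\geq|\LL f|$ for an eigenvector $f$ with $\LL f=e^{i\alpha}f$, but elements of $\BB$ are distributions and $|f|$ is undefined; the parenthetical ``the regularity built into $\BB$ makes this well defined'' is precisely the nontrivial point. In \cite{bkl_spectre_anosov} and \cite{gouezel_liverani} one first proves that peripheral (generalised) eigenvectors are measures of order zero, absolutely continuous with respect to the invariant state, typically via uniform bounds on $\frac1n\sum_{k<n}e^{-ik\alpha}\LL^k$ applied to smooth functions; only then can the lattice/ergodicity argument you describe be run. Relatedly, the appeal to Krein--Rutman is shaky (the cone of nonnegative functions has empty interior in $\BB$ and is not obviously reproducing); the standard and simpler route to the fixed point is that the Ces\`aro averages $\frac1n\sum_{k<n}\LL^k\mathbf 1$ are bounded in $\BB$ and, by quasi-compactness, have a limit point $h$ with $\LL h=h$ and $\int h\,\dd\Leb=\vol(X)\neq0$. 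Finally, your use of topological mixing tacitly assumes $X$ connected (otherwise roots of unity can appear in the peripheral spectrum); this should be stated. With these repairs the proposal becomes the standard proof of the cited theorem.
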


It is a remarkable fact that ``Perron-Frobenius-type'' spectral
information as in the above theorem (possibly with a nonsimple
real maximal eigenvalue of finite multiplicity and other
eigenvalues on the unit circle) gives simpler proofs of many
known theorems, but also new information. Among these
consequences, let us just mention: Existence of finitely many
physical measures whose basins have full measure (working with
slightly more general transfer operators, one can treat other
equilibrium states), exponential decay of correlations for
physical measures and H\"{o}lder observables, statistical and
stochastic stability, linear response and the linear response
formula, central and local limit theorems, location of the
poles of dynamical zeta functions and zeroes of dynamical
determinants, smooth Anosov systems with holes, etc.
(We just recall that the dual of $\LL$ preserves Lebesgue measure,
so that the fixed point of $\LL$ corresponds to the physical
measure. See \cite{bt_aniso} and \cite{GL_Anosov2}.)

One of the advantages of this ``functional approach'' is that
it bypasses the construction of Markov partitions and the need
to introduce artificial ``one-sided'' expanding endomorphisms
(such endomorphisms only retain a  small part of the smoothness
of the original hyperbolic diffeomorphism).

Billiards with convex scatterers, also called Sinai billiards,
are among the most natural and interesting dynamical systems.
They are uniformly hyperbolic, preserve Liouville measure, but
they are only piecewise smooth. Analyzing the difficulties
posed by the singularities has been an important challenge for
mathematicians, and it is only in 1998 that L.-S. Young
\cite{lsyoung_annals} proved that the Liouville measure enjoys
exponential decay of correlations for two-dimensional Sinai
billiards (under a finite horizon condition, which was shortly
thereafter removed by Chernov \cite{chernov_decay}). It should
be noted that these results were in fact obtained for a
discrete-time version of the billiard flow. Indeed the question
of whether the original two-dimensional continuous-time Sinai
billiard enjoys decay of correlations is to this day still
open. (Chernov \cite{chernov_stretched_flow} recently obtained
stretched exponential upper bounds.) It is well known that the
continuous-time case is much more difficult, and it seems that
the ideas of Dolgopyat \cite{dolgopyat_decay} which were
exploited in several smooth hyperbolic situations are not
compatible with the tools used in \cite{lsyoung_annals} for
example. We believe that a new, ``functional,'' proof (via a
spectral gap result for the transfer operator \eqref{lastar} on
a suitable anisotropic Banach space of distributions)  of
exponential decay of correlations for {\it discrete-time}
surface Sinai billiards  will be a key stepping stone towards
the  expected proof of exponential decay of correlations for
the {\it continuous-time} Sinai billiards.

The recent paper of Demers-Liverani \cite{demers_liverani} was
a first breakthrough in this direction, as we explain next.
Since none of the spaces of \cite{gouezel_liverani, GL_Anosov2,
bt_aniso, bt_zeta} behave well with respect to multiplication
by characteristic functions of sets, they cannot be used for
systems with singularities. Demers--Liverani
\cite{demers_liverani} therefore introduced some new Banach
spaces, on which transfer operators associated to
two-dimensional piecewise hyperbolic systems admit a spectral
gap. However, the construction and the argument of
\cite{demers_liverani} are quite intricate, in particular,
pieces of stable or unstable manifolds are iterated by the
dynamics, and the way they are cut by the discontinuities has
to be studied in a very careful way, in the spirit of
\cite{lsyoung_annals} and  \cite{chernov_decay}. As a
consequence, adapting the approach in \cite{demers_liverani} to
billiards (which are not piecewise hyperbolic, stricto sensu,
because their derivatives blow up along the singularity lines)
is daunting.

Another progress in the direction of a modern proof of
exponential decay of correlations for discrete-time billiards
is our previous paper \cite{baladi_gouezel_piecewise}. There,
we showed that ideas of Strichartz \cite{strichartz} imply that
classical  anisotropic Sobolev spaces $H_p^{t,s}$  in the
Triebel-Lizorkin class \cite{triebel_III} (Definition ~
\ref{space}, these spaces had been introduced in dynamics in
\cite{baladi_Cinfty}) are suitable for piecewise hyperbolic
systems, under the condition that the system admits a smooth
(at least $C^1$) stable foliation. Unfortunately, although it
holds for several nontrivial examples, this condition is pretty
restrictive: In general, the foliations are only measurable!

In the present paper, we consider piecewise smooth piecewise
hyperbolic dynamics. We are able to remove the assumption of
smoothness of the stable  foliation, whenever the hyperbolicity
exponents of the system satisfy a {\it bunching} condition (see
\eqref{bunch} and \eqref{bunch2} below). This condition is
rather standard in smooth hyperbolic dynamics, where it ensures
that the dynamical foliations are $C^1$ instead of the weaker
H\"{o}lder condition which holds in full generality (see
\cite{hirsch_pugh_shub}, or, e.g., \cite{katok}). The bunching
condition is always satisfied in codimension one (in
particular, it holds in dimension two, so that our results
apply to physical measures of all surface piecewise hyperbolic
systems previously covered in \cite{lsyoung_annals} or
\cite{demers_liverani}, in particular to hyperbolic Lozi maps
possessing a compact invariant domain, see Appendix
\ref{app_physical}). The present paper requires the dynamics to
be $C^{1+\alpha}$ on each (closed) domain of smoothness, and
therefore does not apply directly to discrete-time Sinai
billiard. However, we expect that it will be possible to adapt
the methods here to obtain the desired functional proof of
exponential decay of correlations for two-dimensional Sinai
billiards. We shall use the terminology ``cone-hyperbolic'' to
stress that hyperbolicity is defined in terms of cones and that
there is \emph{a priori} no invariant stable distribution,
contrary to our previous paper \cite{baladi_gouezel_piecewise}.

We use the Triebel spaces $H_p^{t,s}$ as  building blocks in
the construction of our new Banach spaces $\HHH_{p}^{t,s}(R)$
(Definition  ~\ref{defnorm})
and $\HHH$ (see \eqref{normeHnR}). As
a consequence, we may exploit, as we did in
\cite{baladi_gouezel_piecewise}, the rich existing theory (in
particular regarding interpolation), and use again the results
of Strichartz \cite{strichartz}.

The new ingredient with respect to
\cite{baladi_gouezel_piecewise} is that we define our norm by
considering the Triebel norm in $\real^d$ through suitable $C^1$ charts,
taking now the {\it supremum} over {\it all} cone-admissible
charts  $\FF$ (Definition ~ \ref{deffol}). We use the bunching
assumption to show that the family is invariant under iteration
(Lemma ~ \ref{lemcompose}). {\it Indeed, this is how we avoid
the necessity for a smooth stable foliation.} As in
\cite{baladi_gouezel_piecewise}, we do not iterate single
stable or unstable manifolds (contrary to \cite{lsyoung_annals,
chernov_decay, demers_liverani}), and we do not need to match
nearby stable or unstable manifolds: Everything follows from an
appropriate functional analytic framework.

Our main result, Theorem ~ \ref{MainTheorem}, is an upper bound
on the essential spectral radius of weighted transfer operators
associated to cone hyperbolic systems satisfying the bunching
condition and acting on a Banach space $\HHH$ of anisotropic
distributions. If the complexity growth (as measured by
\eqref{cplx}) is subexponential, and if either $\det DT\equiv
1$, or $d_s=1$ and $d_u >0$, then one can always choose the
Banach space so that the transfer operator \eqref{lastar} has
essential spectral radius strictly smaller than $1$, and thus a
spectral gap. This spectral gap property gives finiteness and
exponential mixing (up to a finite period) of the physical
measures (see e.g.~Theorem 33 in
\cite{baladi_gouezel_piecewise}, or its generalization below,
Theorem~ \ref{thm_physical}).

Let us mention here that all existing results on piecewise
hyperbolic systems, including the present one, require some
kind of transversality condition between the discontinuity
hypersurfaces  and the stable or unstable dynamical directions
or cones (see Definition \ref{transs}). (This condition is
satisfied for billiards, modulo Remark ~ \ref{forbilliards}.)

The paper is organized as follows. In Section~ \ref{22} we
define formally the dynamical systems for which our results
hold and the anisotropic spaces $\HHH$ on which the transfer
operator will act: Subsection ~ \ref{setting} contains the
assumptions on the dynamics and the statement of our main
result, Theorem ~ \ref{MainTheorem}. In Subsection~
\ref{sec:def_fol}, we recall the definition of the Triebel
spaces $H^{t,s}_p$ and we define the cone-admissible foliations
$\FF(C_0,C_1)$, depending on two parameters $C_0$ and $C_1$
that should be suitably chosen. In Subsection ~ \ref{spaces},
we combine these two ingredients, together with a ``zoom'' by a
large factor $R>1$, to construct the Banach spaces of
distributions $\HHH_p^{t,s}(R, C_0, C_1)$. Subsection ~
\ref{reducr} contains a technical step which reduces our main
result to a more convenient form, Theorem ~
\ref{MainTheorembis}, constructing along the way the final
Banach spaces $\HHH$ from the $\HHH_{p}^{t,s}(R, C_0, C_1)$.

Section ~ \ref{invv} is devoted to the proof of invariance of
the class $\FF$ of admissible foliations. This is the heart of
our argument, and the main new technical ingredient is Lemma ~
\ref{lemcompose}. Its proof is based on the usual
Hadamard-Perron graph transform ideas (see \eqref{heart}--\eqref{heart''}),  but
requires to be spelt out in full detail in order to discover
the appropriate conditions in Definition ~ \ref{deffol}.

Section ~  \ref{sec:local} contains various results on the
local spaces $H^{t,s}_p$, in particular the corresponding
``Leibniz'' (Lemma ~ \ref{Leib}) and ``chain-rule'' (Lemmas
~\ref{CompositionDure} and \ref{lemcomposeD1alpha}) estimates,
and the fact that characteristic functions of appropriate sets
are bounded multipliers (Lemma ~ \ref{lem:multiplier}). These
results are mostly adapted from
\cite{baladi_gouezel_piecewise}.  Subsection ~ \ref{basicc}
also contains a compactness embedding statement for spaces
$\HHH_{p}^{t,s}(R, C_0, C_1)$ (Lemma ~ \ref{embed}) which is
crucial for our  Lasota-Yorke-type estimate in the proof of our
main result.

Finally, Section ~ \ref{mainsec} contains the proof of Theorem
~ \ref{MainTheorembis}.

Four appendices contain some complements: Appendices \ref{apA}
and \ref{apB} contain useful technical results, Appendix
\ref{app_general} describes some extensions of our main result
(which allow us in particular to sometimes weaken our transversality
assumption),
and Appendix \ref{app_physical} gives consequences  concerning physical measures
of our
main result and its extension.

Note that the methods in this paper do not allow to exploit the
additional smoothness available if $T$ is Anosov or Axiom A and
$C^ r$ for $r> 2$ (even if they satisfy the bunching
condition), contrarily to \cite{gouezel_liverani, GL_Anosov2,
bt_aniso, bt_zeta}. The present work is thus complementary to
the approach of \cite{gouezel_liverani, GL_Anosov2, bt_aniso,
bt_zeta} which gives more information in the smooth case (but
fails when there are singularities).

\section{Definitions and statement of the spectral theorem}

\label{22}

\subsection{The main result}
\label{setting}
Let $X$ be a Riemannian manifold of dimension $d \ge 2$ without
boundary, and let $X_0$ be a compact subset of $X$. We view
$1\le d_s\le d-1$ and $d_u=d-d_s\ge 1$ as being fixed integers,
so constants may depend on these numbers\footnote{Our methods
also work when $d_s=0$ or $d_u=0$, but they do not improve on
the results of \cite{baladi_gouezel_piecewise} since the stable
and unstable manifolds are automatically smooth in this case.}.
We call $C^1$ hypersurface a codimension-one $C^1$ submanifold
of $X$, possibly  with boundary. We say that a function $g$ is
$C^r$ for $r >0$ if $g$ is $C^{[r]}$ and all partial
derivatives of order $[r]$ are $r-[r]$-H\"{o}lder. The norm of a
vector (in the tangent space of $X$, or in $\R^d$) will be
denoted by $|v|$.

Hyperbolicity will be defined in terms of cones, and we shall
need the cones to satisfy some  form of convexity (in \eqref{useconv}).
Even the simplest linear cone $|x|^2 \le |y_1|^2+ |y_2|^2$ in $\real^3$
is not convex in the usual sense (it contains $(1,1,0)$ and $(1,0,1)$ but not
$(1,1/2,1/2)$).
Therefore, we introduce the following definition:

\begin{definition}\label{defconvtr}
A cone of dimension $d'\in [1,d-1]$ in $\real^d$ is a closed
subset $C$ of $\R^d$ with nonempty interior, invariant under
scalar multiplication, such that $d'$ is the maximal dimension
of a vector subspace included in $C$.

A cone $C$ of dimension $d'$ is transverse to a vector subspace
$E$ of $\R^d$ if $E$ contains a subspace of dimension $d-d'$
which intersects $C$ only at $0$.

A cone $C$ is convexly transverse to a vector subspace $E$ if
$C$ is transverse to $E$ and, additionally, for all $z\in
\R^d$, $C\cap (E+z)$ is convex.

Two cones $C_u$ and $C_s$, of respective dimensions $d_{u}$ and
$d_s$, with $d_{u}+d_s=d$, are convexly transverse if $C_u\cap
C_s=\{0\}$, for any vector subspace $E_s \subset C_s$ the cone
$C_{u}$ is convexly transverse to $E_s$, and for any vector
subspace $E_u \subset C_u$ the cone $C_{s}$ is convexly
transverse to $E_u$.
\end{definition}

We claim that if $A:\real^{d_s} \to \real^{k}$ is a nonzero
linear map then the set $C_A=\{(x,y)\in \real^{d_u}\times
\real^{d_s} \st |x|\leq |Ay|\}$ (which obviously contains the
$d_s$-dimensional vector subspace $\{(0,y)\}$) is a
$d_s$-dimensional cone which is convexly transverse to
$\{(x,0)\}$. See Appendix ~\ref{apB} for the easy proof of this
claim. It follows that, if $C_A$ and $C_{A'}$ are cones in
$\real^d$ associated (not necessarily for the same coordinates)
to nonzero linear maps
 $A:\real^{d_s} \to \real^{k_1}$ and $A':\real^{d_u} \to \real^{k_2}$,
then $C_A$ and $C_{A'}$ are convexly transverse if and only if
$C_A\cap C_{A'}=\{0\}$. Definition~\ref{defconvtr} is slightly
more flexible than such linear cones. More importantly, it
sheds light on the essence of the convexity assumption.

\begin{definition}
[Piecewise $C^{1+\alpha}$ cone hyperbolic maps] \label{PiecHyp}
Let $\alpha\in (0,1]$.   A piecewise $C^{1+\alpha}$ (cone)
hyperbolic map is a map $T: X_0 \to X_0$ such that there exist
finitely many pairwise disjoint open subsets $(O_i)_{i\in I}$,
covering Lebesgue almost all $X_0$, so that each $\partial O_i$
is a finite union of $C^1$ hypersurfaces, and so that for each
$i\in I$:

(1) There exists a $C^{1+\alpha}$ map $T_i$ defined on a
neighborhood $\tilde O_i$ of $\overline{O_i}$ in $X$, which is
a diffeomorphism onto its image and such that
$T|_{O_i}=T_i|_{O_i}$.

(2) There exist two families of convexly transverse cones
$\CC^{(u)}_i(q)$ and $\CC^{(s)}_i(q)$ in the tangent space
$\TT_q X$, depending continuously on $q\in \overline{O_i}$, so
that $\CC^{(u)}_i(q)$ is $d_u$-dimensional and $\CC_i^{(s)}(q)$
is $d_s$-dimensional, and such that:

(2.a) For each  $q\in \overline{ O_i}\cap T_i
^{-1}(\overline{O_j})$, then $DT_i(q) \CC^{(u)}_i(q) \subset
\CC^{(u)}_j(T_i (q))$, and there exists $\lambda_{i,u}(q)>1$
such that
  \begin{equation*}
  |DT_i (q) v|\ge \lambda_{i,u}(q) |v| \, ,\forall v\in \CC^{(u)}_i(q) \, .
  \end{equation*}

(2.b) For each  $q\in \overline{ O_i}\cap T_i
^{-1}(\overline{O_j})$, then $DT_i^{-1}(T_i(q))
\CC^{(s)}_j(T_i(q)) \subset \CC^{(s)}_i(q)$, and there exists
$\lambda_{i,s}(q)\in (0,1)$ such that
  \begin{equation*}
  |DT_i^{-1} (T_i(q))v|\ge \lambda_{i,s}^{-1}(q) |v|\, ,
  \forall v\in \CC^{(s)}_j(T_i(q)) \, .
  \end{equation*}
\end{definition}

Note that we do not assume that $T$ is continuous or injective
on $X_0$.

We introduce some notation. For $n\ge 1$, and $\ii
=(i_0,\dots,i_{n-1})\in I^n$ we let $T_\ii^n=T_{i_{n-1}}\circ
\dots \circ T_{i_0}$, which is defined on a neighborhood of
$\overline{O_\ii}$, where $O_{(i_0)}=O_{i_0}$, and
  \begin{equation}
  O_{(i_0,\dots,i_{n-1})}=\{q\in O_{i_0} \st T_{i_0}(q)\in
  O_{(i_1,\dots,i_{n-1})}\}\, .
  \end{equation}
Denote by $\lambda_{\ii, s}^ {(n)}(q)<1$ and
$\lambda_{\ii,u}^{(n)}(q)>1$ the weakest contraction and
expansion coefficients of $T^n_{\ii}$ at $q$, and by
$\Lambda_{\ii,s}^{(n)}(q)\leq \lambda_{\ii, s}^ {(n)}(q)$ and
$\Lambda_{\ii,u}^{(n)}(q) \geq \lambda_{\ii,u}^{(n)}(q)$ its
strongest contraction and expansion coefficients. We put
  \begin{equation*}
  \lambda_{s,n}(q)=\sup_{\ii} \lambda_{\ii, s}^ {(n)}(q)<1\, ,
  \quad \lambda_{u,n}(q)=\inf_{\ii}\lambda_{\ii,u}^{(n)}(q)>1 \, ,
  \end{equation*}
where the infimum and the supremum are restricted to those
$\ii$ such that $q\in \overline O_\ii$.

As is usual in piecewise hyperbolic settings, we shall require
a transversality assumption on the discontinuity
hypersurfaces\footnote{This condition is unrelated to the
``convex transversality" assumption on the cones!}:

\begin{definition}[Transversality condition]\label{transs}
Let $T$ be a piecewise $C^{1+\alpha}$ hyperbolic map. We say
that $T$ satisfies the transversality condition if  each
$\partial O_i$ is a finite union of $C^1$ hypersurfaces $K_{i,
k}$ which are everywhere transverse to the stable cones, i.e.,
for all $q\in K_{i,k}$, $\TT_q K_{i,q}$ contains a
$d_u$-dimensional subspace that intersects $\CC_i^{(s)}(q)$
only at $0$.
\end{definition}

\begin{remark}[Transversality in the image]\label{forbilliards}
If the cone field is continuous (i.e., it does not really
depend on $i$, as is the case with Sinai billiards), then one
can weaken this requirement, by demanding only that the images
$T(K_{i,q})$ are transverse to the stable cone (see Appendix
\ref{app_general} for details). When the cone fields are not
globally continuous, the stronger requirement in Definition
\ref{transs} is necessary to ensure that $C^1$ functions belong
to the Banach space $\HHH$ we shall construct below (see the
argument after Definition ~\ref{defnorm}).
\end{remark}

To estimate dynamical complexity, we define the
$n$-complexities at the beginning and at the end:
  \begin{equation}
  \label{cplx}
  D^b_n=\max_{q\in X_0} \Card \{ \ii \in I^n \st q \in
  \overline{O_{\ii}} \} \, , \quad
  D^e_n=\max_{q\in X_0} \Card \{ \ii \in I^n \st q \in
  \overline{T^n(O_{\ii})} \} \, .
  \end{equation}
(For a globally invertible map $T$ we have $D^e_n(T,\{ O_i,
i\})=D^b_n(T^{-1}, \{T(O_i), i\})$. For $T(x)=2x$ mod $1$ on
$[0,1]$ we have $D^e_n=2^n$, but fortunately this quantity
plays no role for the transfer operator associated to $g=|\det
DT|^{-1}$ when $d_s=0$, up to taking $p$ close enough to $1$ in
Theorem~\ref{MainTheorem}.)

Our main result can now be stated (all Jacobians in this paper
are relative to Lebesgue measure, and  $|\det DT|$ denotes the
Jacobian of $T$):

\begin{theorem}[Spectral theorem]\label{MainTheorem}
Let  $\alpha \in (0,1]$, and let $T$ be a piecewise
$C^{1+\alpha}$ cone hyperbolic map satisfying the
transversality condition. Assume in addition the following {\it
bunching\footnote{Condition
\eqref{bunch} always holds if $d_u=1$.} condition:} For some $n>0$,
  \begin{equation}
  \label{bunch}
  \sup_{\ii \in I^n,\, q\in \overline O_\ii} \frac{
  \lambda^{(n)}_{\ii,s}(q)^\alpha \Lambda^{(n)}_{\ii,u}(q)}{\lambda^{(n)}_{\ii,u}(q)  }
  <1\, .
  \end{equation}
Let $\beta \in (0,\alpha)$ be small enough so that
   \begin{equation}
  \label{bunch2}
  \sup_{\ii \in I^n,\, q\in \overline O_\ii} \frac{
  \lambda^{(n)}_{\ii,s}(q)^{\alpha-\beta} \Lambda^{(n)}_{\ii,u}(q)^{1+\beta} }{\lambda^{(n)}_{\ii,u}(q)  }  <1
  \, .
\end{equation}
Let $1<p<\infty$ and let $t, s \in \real$ be so that
\begin{equation}\label{pstcond}
1/p-1<s<0<t<1/p\, ,
\quad -\beta<t-|s| <0\, , \quad \alpha t+|s| < \alpha\, .
\end{equation}
Then there exists a space $\HHH=\HHH(p,t,s)$ of distributions
on $X$, containing $C^1$ and in which $L^\infty\cap \HHH$ is
dense, and such that for any function $g:X_0\to \complex$  so
that the restriction of $g$ to each $O_i$ admits a $C^{\regg}$
extension to $\overline{O_i}$ for some $\regg>t+|s|$,  the
operator $\LL_g$ defined on $L^\infty$ by
\[
(\LL_g \omega)(q)=\sum_{T(q')=q}
g(q')\omega(q')
\]
extends continuously to $\HHH$. Moreover, its essential
spectral radius on $\HHH$ is at most
  \begin{equation}\label{thebest}
  \lim_{n\to\infty}  (D_{n}^b)^{1/(pn)} \cdot (D_{n}^e)^{(1/n)(1-1/p)}
  \cdot \norm{g^{(n)}|\det DT^{n}|^{1/p}
  \max(\lambda_{u,n}^{-t},\lambda_{s,n}^{-(t-|s|)})}{L^\infty}^{1/n} \, ,
  \end{equation}
where we set $g^{(n)}(q)=  \prod_{k=0}^{n-1} g(T^k(q))$, for $n
\ge 1$.
\end{theorem}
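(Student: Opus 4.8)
The plan is to run the by-now-standard Lasota--Yorke/Hennion scheme for bounding the essential spectral radius, the single novelty being that the strong norm is a supremum, over \emph{all} cone-admissible charts, of a local Triebel norm. First I would invoke the reduction of Subsection~\ref{reducr} to replace Theorem~\ref{MainTheorem} by the more convenient Theorem~\ref{MainTheorembis}: the bunching conditions \eqref{bunch}--\eqref{bunch2} are then assumed for a single fixed $n$, the exponents $p,t,s$ obey \eqref{pstcond}, and the space $\HHH$ is assembled from the building blocks $\HHH_p^{t,s}(R,C_0,C_1)$ for a suitable choice of the zoom factor $R$ and the cone parameters $C_0,C_1$. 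That $C^1\subset\HHH$, that $L^\infty\cap\HHH$ is dense in $\HHH$, and that $\LL_g$ is bounded on each $\HHH_p^{t,s}(R,C_0,C_1)$ all come from the construction of Section~\ref{22} together with the local ``Leibniz'', ``chain-rule'' and multiplier estimates of Section~\ref{sec:local}; in particular the transversality hypothesis (Definition~\ref{transs}) is exactly what makes the characteristic functions $\mathbf{1}_{O_i}$ bounded multipliers on these spaces (Lemma~\ref{lem:multiplier}), hence what lets $\LL_g$ act on $\HHH$ at all.

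The heart of the matter is a Lasota--Yorke inequality: for every $N\ge 1$,
\begin{equation*}
\norm{\LL_g^N\omega}{\HHH}\le C\,\Theta_N\,\norm{\omega}{\HHH}+C_N\,\norm{\omega}{\HHH^{\mathrm{weak}}},
\end{equation*}
where $\HHH^{\mathrm{weak}}$ is a space with strictly smaller Triebel indices into which $\HHH$ embeds compactly (Lemma~\ref{embed}), $C$ is a fixed constant, $C_N$ may depend on $N$, and
\begin{equation*}
\Theta_N\coloneqq(D_N^b)^{1/p}\,(D_N^e)^{1-1/p}\,\norm{g^{(N)}\,|\det DT^N|^{1/p}\,\max\bigl(\lambda_{u,N}^{-t},\lambda_{s,N}^{-(t-|s|)}\bigr)}{L^\infty},
\end{equation*}
so that $\Theta_N^{1/N}$ is exactly the $N$-th term inside the limit in \eqref{thebest}. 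To prove this I would fix a cone-admissible chart $\FF$ on the target side and split according to the itinerary, $\LL_g^N\omega=\sum_{\ii\in I^N}\LL_{g,\ii}^N\omega$ with $\LL_{g,\ii}^N\omega=\bigl(g^{(N)}\,\omega\,\mathbf{1}_{O_\ii}\bigr)\circ(T_\ii^N|_{O_\ii})^{-1}$, supported in $T^N(O_\ii)$. On each branch one handles multiplication by $\mathbf{1}_{O_\ii}$ via Lemma~\ref{lem:multiplier} (this is where transversality is used), multiplication by $g^{(N)}$ via the Leibniz estimate (Lemma~\ref{Leib}), and composition with the diffeomorphism $(T_\ii^N)^{-1}$ via the chain-rule estimates (Lemmas~\ref{CompositionDure} and \ref{lemcomposeD1alpha}); together with the exponent constraints \eqref{pstcond} these produce the pointwise weight $g^{(N)}|\det DT^N|^{1/p}\max(\lambda_{u,N}^{-t},\lambda_{s,N}^{-(t-|s|)})$ and, crucially, leave the norm of $\omega$ measured \emph{in the pulled-back chart} $(T_\ii^N)^*\FF$. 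Here the invariance of the admissible class enters: by Lemma~\ref{lemcompose}, for the chosen $C_0,C_1$ the chart $(T_\ii^N)^*\FF$ is again cone-admissible, so the Triebel norm of $\omega$ in it is $\le\norm{\omega}{\HHH}$. Finally, to get the complexity factors $(D_N^b)^{1/p}$ and $(D_N^e)^{1-1/p}$ rather than a single exponential count of itineraries, I would sum over $\ii$ by an $L^p$--H\"older argument: at each target point at most $D_N^e$ branches are nonzero, so $\bigl|\sum_\ii\LL_{g,\ii}^N\omega\bigr|^p\le(D_N^e)^{p-1}\sum_\ii\bigl|\LL_{g,\ii}^N\omega\bigr|^p$, after which the change of variables $T_\ii^N$ and the bound $\sum_\ii\mathbf{1}_{O_\ii}\le D_N^b$ supply the last factor; the same bookkeeping is done at the level of the Littlewood--Paley decomposition defining the Triebel norm. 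Subprincipal terms — commutators with the partition of unity and with the zoom, and the lower-order parts of the chain-rule and Leibniz estimates — are all genuinely lower order and go into $C_N\norm{\omega}{\HHH^{\mathrm{weak}}}$.

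Granting this inequality, Hennion's theorem (via Nussbaum's formula, using that the unit ball of $\HHH$ is precompact in the $\HHH^{\mathrm{weak}}$ topology by Lemma~\ref{embed}) yields $\ress(\LL_g\st\HHH)\le\limsup_{N\to\infty}\Theta_N^{1/N}$; since $\Theta_N$ is sub-multiplicative in $N$, Fekete's lemma shows this $\limsup$ is a genuine limit, which equals the right-hand side of \eqref{thebest}. This completes the proof modulo the local estimates and the invariance lemma.

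The one real obstacle is Lemma~\ref{lemcompose}, the invariance of the family of cone-admissible charts under the inverse branches of $T^N$. Morally this is a Hadamard--Perron graph transform (see \eqref{heart}--\eqref{heart''}), but it has to be carried out precisely enough to \emph{reverse-engineer} the quantitative content of Definition~\ref{deffol}: one must verify not merely that the transported chart is again $C^1$ and adapted to the cones, but that the parameters $C_0,C_1$ governing the modulus of continuity of its differential do not deteriorate under iteration — and it is exactly at this step that the bunching conditions \eqref{bunch}--\eqref{bunch2}, i.e.\ that $\lambda_s^{\alpha}\Lambda_u/\lambda_u<1$ (with the extra margin $\beta$ for \eqref{bunch2}), are needed to close the estimate. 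Everything else — the anisotropic multiplier and chain-rule estimates, the compact embedding, and the chart/zoom/partition-of-unity bookkeeping of Section~\ref{mainsec} — is laborious but proceeds essentially as in \cite{baladi_gouezel_piecewise}.
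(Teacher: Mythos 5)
Your overall architecture (Lasota--Yorke estimate $+$ compact embedding $+$ Hennion/Nussbaum, with Lemma~\ref{lemcompose} as the one genuinely new ingredient) is the same as the paper's, and your identification of where transversality, the multiplier lemma and the bunching condition enter is accurate. But there is a genuine gap at the pivot of your argument: the claimed Lasota--Yorke inequality $\norm{\LL_g^N\omega}{\HHH}\le C\,\Theta_N\norm{\omega}{\HHH}+C_N\norm{\omega}{\HHH^{\mathrm{weak}}}$, with a \emph{single} space $\HHH$, a constant $C$ independent of $N$, and validity for \emph{every} $N$, is not obtainable by these methods. Two obstructions: (i) the multiplier bound for $1_{T^N_\ii O_\ii}$ is not uniform in $N$ --- the boundary of such a set consists of up to $LN$ hypersurfaces, and even the factor $\Cs N$ of Lemma~\ref{lem:multiplier} is only achieved after choosing the chart parameter $C_1$ large \emph{depending on $N$} (so that admissible charts are close enough to affine that a line meets each hypersurface once); (ii) the invariance of the admissible class under $(T^N_\ii)^{-1}$ via Lemma~\ref{lemcompose} requires the hyperbolicity/bunching condition \eqref{suffhyp} with $\epsilon=\epsilon(C_0,C_1)$, i.e.\ the iteration time must be large depending on $C_1$, and the zoom $R$ must then be large depending on that time. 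So the defining parameters of the space are intertwined with the time at which the estimate holds: on any fixed $\HHH_p^{t,s}(R,C_0,C_1)$ one only gets the estimate at one fixed time $n$, with the extra factor $(\Cs N^p)^{n/N}$ coming from cutting the itinerary into blocks of length $N$ --- this is exactly the form \eqref{thebest2} of Theorem~\ref{MainTheorembis} and Lemma~\ref{maintechlemma}, not your uniform inequality. If your uniform inequality were available, the paper's two-time-scale statement and the space-changing machinery would be superfluous.

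Consequently the passage from a fixed-time bound to the limit in \eqref{thebest} is not just Fekete plus submultiplicativity of $\Theta_N$, and your appeal to ``invoking the reduction of Subsection~\ref{reducr}'' does not close the hole: that reduction consumes as inputs both Theorem~\ref{MainTheorembis} in its precise form and Proposition~\ref{propinvar} (boundedness of $\LL_g^N$ \emph{between} spaces with different $R$ and $C_1$, which rests on Remark~\ref{remOKgrand}), and then runs a nontrivial argument: build the $\LL_g$-invariant space via \eqref{normeHnR} (each $\HHH_p^{t,s}(R,C_0,C_1)$ is not itself $\LL_g$-invariant, only $\LL_g^n$-bounded), and, for the limit, transit through an auxiliary space $\HHH_p^{t,s}(R',C_0,C_1')$ with better parameters, use the spectral bound there, and come back, estimating essential spectral radii through Hennion's criterion with compact operators conjugated by $\LL_g^{mn}$. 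Your proposal supplies none of this (neither Proposition~\ref{propinvar} nor the invariant-space construction), so as written it proves neither Theorem~\ref{MainTheorembis} (wrong form of the inequality, no block factor) nor Theorem~\ref{MainTheorem} directly (the uniform inequality it relies on fails). The rest --- Lemma~\ref{lem:sum} in place of your pointwise H\"older bound for the branch sum, and the origin of the factor $D_N^b$ --- is essentially right.
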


Our proof does not give good bounds on the spectral radius of
$\LL_g$ on $\HHH$. However, if $g=|\det DT|^{-1}$ and the bound
in \eqref{thebest} is $<1$, then Theorem 33 in
\cite{baladi_gouezel_piecewise} implies that the spectral
radius is equal to $1$, and that $T$ has finitely many physical
measures, attracting Lebesgue almost every point of the
manifold. For details,
we refer the reader to Appendix ~\ref{app_physical},
where we also explain how to iterate the map in the other direction of
time to get different conditions under which this conclusion
holds. (These conditions are satisfied whenever $d_s = 1$, they
apply for instance to any Lozi map with a compact invariant
domain $X_0$, see Corollary ~ \ref{goodforLozi}.)

The limit in \eqref{thebest} exists by submultiplicativity. We
can bound $\lambda_{s,n}$ and $\lambda_{u,n}^{-1}$ by
$\lambda^{-n}$, where $\lambda>1$ is the weakest rate of
contraction/expansion of $T$. Therefore, if $g= |\det DT|^{-1}$
then the essential spectral radius is strictly smaller than $1$
if there exist $s,t$ and $p$ as in Theorem ~ \ref{MainTheorem}
with
\begin{equation*}
  \lim_{n\to\infty}  (D_{n}^b)^{1/(pn)} \cdot (D_{n}^e)^{(1/n)(1-1/p)}
  \cdot \norm{|\det DT^{n}|^{1/p-1}}{L^\infty}^{1/n} < \lambda^{\min (t, -(t-|s|))}\, .
  \end{equation*}
In particular, if $g= |\det DT|^{-1}\equiv 1$, then the
essential spectral radius is strictly smaller than $1$ if
$\lim_{n\to\infty}  (D_{n}^b)^{1/(pn)} (D_{n}^e)^{(1/n)(1-1/p)}
< \lambda^{\min (t, -(t-|s|))} $, that is, if hyperbolicity
dominates complexity.

Subsections~\ref{sec:def_fol} and~\ref {spaces} are devoted to
the definition of spaces $\HHH^{t,s}_p(R, C_0, C_1)$ which will
give the space $\HHH$ of Theorem \ref{MainTheorem}  via
Proposition~ \ref{propinvar} (see (\ref{normeHnR})). Let us now
describe briefly this space $\HHH$, which generalizes the
spaces of \cite{baladi_Cinfty,baladi_gouezel_piecewise}.
Intuitively, an element of $\HHH$ is a distribution which has
$t$ derivatives in $L^p$ in all directions together with $s$
derivatives in $L^p$ in the stable direction. This amounts to
$s+t$ derivatives in $L^p$ in the stable direction, and $t$
derivatives in the transverse ``unstable'' direction. Since
$t>0$ and $t+s=t-|s|<0$, the transfer operator increases
regularity in this space. The restriction $1/p-1<s<0<t<1/p$ is
designed so that this space is stable under multiplication by
characteristic functions of nice sets (see \cite[Lemma~
23]{baladi_gouezel_piecewise}) --- this makes it possible to
deal with discontinuous maps. If one assumes that there exists
a $C^1$ stable direction, the above rough description can be
made precise, using anisotropic Sobolev spaces: This was done
in \cite{baladi_gouezel_piecewise}\footnote{The local spaces in
Definition~\ref{space} are the same as those in
\cite{baladi_gouezel_piecewise}.}. In our setting, there is in
general not even a continuous stable direction, so we shall
instead use a {\it class} of local foliations (with uniformly
bounded $C^{1+\beta}$ norms) compatible with the stable cones,
and define our norm as the supremum of the anisotropic Sobolev
norms over all local foliations in this class (Definition~
\ref{defnorm}). To ensure that the space so defined is
invariant under the action of the transfer operator, one should
make sure that the preimage under iterates of $T$ of a
foliation in our class remains in our class: This is the
content of our key Lemma ~\ref{lemcompose}. Since we want those
foliations to have bounded $C^1$ norm (otherwise, the argument
for anisotropic Sobolev norms  fails), we need the bunching
condition \eqref{bunch} to prove this invariance. (In the
smooth, i.e., Axiom A case, \eqref{bunch} would ensure that the
stable foliation is $C^1$ --- see e.g. \cite[\S 19.1]{katok} in
the case $\alpha=1$ --- and the strengthening \eqref{bunch2}
would even ensure that the stable foliation is $C^{1+\beta}$.
In the general piecewise smooth case, the foliation is only
measurable, even if \eqref{bunch} holds.)

\subsection{Anisotropic spaces \texorpdfstring{$H_p^{t,s}$}{Hpts} in
\texorpdfstring{$\real^d$}{Rd} and the class
\texorpdfstring{$\FF(z_0,\CC^s, C_0, C_1)$}{F(z0,Cs, C0, C1)}
of local foliations}
\label{sec:def_fol}

In this subsection, we  recall the anisotropic  spaces
$H^{t,s}_p$  in $\real^d$ (which were used in
\cite{baladi_gouezel_piecewise}), and we define a class $\FF$
of cone-admissible local foliations in $\real^d$  with
uniformly bounded $C^ {1+\beta}$ norms (in Lemma ~\ref{lemcompose} we
shall show that this class is invariant under iterations of the
dynamics). These are the two building blocks that we shall use
in Section ~ \ref{spaces} to define our spaces of
distributions.

We write $z\in \real^d$ as $z=(x,y)$ where
$x=(z_1,\dots,z_{d_u})$ and $y=(z_{d_u+1},\dots, z_d)$. The
subspaces $\{x \} \times \real^{d_s}$ of $\real^d$ will  be
referred to as the {\it stable leaves} in $\real^d$. We say
that a diffeomorphism of $\real^ d$ {\it preserves stable
leaves} if its derivative has this property. For $r>0$ and
$z=(x,y)\in \R^d$, let us write $B(x,r)=\{ x'\in \R^{d_u} \st
|x'-x|\leq r\}$, $B(y,r)=\{ y'\in \R^{d_s} \st |y'-y|\leq r\}$
and $B(z,r)=B(x,r)\times B(y,r)$. We denote the Fourier
transform in $\real^d$ by $\FFF$. An element of the dual space
of $\real^d$ will be written as $(\xi,\eta)$ with $\xi \in
\real^{d_u}$ and $\eta\in \real^{d_s}$.

The local anisotropic Sobolev  spaces $H_p^{t,s}$ belong to a
class of spaces first studied by Triebel \cite{triebel_III}:

\begin{definition}[Sobolev spaces $H^{t,s}_p$ and $H^{t}_p$ in $\real^d$]
\label{space} For $1<p<\infty$, and $t$, $s \in \real$,
let $H_p^{t,s}$ be the set of (tempered) distributions $w$ in
$\real^d$ such that
  \begin{equation}\label{norm}
  \norm{w}{H_p^{t,s}}\coloneqq
  \norm{\FFF^{-1}(a_{t,s}\FFF w)}{ L^p} < \infty\, ,
  \end{equation}
where
  \begin{equation}
  a_{t,s}(\xi,\eta)=(1+|\xi|^2+|\eta|^2)^{t/2} (1+|\eta|^2)^{s/2} .
  \end{equation}
For $1 < p < \infty$, $t \in \real$,  the set
$H_p^{t}=H_p^{t,0}$ is the standard (generalized) Sobolev
space.
\end{definition}

Triebel proved that rapidly decaying $C^\infty$ functions are
dense in each $H^{t,s}_p$ (see e.g. ~ \cite[Lemma
18]{baladi_gouezel_piecewise}). In particular, we could
equivalently define $H_p^{t,s}$ to be the closure of rapidly
decaying $C^\infty$ functions for the norm \eqref{norm}.

\medskip

We shall work with local foliations indexed by points $m$ in
appropriate finite subsets of $\real^d$ (defined in
\eqref{fset} below). The following definition of the class of
foliations is the key new ingredient of the present work. We
view $\alpha \in (0,1]$ and $\beta \in (0,\alpha]$ as fixed
(like in the statement of Theorem ~ \ref{MainTheorem}) while
the constants $C_0>1$ and $C_1>2C_0$ will be chosen later.
These constants play the following role: if $C_0$ is large,
then the admissible foliation covers a large domain; if $C_1$
is large, then the leaves of the foliation are almost parallel.

\begin{definition}[Sets $\FF(m,\CC^s, C_0, C_1)$ of cone-admissible foliations at $m\in \R^d$]
\label{deffol}
Let $\CC^s$ be a $d_s$-dimensional cone in $\R^d$, transverse
to $\R^{d_u}\times \{0\}$, let $m=(x_m,y_m)\in \R^d$, and let
$1<C_0<C_1/2$. The set $\FF(m,\CC^s,C_0,C_1)$ of
$\CC^s$-admissible local foliations at $m$ is the set of maps
  \begin{equation*}
  \phi=\phi_F: B(m,C_0)\to \R^d
  \, ,
  \quad \phi_F(x,y)=(F(x, y), y)
  \, ,
  \end{equation*}
where  $F:B(m,C_0) \to \R^{d_u}$ is $C^1$ and satisfies
  \begin{equation*}
  (\partial_y F(z)w,w)\in \CC^s\, ,  \forall w\in\R^{d_s}\, ,
  \forall z\in B(m, C_0)\, ;
  \quad
  F(x,y_m)= x\, ,  \, \forall x\in B(x_m, C_0)\, ,
  \end{equation*}
and, for all $(x,y)$ and $(x',y')$ in $B(m,C_0)$,
  \begin{align}
  \label{aalpha}
  | DF(x,y) -DF(x,y') | &\leq |y-y'|^\alpha/C_1\, ,\\
  |DF(x,y)- DF(x',y)|&\leq  |x-x'|^\beta /C_1\, ,
  \label{beeta}
  \end{align}
and
  \begin{equation}
  | D F(x,y)- DF (x,y')-DF(x',y)+DF(x',y') | \leq |x-x'|^\beta |y-y'|^{\alpha-\beta}/C_1  \, .
  \label{beeta'}
  \end{equation}
\end{definition}

The set $\FF(m,\CC^s,C_0,C_1)$ is large, as we explain next: If
the cone $\CC^s$ is $d_s$-dimensional and transverse to
$\real^{d_u}$, then it contains a $d_s$-dimensional vector
subspace $E$ which is transverse to $\real^{d_u}$. Therefore,
there exists a (possibly zero) linear map $\mathbb E :
\real^{d_s}\to \real^{d_u}$ so that $E=\{ (\mathbb E w, w) \, ,
w \in \real^{d_s} \}$. It follows that the affine map
$F_{\mathbb E}(x,y)=x+\mathbb E (y-y_m)$ is such that
$\phi_{F_{\mathbb E}}\in \FF(m,\CC^s, C_0, C_1)$. Then, it is
easy to see that if $F$ is $C^{1+\alpha}$, with
$F(x,y_m)=F_{\mathbb E}(x,y_m)=x$, and $F$ is close enough to
$F_{\mathbb E}$,  then $\phi_F \in \FF(m,\CC^s,C_0,C_1)$. (To
check \eqref{beeta'}, consider separately the cases $|x-x'|\le
|y-y'|$ and $|x-x'|>|y-y'|$.)

We now collect easy but important consequences of the above
definition. (See also the remarks at the end of this subsection
about the technical conditions \eqref{aalpha}--\eqref{beeta'}.)
We shall see in Lemma \ref{lempropphi} that the  graphs
$\{(F(x,y),y)\mid |y-y_m|<C_0\}$ for $|x-x_m|<C_0$ form a
partition of a neighborhood of $m$ of size proportional to
$C_0$ (through the $R$-zoomed charts to be introduced in
Section~\ref{spaces}, this will correspond to a neighborhood of
size of the order of $C_0/R$ in the manifold), and their
tangent space is everywhere contained in $\CC^s$. The map $F$
thus defines a local foliation (justifying the terminology),
and the map $\phi_F$ is a diffeomorphism straightening this
foliation, i.e., the leaves of the foliation are the images of
the stable leaves of $\R^d$ under the map $\phi_F$. (The maps
$y \mapsto (F(x,y),y)$ for fixed $x$ are sometimes called  {\it
plaques,} while $x \mapsto F(x,y)$ for $y$ fixed is the {\it
holonomy} between the transversals of respective heights $y_m$
and $y$.) Moreover, if $C_1$ is very large, then $DF$ is close
to constant, i.e., $\phi_F$ is very close to an affine map. The
conditions in the definition up to \eqref{aalpha} imply that
the local foliation defined by $F$ is $C^{1+\alpha}$ along the
leaves. Moreover, the next lemma shows that these conditions
imply uniform bounds on $F$ (independent of $C_0$).

\begin{lemma}[Admissible foliations are $C^{1+\beta}$ foliations]
\label{lempropphi}
For any $d_s$-dimensional cone $\CC^s$ transverse to
$\R^{d_u}\times \{0\}$, there exists a constant $\Cs$ depending
only on $\CC^s$ such that, for any $1<C_0<C_1/2$, and any
$\phi_F \in \FF(m,\CC^s, C_0, C_1)$, the map $\phi_F$ is a
diffeomorphism onto its image with
$\norm{D\phi_F}{C^{\beta}}\leq \Cs$ and
$\norm{D\phi_F^{-1}}{C^{\beta}} \leq \Cs$. Moreover, $\phi_F(
B(m,C_0))$ contains $B(m,\Cs^{-1} C_0)$.
\end{lemma}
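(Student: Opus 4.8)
The plan is to derive everything from three ingredients: a uniform ``aperture'' bound for the cone $\CC^s$, the fact that \eqref{aalpha} together with the normalization $F(\cdot,y_m)=\id$ forces $\partial_x F$ to be close to the identity, and the Banach-algebra properties of $C^\beta$. All constants below will be seen to depend only on $\CC^s$.

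First I would show that there is $K=K(\CC^s)>0$ with $|v|\le K|w|$ for every $(v,w)\in\CC^s$ (writing $v\in\R^{d_u}$, $w\in\R^{d_s}$). This is a compactness argument: if it failed there would be $(v_n,w_n)\in\CC^s$ with $|v_n|=1$, $|w_n|\to0$, and, along a subsequence, a limit $(v_*,0)\in\CC^s$ with $|v_*|=1$; but transversality of $\CC^s$ to $\R^{d_u}\times\{0\}$ means exactly that $\CC^s\cap(\R^{d_u}\times\{0\})=\{0\}$, a contradiction. Applying this to $(v,w)=(\partial_y F(z)w,w)\in\CC^s$ gives $\nor{\partial_y F(z)}\le K$ for all $z\in B(m,C_0)$. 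Next, differentiating $F(\cdot,y_m)=\id$ in $x$ gives $\partial_x F(x,y_m)=\Id$, so \eqref{aalpha} yields $|\partial_x F(x,y)-\Id|\le|DF(x,y)-DF(x,y_m)|\le|y-y_m|^\alpha/C_1\le C_0^\alpha/C_1<1/2$ on $B(m,C_0)$, using $1<C_0$, $C_1>2C_0$ and $\alpha\le1$. Hence $\partial_x F$ is invertible everywhere with $\nor{\partial_x F}\le3/2$, $\nor{(\partial_x F)^{-1}}\le2$. Since $D\phi_F$ is block-triangular, with diagonal blocks $\partial_x F$ and $\Id_{d_s}$, off-diagonal block $\partial_y F$, and vanishing lower-left block, it is invertible everywhere and both $\nor{D\phi_F}$ and $\nor{(D\phi_F)^{-1}}$ are bounded in terms of $K$.

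For the $C^\beta$ bounds I would combine \eqref{aalpha} and \eqref{beeta} via an intermediate point: for $z=(x,y)$ and $z'=(x',y')$ in $B(m,C_0)$, $|DF(z)-DF(z')|\le|y-y'|^\alpha/C_1+|x-x'|^\beta/C_1$. When $|z-z'|<1$ both $|x-x'|$ and $|y-y'|$ are $<1$, and $\beta\le\alpha$, so the right-hand side is $\le2|z-z'|^\beta$; when $|z-z'|\ge1$ the left-hand side is $\le2\norm{DF}{\infty}\le(3+2K)|z-z'|^\beta$. Hence $\norm{DF}{C^\beta}$, and with it $\norm{D\phi_F}{C^\beta}$, is bounded in terms of $\CC^s$ (note that \eqref{beeta'} is not needed here). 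For $\phi_F^{-1}$: the map $\phi_F$ preserves the $y$-coordinate, and for fixed $y$ the map $x\mapsto F(x,y)$ is injective, since $\int_0^1\partial_x F(x_2+\theta(x_1-x_2),y)\dd\theta$ stays within $1/2$ of $\Id$; so $\phi_F$ is injective, hence a $C^{1+\beta}$ diffeomorphism onto the open set $\phi_F(B(m,C_0))$. Moreover $\phi_F^{-1}$ is Lipschitz with constant depending only on $\CC^s$: for $u,v$ in the convex set $B(m,C_0)$ one has $\phi_F(v)-\phi_F(u)=N(v-u)$ with $N=\int_0^1 D\phi_F(u+\theta(v-u))\dd\theta$ again block-triangular and invertible with uniformly bounded inverse. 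Then $D(\phi_F^{-1})=(D\phi_F)^{-1}\circ\phi_F^{-1}$, where $(D\phi_F)^{-1}$ is a $C^\beta$ function of $z$ with norm bounded in terms of $\CC^s$ --- this follows from its explicit block form together with the facts that $(\partial_x F)^{-1}$ is $C^\beta$, with $[(\partial_x F)^{-1}]_{C^\beta}\le\nor{(\partial_x F)^{-1}}^2\,[\partial_x F]_{C^\beta}$, and that $C^\beta$ is a Banach algebra. Precomposition with the uniformly Lipschitz map $\phi_F^{-1}$ preserves this bound because $\beta\le1$, so $\norm{D\phi_F^{-1}}{C^\beta}\le\Cs$.

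Finally, to prove $\phi_F(B(m,C_0))\supset B(m,\Cs^{-1}C_0)$, note first $\phi_F(m)=m$ (since $F(x_m,y_m)=x_m$). Given $(x^*,y^*)$ with $|x^*-x_m|,|y^*-y_m|\le\delta$, solving $\phi_F(x,y)=(x^*,y^*)$ reduces to $y=y^*$ and $F(x,y^*)=x^*$, i.e.\ to a fixed point of $g(x)=x-F(x,y^*)+x^*$ on $B(x_m,C_0)$. Here $Dg=\Id-\partial_x F(\cdot,y^*)$ has norm $\le1/2$, and $|g(x_m)-x_m|\le\norm{\partial_y F}{\infty}\,|y^*-y_m|+|x^*-x_m|\le(K+1)\delta$, so $g$ maps $B(x_m,C_0)$ into itself once $\delta\le C_0/(2(K+1))$; the contraction mapping theorem then provides the preimage. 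Taking $\Cs$ larger than all the constants above, in particular $\Cs\ge2(K+1)$, completes the proof. The argument has no genuine difficulty; the only point requiring care is that \emph{every} constant stay independent of $C_0$ and $C_1$, which is why one should extract the $L^\infty$ bounds first (from the normalization $F(\cdot,y_m)=\id$ and the cone condition, rather than from \eqref{aalpha}--\eqref{beeta'} directly) and then split the Hölder estimate into a small-scale and a large-scale regime.
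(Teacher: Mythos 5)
Your proof is correct, and its first half coincides with the paper's: the uniform bound on $\partial_y F$ from transversality of $\CC^s$ (which the paper states without the compactness argument you spell out), the estimate $|\partial_x F-\Id|\le C_0^\alpha/C_1<1/2$ from the normalization $F(\cdot,y_m)=\id$ together with \eqref{aalpha}, and the $C^\beta$ bound on $DF$ obtained by combining \eqref{aalpha} and \eqref{beeta} through an intermediate point (both arguments correctly observe that \eqref{beeta'} is not needed). Where you genuinely diverge is in the second half. The paper deduces the bi-Lipschitz property and the image statement by showing $\langle F(x,y)-F(x',y),x-x'\rangle\ge|x-x'|^2/2$, concluding that $\phi_F$ lies in the class $\DD(\Cs)$ of Subsection~\ref{subsectDD} via Lemma~\ref{belongsDD}, and then invoking Lemma~\ref{DDbigimage} (an open--closed connectedness argument) to get $\phi_F(B(m,C_0))\supset B(m,\Cs^{-1}C_0)$. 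You instead argue directly: the averaged-derivative identity $\phi_F(v)-\phi_F(u)=N(v-u)$ with $N$ block-triangular and uniformly invertible gives injectivity and the Lipschitz bound on $\phi_F^{-1}$, and a contraction-mapping fixed point for $g(x)=x-F(x,y^*)+x^*$ produces the preimage of any point of $B(m,\delta)$ with $\delta=C_0/(2(K+1))$. Your route is more self-contained and even yields an explicit constant for the image inclusion, at the price of redoing by hand what the appendix lemmas package once and for all; the paper's choice pays off later, since the classes $\DD$ and the lemmas \ref{belongsDD}--\ref{DDbigimage} are reused repeatedly in the proof of Lemma~\ref{lemcompose}, where the quadratic-form lower bound (rather than invertibility of an averaged matrix) is the form of the estimate that propagates through the constructions there.
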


The proof of these claims does not require \eqref{beeta'}.

\begin{proof} Let $\phi=\phi_F \in \FF(m,\CC^s, C_0, C_1)$.
We first check that $\norm{DF}{C^0}\le \Cs$. Observe first that
$\partial_y F$ is bounded since the cone $\CC^s$ is transverse
to $\R^{d_u}\times\{0\}$. Since $F(x,y_m)=x$, we have
$\partial_x F(x,y_m)=\id$, hence \eqref{aalpha} gives
  \begin{equation}
  \label{eqpartialxF}
  |\partial_x F(x,y)-\id|=|\partial_x F(x,y) -\partial_x F(x,y_m)|
  \leq |y-y_m|^\alpha/C_1
  \leq C_0^{\alpha}/C_1 < 1/2\, .
  \end{equation}
In particular, $|\partial_x F|$ is uniformly bounded. This
shows that $\norm{DF}{C^0}\le \Cs$. We next observe that
condition \eqref{beeta} together with \eqref{aalpha} imply that
$DF$ is $\beta$-H\"{o}lder: There exists a constant $\Cs$
(independent of $C_0$) such that, for all pairs $(x,y)$ and
$(x',y')$ in $B(m,C_0)$,
  \begin{equation}\label{willuse'}
  |DF(x,y)-DF(x',y')|\le \Cs d((x,y), (x',y'))^\beta \, .
  \end{equation}
Indeed, \eqref{aalpha} gives $|DF(x,y)-DF(x,y')|\leq
|y-y'|^\alpha$, and \eqref{beeta} gives
$|DF(x,y')-DF(x',y')|\leq |x-x'|^\beta$. Since $\beta \le
\alpha$, \eqref{willuse'} follows. We have shown that
$\norm{D\phi}{C^{\beta}}\leq \Cs$.

For any vector $v$, \eqref{eqpartialxF} shows that $\langle
\partial_x F v,v\rangle \geq |v|^2/2$. Integrating this
inequality on the segment between $x$ and $x'$, for $v=x'-x$,
we get $\langle F(x,y)-F(x',y), x-x'\rangle \geq |x-x'|^2/2$.
In particular,
  \begin{equation}
  |F(x,y)-F(x',y)|\geq |x-x'|/2\, .
  \end{equation}

By Lemma \ref{belongsDD}, this implies that the map $\phi$
belongs to the class $\DD(\Cs)$ defined in Subsection~
\ref{subsectDD}, for some $\Cs>0$ independent of $C_0, C_1$. In
particular, $\phi$ is a diffeomorphism onto its image, and
$|D\phi^{-1}|\leq \Cs$. Since $D\phi$ is $\beta$-H\"{o}lder, it
follows that $D\phi^{-1}$ is also $\beta$-H\"{o}lder, and
$\norm{D\phi^{-1}}{C^{\beta}}\leq \Cs$.

Finally, Lemma ~\ref{DDbigimage} shows that $\phi( B(m,C_0))$
contains $B(\phi(m),\Cs^{-1} C_0)$.
\end{proof}

We end this subsection with the promised remarks on the
conditions in Definition ~ \ref{deffol}  involving $\alpha$ and
$\beta$.

\begin{remark}[Condition \eqref{aalpha}]
Condition \eqref{aalpha} is  used in the proof of Lemma~
\ref{lempropphi} to ensure that $|DF|$ is uniformly bounded. It
would seem more natural to replace \eqref{aalpha} by the weaker
condition $|DF|\leq C$. However, it turns out that this weaker
condition is never invariant under the graph transform, while
\eqref{aalpha} is invariant if \eqref{bunch} is satisfied (see
\eqref{heart}). If $T$ is piecewise $C^2$ one can take
$\alpha=1$, and this is what is usually done in the literature
(\cite[\S 19]{katok}, \cite[App.~A]{liverani_contact}). In
addition, because of the extra $C^{1+\alpha}$ smoothness in the
$y$-direction given by \eqref{aalpha}, Lemma ~ \ref{lemcompose}
produces diffeomorphisms $\Psi$ and $\Psi_m$ which belong to
the space $D^1_{1+\alpha}$ from Definition \ref{3.1}. This is
useful in view of the composition Lemma ~
\ref{lemcomposeD1alpha}.
\end{remark}

\begin{remark}[Conditions \eqref{beeta} and \eqref{beeta'}: H\"{o}lder Jacobian]
Lemma ~ \ref{embed} about compact embeddings requires the
foliations $\phi_F$ and their inverses $\phi_F^{-1}$ to have
$C^\beta$ Jacobians for some $\beta >0$. (Beware that, even if
$T$ is volume-preserving, the class of foliations satisfying
$|\det D\phi| \equiv 1$ is not invariant under the dynamics,
because of the necessary reparametrizations in the proof of
Lemma ~\ref{lemcompose}.) Lemma ~ \ref{lempropphi} shows that
the conditions \eqref{aalpha} and \eqref{beeta} imply that the
Jacobians $J(x,y)= |\det D \phi_F|(x,y)= |\det
\partial_x F|(x,y)$ and $\tilde J(x,y)=|\det
D\phi_F^{-1}|(x,y)$ are $\beta$-H\"{o}lder (with a $C^\beta$ norm
bounded independently of $C_0$).

Condition \eqref{beeta} will only be used to ensure that $J$
and $\tilde J$ are  $C^\beta$. It turns out that the H\"{o}lder
condition on the Jacobians, by itself, is not preserved when
the foliation is iterated under hyperbolic maps, and neither is
the condition \eqref{beeta} alone. However, the pair
 \eqref{beeta}--\eqref{beeta'} is invariant if
\eqref{bunch2} is satisfied (see in particular Step~ 3 in the
proof of Lemma ~\ref{lemcompose}).
\end{remark}


\subsection{Extended cones, suitable charts and spaces of distributions}
\label{spaces}

In this subsection, we introduce appropriate cones
$\CC^s_{i,j}$ and coordinate patches $\kappa_{i,j}$ on the
manifold in order to glue together (via a partition of unity)
the local spaces $H^{t,s}_p$ and define a space
$\HHH_{p}^{t,s}(R)$ of distributions\footnote{This is a
modification of the space denoted $\tilde \HH^{t,s}_p$ in
\cite{baladi_gouezel_piecewise}.}  by using the  charts in
$\FF(m, \CC^s_{i,j}, C_0, C_1)$.

\begin{definition}\label{extc}
An {\it extended cone} $\CC$ is a set of four cones $(\CC^s,
\CC_0^s, \CC^u, \CC^u_0)$ such that $\CC^s$ and $\CC^u$ are
convexly transverse, $\CC^s_0$ contains $\{0\}\times \R^{d_s}$,
$\CC^u_0$ contains $\R^{d_u}\times \{0\}$ and $\CC^s_0-\{0\}$
is contained in the interior of $\CC^s$, $\CC^u_0-\{0\}$ is
contained in the interior of $\CC^u$. Given two extended cones
$\CC$ and $\tilde\CC$, we say that an invertible matrix
$M:\R^d\to \R^d$ {\it sends $\CC$ to $\tilde\CC$ compactly} if
$M \CC^u$ is contained in $\tilde{\CC}^u_0$, and
$M^{-1}\tilde\CC^s$ is contained in $\CC^s_0$.
\end{definition}

For all $i\in I$, we fix once and for all a finite number of
open sets $U_{i,j,0}$ of $X_0$, for $1\leq j\leq N_i$, covering
$\overline{O_i}$, and included in the fixed neighborhood
$\tilde O_i$ of $\overline{O_i}$ where the extension $T_i$ of
$T_{|O_i}$ is defined. Let also $\kappa_{i,j}:U_{i,j,0} \to
\R^d$, for $i\in I$ and $1\leq j\leq N_i$, be a finite family
of $C^\infty$ charts, and let $\CC_{i,j}$ be extended cones in
$\R^d$ such that, wherever $\kappa_{i',j'}\circ T_i \circ
\kappa_{i,j}^{-1}$ is defined, its differential sends
$\CC_{i,j}$ to $\CC_{i',j'}$ compactly. Such charts and cones
exist, as we explain now. Since the map is hyperbolic and the
image of the unstable cone is included in the unstable cone,
small enlargements of the unstable cones are sent strictly into
themselves by the map. Therefore, if one considers charts with
small enough supports, and locally constant cones
$\CC_{i,j}^s$, $\CC_{i,j}^u$ slightly larger than the cones
$D\kappa_{i,j}(q) \CC_i^{(s)}(q)$,
$D\kappa_{i,j}(q)\CC_i^{(u)}(q)$, and slightly smaller cones
$\CC^s_{i,j,0}, \CC^u_{i,j,0}$, they satisfy the previous
requirements. (Convex transversality in the extended cone
follows from our convex transversality assumption on
$\CC_i^{(s)}$ and $\CC_i^{(u)}$.) We also fix open sets
$U_{i,j,1}$ covering $X_0$ such that
$\overline{U_{i,j,1}}\subset U_{i,j,0}$, and we let
$V_{i,j,k}=\kappa_{i,j}(U_{i,j,k})$, $k=0, 1$.

The spaces of distributions will depend on a large parameter $R
\ge 1$ which will play the part of a ``zoom:'' If $R\geq 1$ and
$W$ is a subset of $\R^d$, denote by $W^R$ the set $\{R\cdot z
\st z\in W\}$. Let also $\kappa_{i,j}^R(q)=R \kappa_{i,j}(q)$,
so that $\kappa_{i,j}^R(U_{i,j,k})=V_{i,j,k}^R$. Let
  \begin{equation}
  \label{fset}
  \ZZ_{i,j}(R)
  =\{ m\in V_{i,j,0}^R \cap \Z^d\mid
  B(m,C_0)\cap V_{i,j,1}^R\ne \emptyset\}\, ,
  \end{equation}
and
  \begin{equation}
  \label{fset2}
  \ZZ(R)=\{ (i,j,m) \st i\in I, 1\leq j\leq N_i, m\in \ZZ_{i,j}(R)\}\, .
  \end{equation}

To $\bm=(i,j,m)\in \ZZ(R)$ is  associated the point
$q_\bm\coloneqq(\kappa_{i,j}^R)^{-1}(m)$ of $X$. These are the
points around which we shall construct local foliations, as
follows. Let us first introduce useful notations: We write
  \begin{equation*}
  O_\bm=O_i \, , \quad
  \kappa^R_\bm=\kappa^R_{i,j}
  \text{ and }
  \CC_\bm=\CC_{i,j}\quad \text{ for } \zeta =(i,j,m) \in \ZZ(R)
  \, .
  \end{equation*}
These are respectively the partition set, the chart and the
extended cone that we use around $q_\bm$. Let us fix some
constants $C_0>1$ and $C_1>2C_0$. If $R$ is large enough, say
$R\geq R_0(C_0, C_1)$, then, for any $\bm=(i,j,m)\in \ZZ(R)$
and any chart $\phi_\bm\in \FF(m,\CC^s_\bm, C_0,C_1)$, we have
$\phi_\bm(B(m,C_0)) \subset V_{i,j,0}^R$. For $\bm=(i,j,m) \in
\ZZ(R)$, we can therefore consider the set of charts ($R$,
$C_0$ and $C_1$ do not appear in the notation for the sake of
brevity)
\begin{equation}\label{ourcharts}
\FF(\bm)\coloneqq\{\bphi_\bm=(\kappa_\bm^R)^{-1} \circ \phi_\bm:B(m,C_0) \to X \, ,\,
\phi_\bm\in \FF(m,\CC^s_\bm, C_0,C_1)\}\, .
\end{equation}
The image under  a chart $\bphi_\bm \in \FF(\bm)$ of the stable
foliation in $\R^d$ is a local foliation around the point
$q_\bm$, whose tangent space is everywhere contained in
$(D\kappa_\bm^R)^{-1}(\CC^s_\bm)$. This set is almost contained
in the stable cone $\CC^{(s)}_i(q_\bm)$, by our choice of
charts $\kappa_{i,j}$ and extended cones $\CC_{i,j}$.

Let us fix once and for all a $C^\infty$ function\footnote{Such
a function exists since the balls of radius $d$ centered at
points in $\Z^d$ cover $\R^d$.} $\rho:\R^d\to [0,1]$  such that
  \begin{equation*}
  \rho(z)=0 \text{ if } |z| \ge d\qquad  \text{ and } \qquad \sum_{m\in \Z^d} \rho(z-m)=1\, .
  \end{equation*}
For $\bm=(i,j,m)\in \ZZ(R)$, let $\rho_m(z)=\rho(z-m)$, and
\[
\brho_{\bm}\coloneqq\brho_{\bm}(R)=\rho_m\circ \kappa_\bm^R : X \to [0,1]\, .
\]
Since $\rho_m$ is compactly supported in
$\kappa_{i,j}^R(U_{i,j,0})$ if $m\in \ZZ_{i,j}(R)$ (and $R$ is
large enough, depending on $d$), the above expression is
well-defined. This gives a partition of unity in the following
sense:
\[
\sum_{m\in \ZZ_{i,j}(R)} \brho_{i,j,m}(q)= 1 \,,  \forall q \in U_{i,j,1}\, ,
\quad \brho_{i,j,m}(q)=0 \, , \forall q \notin  U_{i,j,0}\, .
\]
Our choices ensure that the intersection multiplicity of this
partition of unity is bounded, uniformly in $R$, i.e., for any
point $q$, the number of functions such that
$\brho_{\bm}(q)\not=0$ is bounded independently of $R$.

\medskip

The space we shall consider depends in an essential way on the
parameters $p$, $t$, and $s$. It will also depend, in an
inessential way, on  the choices we have made  (i.e.,  the
reference charts $\kappa_{i,j}$, the extended cones
$\CC_{i,j}$, the constants $C_0$ and $C_1$, the function
$\rho$, and $R\geq R_0(C_0, C_1)$): Different choices would
lead to different spaces, but all such spaces share the same
features. We emphasize the dependence on $R$, $C_0$ and $C_1$
in the notations, since all the other choices will be fixed
once and for all.

\begin{definition}[Spaces  $\HHH_{p}^{t,s}(R, C_0, C_1)$ of distributions on $X$]
\label{defnorm}
Let $1<p < \infty$, $s,t\in \R$, let $1<C_0<C_1/2$ and let
$R\geq R_0(C_0, C_1)$. For any system of charts $\Phi=\{
\bphi_{\bm}\in \FF(\bm) \st \bm\in \ZZ(R)\} $, let for
$\omega\in L^\infty(X_0)$
  \begin{equation}
  \label{defnormen}
  \norm{\omega}{\Phi}=\left(\sum_{\bm\in \ZZ(R)}
  \norm
  {(\brho_\bm(R) \cdot 1_{O_\bm} \omega)\circ \bphi_\bm}
  {H_p^{t,s}}^p\right)^{1/p}\, ,
  \end{equation}
and put $\norm{\omega}{\HHH_{p}^{t,s}(R,C_0,C_1)}
=\sup_{\Phi}\norm{\omega}{\Phi}$, the supremum ranging over all
such systems of charts $\Phi$.

The space $\HHH_{p}^{t,s}(R,C_0,C_1)$ is the closure of $\{
\omega \in L^\infty(X_0) \mid
\norm{\omega}{\HHH_{p}^{t,s}(R,C_0,C_1)}<\infty\}$ for the norm
$\norm{\omega}{\HHH_{p}^{t,s}(R,C_0,C_1)}$.
\end{definition}

For fixed $R$, the sum in \eqref{defnormen} involves a
uniformly bounded number of terms. Since the charts $\bphi_\bm$
have a uniformly bounded $C^1$ norm, the functions
$(\brho_\bm(R) \cdot \omega)\circ \bphi_\bm$ are uniformly
bounded in $C^1$ if $\omega$ is $C^1$. Moreover, $H_p^{t,s}$
contains the space of compactly supported $C^1$ functions on
$\real^d$ when $|t|+|s|\leq 1$. Therefore, if there were no
multiplication by $1_{O_\bm}$ in \eqref{defnormen}, then
$\norm{\omega}{\HHH_{p}^{t,s}(R,C_0,C_1)}$ would be finite for
any $C^1$ function $\omega$. When $s, t\in (1/p-1, 1/p)$,
multiplication by $1_{O_\bm} \circ \bphi_\bm$ leaves the space
$H_p^{t,s}$ invariant (see Lemma \ref{lem:multiplier} below).
Therefore, all $C^1$ functions belong to $\HHH_p^{t,s}(R, C_0,
C_1)$ in this case.

\begin{remark}\label{natural2}
A priori, the space $\HHH_{p}^{t,s}(R,C_0,C_1)$ is not
isomorphic to a Triebel space $H^{t,s}_p(X_0)$. However, our
assumptions ensure that $\HHH_{p}^{t,0}(R,C_0,C_1)$ is
isomorphic to the Sobolev-Triebel space $H^{t,0}_p(X_0)$
(whatever the value of $R$, $C_0$, $C_1$) when $-\beta <
t<1+\beta$. See Lemma~\ref{embed} for various embedding claims
on the spaces $\HHH_{p}^{t,s}(R,C_0,C_1)$.
\end{remark}

\subsection{Reduction of the main result}

\label{reducr}

In this subsection, we shall deduce Theorem~\ref{MainTheorem}
from  the following result about the spaces introduced in
Subsection~\ref{spaces}.

To simplify the statements, we will use the following
convention throughout this article: the sentence ``for all
large enough $x, y, z, \dots$'' means that, if $x$ is large
enough, then, if $y$ is large enough (possibly depending on
$x$), then if $z$ is large enough (possibly depending on $x$
and $y$), $\dots$.

\begin{theorem}
\label{MainTheorembis}
Let $T$, $g$, and  $p$, $t$, $s$ satisfy the assumptions of
Theorem~\ref{MainTheorem}. There exist $C_0>1$ and $\Cs>0$ such
that, for any $N>0$, any large enough $C_1>2C_0$, any large
enough integer $n$ which is a multiple of $N$, and any large
enough $R$, the operator $\LL_g^n$ is bounded on
$\HHH_p^{t,s}(R, C_0, C_1)$, and its essential spectral radius
is at most
  \begin{equation}\label{thebest2}
  (\Cs N)^{n/N} (D_{n}^b)^{1/p} \cdot (D_{n}^e)^{1-1/p}
  \cdot \norm{g^{(n)}|\det DT^{n}|^{1/p}
  \max(\lambda_{u,n}^{-t},\lambda_{s,n}^{-(t-|s|)})}{L^\infty} \, .
  \end{equation}
\end{theorem}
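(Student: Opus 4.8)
The plan is to establish a Lasota--Yorke (Doeblin--Fortet) inequality for $\LL_g^n$ on $\HHH_p^{t,s}(R,C_0,C_1)$, together with a compact embedding, and then invoke Hennion's theorem to read off the bound on the essential spectral radius. The first step is to verify, via the invariance Lemma~\ref{lemcompose} and Proposition~\ref{propinvar}, that for suitably chosen $C_0$, then large $C_1$, large $n$ (a multiple of $N$), and large $R$, the operator $\LL_g^n$ is indeed bounded on $\HHH_p^{t,s}(R,C_0,C_1)$: the point is that pulling back an admissible foliation at a point of $\ZZ(R)$ under $T_\ii^n$ produces, after reparametrization, another admissible foliation at the appropriate point, so the supremum defining the norm transforms in a controlled way. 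The bunching conditions \eqref{bunch} and \eqref{bunch2} are exactly what makes the family $\FF(m,\CC^s,C_0,C_1)$ stable under the graph transform with $C^1$-norm bounds uniform in $C_0$.

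The second and central step is the norm estimate itself. Fix a system of charts $\Phi$ achieving (nearly) the supremum for $\LL_g^n\omega$. For each $\bm\in\ZZ(R)$, the piece $(\brho_\bm\cdot 1_{O_\bm}\cdot\LL_g^n\omega)\circ\bphi_\bm$ is, after localization, a sum over the (at most $D_n^e$) itineraries $\ii$ landing near $q_\bm$ of terms of the form $\bigl(g^{(n)}\cdot (\text{cutoff})\cdot (\omega\circ T_\ii^n)\bigr)$ transported through the charts. One writes this as a composition $w\circ\Psi$ with $\Psi$ a diffeomorphism in the class $D^1_{1+\alpha}$ (produced by Lemma~\ref{lemcompose}), multiplied by a $C^\regg$ function (the weight $g^{(n)}$, whose regularity $\regg>t+|s|$ is used here) and by a characteristic function $1_{O_\ii}\circ\bphi_\bm$ of a set whose boundary is transverse to the stable cone --- so by Lemma~\ref{lem:multiplier} it is a bounded multiplier on $H_p^{t,s}$. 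The chain-rule estimates (Lemmas~\ref{CompositionDure} and~\ref{lemcomposeD1alpha}) and the Leibniz estimate (Lemma~\ref{Leib}) then bound the $H_p^{t,s}$-norm of this piece by a constant times the $H_p^{t,s}$-norm (plus a lower-order $H_p^{t',s'}$-norm, with $t'<t$, $s'$ as dictated by \eqref{pstcond}) of the corresponding piece of $\omega$ through a \emph{new} chart, times the product of three scalar factors: a Jacobian factor $|\det DT^n|^{1/p}$ coming from the change of variables in $L^p$, a contraction/expansion factor $\max(\lambda_{u,n}^{-t},\lambda_{s,n}^{-(t-|s|)})$ coming from the anisotropic symbol $a_{t,s}$ under the linear part of $\Psi$, and the weight factor $\norm{g^{(n)}}{L^\infty}$-type term. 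Summing over $\bm$ and over the $\le D_n^e$ preimage-itineraries, using the $\ell^p$ structure of the norm, the bounded intersection multiplicity, and the bound $D_n^b$ on the number of branches overlapping at the source, yields
\begin{equation*}
\norm{\LL_g^n\omega}{\HHH_p^{t,s}(R,C_0,C_1)}
\le (\Cs N)^{n/N}(D_n^b)^{1/p}(D_n^e)^{1-1/p}M_n\,\norm{\omega}{\HHH_p^{t,s}(R,C_0,C_1)}
+\Cs_n\norm{\omega}{\HHH_p^{t',s'}(R,C_0,C_1)}\, ,
\end{equation*}
where $M_n=\norm{g^{(n)}|\det DT^n|^{1/p}\max(\lambda_{u,n}^{-t},\lambda_{s,n}^{-(t-|s|)})}{L^\infty}$; the factor $(\Cs N)^{n/N}$ absorbs the geometric cost of the $n/N$ groups of $N$ iterations needed to control the reparametrizations. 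The final step is to combine this with the compact embedding $\HHH_p^{t,s}(R,C_0,C_1)\hookrightarrow\HHH_p^{t',s'}(R,C_0,C_1)$ of Lemma~\ref{embed} and apply Hennion's theorem, which gives essential spectral radius at most the coefficient $(\Cs N)^{n/N}(D_n^b)^{1/p}(D_n^e)^{1-1/p}M_n$, i.e.\ exactly \eqref{thebest2}.

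The main obstacle is the second step, specifically the passage from the heuristic ``the transfer operator contracts the anisotropic norm by $\max(\lambda_{u,n}^{-t},\lambda_{s,n}^{-(t-|s|)})$'' to a rigorous inequality in the presence of (i) charts that are only $C^{1+\beta}$ rather than linear, (ii) the non-invariance of exact stable leaves, so that one must work with the supremum over the whole class $\FF$ and show the new charts produced by Lemma~\ref{lemcompose} stay in that class, and (iii) the characteristic-function multipliers, which interact delicately with the anisotropic smoothness and force the constraints $1/p-1<s<0<t<1/p$. Controlling the symbol $a_{t,s}$ under a diffeomorphism that is close to, but not equal to, a block-triangular linear map (the unstable block expanding, the stable block contracting) is where the chain-rule Lemmas of Section~\ref{sec:local} do the real work, and where the choice of $t,s$ in \eqref{pstcond} --- in particular $-\beta<t-|s|<0$ and $\alpha t+|s|<\alpha$ --- is exactly what is needed to make the errors lower-order.
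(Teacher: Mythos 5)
Your proposal is correct and follows essentially the same route as the paper's proof: a Lasota--Yorke inequality for $\LL_g^n$ (the paper's Lemma~\ref{maintechlemma}) obtained by decomposing over branches, removing the characteristic functions via transversality and Lemma~\ref{lem:multiplier}, pulling back and regluing the admissible charts with Lemma~\ref{lemcompose}, and applying Lemmas~\ref{Leib}, \ref{CompositionDure} and~\ref{lemcomposeD1alpha}, then concluding with the compact embedding of Lemma~\ref{embed} and Hennion's theorem. The only imprecision is the origin of the factor $(\Cs N)^{n/N}$: it is the cost of the multiplier bounds for $1_{T_\ii^n O_\ii}$, written as a product of $n/N$ characteristic functions of sets with at most $LN$ boundary hypersurfaces each (so that the transversality constants, and hence $C_1$, depend only on $N$ and not on $n$), rather than a cost of the reparametrizations.
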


The above theorem will be proved in Section~\ref{mainsec}.
Below, we deduce Theorem~\ref{MainTheorem} from Theorem~
\ref{MainTheorembis}, using the following proposition (which
will be proved at the end of Section ~ \ref{mainsec}).

\begin{proposition}
\label{propinvar}
Let $T$, $g$, and  $p$, $t$, $s$ satisfy the assumptions of
Theorem~\ref{MainTheorem}, and let $C_0$ be given by Theorem
\ref{MainTheorembis}. For any large enough $C_1>0$ and $R>0$,
and any large enough $C'_1>0$ and $R'>0$, then for any large
enough $N$, $\LL_g^N$ is continuous from $\HHH_p^{t,s}(R, C_0,
C_1)$ to $\HHH_p^{t,s}(R', C_0, C'_1)$.
\end{proposition}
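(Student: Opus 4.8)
The plan is to reduce the statement to a purely local assertion about the action of a single branch $T_i$ (read in the reference charts $\kappa_{i,j}$) on the local Triebel spaces, and then iterate $N$ times so that the hyperbolicity has had time to act. First I would fix a point $q_{\bm'}$ indexed by $\bm'=(i',j',m')\in\ZZ(R')$ and a chart $\bphi_{\bm'}\in\FF(\bm')$ at which the target norm $\norm{\cdot}{\HHH_p^{t,s}(R',C_0,C'_1)}$ is (nearly) realized. Pulling back, one has to control $\norm{(\brho_{\bm'}(R')\cdot 1_{O_{\bm'}}\cdot\LL_g^N\omega)\circ\bphi_{\bm'}}{H_p^{t,s}}$. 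Unfolding the definition of $\LL_g^N$, this is a finite sum over itineraries $\ii\in I^N$ of terms of the form $(\tilde g_\ii\cdot (u_\ii\circ T_\ii^N))\circ\bphi_{\bm'}$, where $u_\ii$ is (a piece of) $\omega$ localised near a preimage patch, restricted by $1_{O_\ii}$, and $\tilde g_\ii$ is the weight. The number of summands is controlled by the complexities $D^b_N$, $D^e_N$ but here we only need finiteness, so a crude bound suffices. Each summand is localised, via a partition of unity subordinate to the source charts, into pieces living in some $H_p^{t,s}$ attached to a source index $\bm=(i,j,m)\in\ZZ(R)$.

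The core of the argument is then the following local statement: the composition $\bphi_{\bm'}^{-1}\circ (\kappa^R_\bm)^{-1}$ conjugated appropriately — more precisely the map $\psi_{\bm,\bm'}:=\phi_{m}^{-1}\circ\kappa^R_\bm\circ (T^N_\ii)^{-1}\circ (\kappa^{R'}_{\bm'})^{-1}\circ\phi_{m'}$, wherever defined — is, for a \emph{suitable} choice of source foliation $\phi_m\in\FF(m,\CC^s_\bm,C_0,C_1)$, a map of the class $\DD(\Cs)$ or $D^1_{1+\alpha}$ studied in Section~\ref{sec:local}, that it is stable-leaf-preserving up to the controlled errors handled there, and that it has a $C^\beta$ Jacobian with norm bounded independently of $R,R'$. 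Granting this, the composition/chain-rule Lemmas~\ref{CompositionDure}, \ref{lemcomposeD1alpha}, the Leibniz Lemma~\ref{Leib} (to absorb $\brho_{\bm'}$, the weight $g^{(N)}$ which is $C^\regg$ with $\regg>t+|s|$, and cutoffs from the source partition of unity), and the multiplier Lemma~\ref{lem:multiplier} (to handle $1_{O_\ii}$, using $1/p-1<s<0<t<1/p$) together give a bound
\[
\norm{(\tilde g_\ii\cdot(u_\ii\circ T^N_\ii))\circ\bphi_{\bm'}}{H_p^{t,s}}
\le \Cs\sum_{\bm}\norm{(\brho_\bm(R)\cdot 1_{O_\bm}\cdot\omega)\circ\bphi_\bm}{H_p^{t,s}},
\]
where on the right $\bphi_\bm\in\FF(\bm)$ is the source chart determined by the choice of $\phi_m$. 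Summing the $p$-th powers over the finitely many $\bm',\ii$, taking $\sup$ over the target charts $\bphi_{\bm'}$, and recognising the right-hand side as a sum bounded by $\sup_\Phi(\cdot)=\norm{\omega}{\HHH_p^{t,s}(R,C_0,C_1)}$, yields continuity.

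The key point — and the main obstacle — is producing the source foliation $\phi_m$ so that $\psi_{\bm,\bm'}$ lands in the right class: this is exactly where the invariance of the class $\FF$ under the graph transform is needed, i.e. Lemma~\ref{lemcompose} (equivalently Proposition~\ref{propinvar}'s cousin in Section~\ref{invv}), which is why the statement is only claimed for $N$ large (so that \eqref{bunch}–\eqref{bunch2} have been iterated enough) and after first fixing $C_1,R$ and then choosing $C'_1,R'$ large (so the zoomed target charts see the image foliation as nearly affine, with uniformly bounded $C^{1+\beta}$ geometry). The errors coming from the fact that $(\kappa^R_\bm)^{-1}$, $(\kappa^{R'}_{\bm'})^{-1}$ are only $C^\infty$ but not affine, and from the compact-transversality of the extended cones $\CC_\bm\to\CC_{\bm'}$, are made small by taking the supports of the reference charts small and $R,R'$ large, exactly as in the construction after Definition~\ref{extc}; this is routine once the graph-transform invariance is in hand. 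A secondary technical nuisance is that a single preimage patch of $\bphi_{\bm'}(B(m',C_0))$ may be cut by several discontinuity hypersurfaces and overlap several source charts, but the transversality condition (Definition~\ref{transs}) together with Lemma~\ref{lem:multiplier} controls each such cut, and the bounded intersection multiplicity of the partitions of unity keeps the number of pieces under control uniformly in $R$.
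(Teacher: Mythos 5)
Your overall strategy is the right one and matches the paper's in spirit: the target norm is computed in an arbitrary system of $(R',C_0,C'_1)$-admissible charts, $\LL_g^N$ is unfolded and localized, and the crux is to exhibit, for each localized piece, a pulled-back foliation that is $(R,C_0,C_1)$-admissible at the source, after which Lemmas~\ref{Leib}, \ref{lem:multiplier}, \ref{CompositionDure} and \ref{lemcomposeD1alpha} give a (crude) bound dominated by $\norm{\omega}{\HHH_p^{t,s}(R,C_0,C_1)}$. The genuine gap is in how you propose to produce that foliation: you apply the graph-transform machinery in a single pass to $T^N_\ii$ for the given $N$. But Lemma~\ref{lemcompose} requires $\norm{\TT^{-1}\circ M-\id}{C^{1+\alpha}}\le\epsilon$, i.e.\ the iterate read in the zoomed charts must be close to its linearization on balls of unit size, and in the proof of Lemma~\ref{maintechlemma} this is arranged by choosing $R$ large \emph{after} $n$. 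In Proposition~\ref{propinvar} the quantifiers go the other way: $R$ and $R'$ are fixed first and the conclusion must hold for \emph{every} sufficiently large $N$; for fixed $R,R'$ the nonlinearity of $T^N$ seen at scale $1/R$ grows with $N$, so your single application of Lemma~\ref{lemcompose} is simply not available once $N$ is large, and taking $N$ larger makes things worse, not better (it does not merely ``iterate \eqref{bunch}--\eqref{bunch2} more''). The paper resolves exactly this by a splitting: it proves the cross-space bound for one bridging exponent $n$ compatible with the fixed scales, and then writes $\LL_g^N=\LL_g^{n}\circ\LL_g^{n'}$, absorbing the excess iterations into $\LL_g^{n'}$ acting boundedly on $\HHH_p^{t,s}(R,C_0,C_1)$ itself (Theorem~\ref{MainTheorembis}). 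Your proposal contains no such mechanism, so as written it does not prove the statement ``for any large enough $N$''.

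A second, related point: the invariance you need is not literally Lemma~\ref{lemcompose}, which takes input and output foliations in classes with the \emph{same} $(C_0,C_1)$ and at the same scale, whereas here the input (target) charts are $(C_0,C'_1)$-admissible at scale $R'$ and the output must be $(C_0,C_1)$-admissible at scale $R$. The paper's proof rests on Remark~\ref{remOKgrand} (explicitly called ``the key to the proof of Proposition~\ref{propinvar}''): by strengthening hyperbolicity/bunching the output chart can be defined on an arbitrarily large ball with arbitrarily small H\"older constants, hence remains admissible when the scale and smoothness constant are changed within a factor $2$; arbitrary pairs $(R,C_1)$, $(R',C'_1)$ are then reached by the composition argument above. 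Your sketch handles this mismatch only with the vague remark that $C'_1,R'$ are chosen large ``so the zoomed target charts see the image foliation as nearly affine'', which both inverts the roles (it is the \emph{pulled-back} foliation whose constants must be improved, and largeness of $C'_1$ is needed for the near-affine target charts in the multiplier step, not for the foliation) and leaves the actual cross-parameter step unargued. Finally, a smaller inaccuracy: $\psi_{\bm,\bm'}$ is not itself in $\DD(\Cs)$ or $D^1_{1+\alpha}(\Cs)$ with uniformly bounded Jacobian --- it contains the full hyperbolicity of $T^N$; what holds, and what you implicitly use by invoking both Lemma~\ref{CompositionDure} and Lemma~\ref{lemcomposeD1alpha}, is the factorization $\Psi\circ D^{-1}\circ\Psi_m$ of Lemma~\ref{lemcompose}(c).
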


\begin{proof}[Proof that Theorem ~ \ref{MainTheorembis}
implies Theorem~\ref{MainTheorem}]

Theorem \ref{MainTheorembis} does not claim that the space
$\HHH_p^{t,s}(R, C_0, C_1)$ is invariant under $\LL_g$. This
issue is easy to deal with: Consider $C_1$, $n$ and $R$ such
that Theorem ~\ref{MainTheorembis} applies to $\LL_g^n$ acting
on $\HHH_p^{t,s}(R, C_0, C_1)$, and let $H(n,R, C_0,
C_1)=H(p,t,s,n,R, C_0, C_1)$ be the closure of $L^\infty(X_0)$
for the norm
  \begin{equation}
  \label{normeHnR}
  \norm{\omega}{H(p,t,s, n,R, C_0, C_1)}=\sum_{j=0}^{n-1}\norm{\LL_g^j \omega}{\HHH_p^{t,s}(R, C_0, C_1)}\, .
  \end{equation}
Since $\norm{\LL_g^n \omega}{\HHH_p^{t,s}(R, C_0, C_1)}\leq C
\norm{\omega}{\HHH_p^{t,s}(R, C_0, C_1)}$ by Theorem
\ref{MainTheorembis}, it follows that the operator $\LL_g$ is
continuous on $H(n,R, C_0, C_1)$.

Moreover, for any $C^1$ function $\omega$ and any $j$, the
function $\LL_g^j \omega=\sum_{\ii} 1_{T_\ii O_\ii} (g^{(j)}
\omega) \circ T_\ii^{-j}$ is a sum of $C^\gamma$ functions
multiplied by characteristic functions of nice sets.  The
discussion following Definition \ref{defnorm} (with $C^1$
replaced by $C^\gamma$) implies that $\LL_g^j \omega$ belongs
to $\HHH_p^{t,s}(R, C_0, C_1)$. Hence, $H(n,R, C_0, C_1)$
contains $C^1$ (in particular, it is not reduced to $\{0\}$).

To finish, we shall prove that the claim on the essential
spectral radius of $\LL_g$ holds on $\HHH=H(n,R, C_0, C_1)$, if
$C_1$, $n$ and $R$ are large enough. If $\MM$ is an operator
acting on a Banach space $E$, we denote by $\ress(\MM,E)$ its
essential spectral radius.

\emph{First claim: $\ress(\LL_g, H(n,R, C_0, C_1))\leq
\ress(\LL_g^n, \HHH_p^{t,s}(R, C_0, C_1))^{1/n}$.}

Let us admit this claim for the moment. Then, by
\eqref{thebest2}, the essential spectral radius of $\LL_g$ on
$H(n,R, C_0, C_1)$ is at most
\begin{equation*}
  (\Cs N)^{1/N}(D_{n}^b)^{1/(pn)} \cdot (D_{n}^e)^{(1/n)(1-1/p)}
  \cdot \norm{g^{(n)}|\det DT^{n}|^{1/p}
  \max(\lambda_{u,n}^{-t},\lambda_{s,n}^{-(t-|s|)})}{L^\infty}^{1/n} \, .
\end{equation*}
Since $(\Cs N)^{1/N}$ tends to $1$ when $N\to\infty$, this
factor is not troublesome. However, we do not have
Theorem~\ref{MainTheorem} yet: In \eqref{thebest}, there is a
limit in $n$, while our last bound is for a fixed $n$. This is
why we need to show the following statement:

\emph{Second claim: Let $r$ be the limit in \eqref{thebest}. If
$C_1$, $n$ and $R$ are large enough, we have
$\ress(\LL_g^{n},\HHH_p^{t,s}(R, C_0, C_1))\le r^n$.}

Putting together the first and second claims we deduce that the
space $\HHH=H(n,R, C_0, C_1)$ satisfies the conclusion of
Theorem~\ref{MainTheorem} if $C_1$, $n$ and $R$ are large
enough.

\smallskip
It remains to prove the two above claims. For this, we recall a
characterization of the essential spectral radius of an
operator $\MM$ acting on a Banach space $E$.
\begin{enumerate}
\item Let $\tau>0$, assume that there exist a sequence $j(n)\to
\infty$ and a sequence of compact operators $K_n: E\to E_n$
(for some Banach spaces $E_n$) such that
$\norm{\MM^{j(n)}w}{E}\leq \tau^{j(n)} \norm{w}{E} +
\norm{K_n w}{E_n}$ for any $w\in E$ (or, equivalently, in a
dense subset of $E$) and any large enough $n$. Then
$\ress(\MM,E)\leq \tau$.
\item Conversely, if $\tau>\ress(\MM,E)$, there exists a sequence
of compact operators $K_n:E\to E$ such that, if $n$ is
large enough, $\norm{\MM^n w}{E}\leq \tau^n
\norm{w}{E}+\norm{K_n w}{E}$ for any $w\in E$.
\end{enumerate}
The first assertion was proved by Hennion \cite{hennion} using
a formula of Nussbaum. The second assertion follows from the
spectral decomposition $\MM=K+A$ where $KA=AK=0$, $K$ has
finite rank (and corresponds to the eigenvalues of $\MM$ of
modulus $\geq \tau$), and the spectral radius of $A$ is smaller
than $\tau$ (just take $K_n=K^n$).

We prove the first claim. Let $\tau>\ress(\LL_g^n,
\HHH_p^{t,s}(R, C_0, C_1))^{1/n}$. By Item 2, there exists a
sequence of compact operators $K_{kn}:\HHH_p^{t,s}(R, C_0, C_1)
\to \HHH_p^{t,s}(R, C_0, C_1)$ such that, for large enough $k$,
  \begin{equation*}
  \norm{\LL_g^{kn} \omega}{\HHH_p^{t,s}(R, C_0, C_1)}\leq \tau^{kn}
  \norm{\omega}{\HHH_p^{t,s}(R, C_0, C_1)}+\norm{K_{kn} \omega}{\HHH_p^{t,s}(R, C_0, C_1)}.
	\end{equation*}
Therefore, for $\omega\in H(n,R, C_0, C_1)$,
  \begin{align*}
  \norm{\LL_g^{kn} \omega}{H(n,R, C_0, C_1)}
  &=\sum_{j=0}^{n-1} \norm{\LL_g^{kn} \LL_g ^j \omega}{\HHH_p^{t,s}(R, C_0, C_1)}
  \\&
  \leq \sum_{j=0}^{n-1} \tau^{kn}\norm{\LL_g ^j \omega}{\HHH_p^{t,s}(R, C_0, C_1)}
  + \norm{K_{kn}\LL_g^j\omega}{\HHH_p^{t,s}(R, C_0, C_1)}
  \\&=
  \tau^{kn}\norm{\omega}{H(n,R, C_0, C_1)}+\norm{\tilde K_{kn}\omega}{\HHH_p^{t,s}(R, C_0, C_1)^n},
  \end{align*}
where the operator $\tilde K_{kn}$ from $H(n,R, C_0, C_1)$ to
$\HHH_p^{t,s}(R, C_0, C_1)^n$ is given by
  \begin{equation*}
  \tilde K_{kn}\omega=( K_{kn}\omega, K_{kn} \LL_g\omega,\dots,
  K_{kn}\LL_g^{n-1}\omega) \, .
  \end{equation*}
Since this operator is compact, Item 1 above gives that
$\ress(\LL_g, H(n,R, C_0, C_1))\leq \tau$, and thus the first
claim.

\smallskip
Finally, we prove the second claim. The idea is to use
Proposition~\ref{propinvar} to go from $\HHH_p^{t,s}(R, C_0,
C_1)$ to $\HHH_p^{t,s}(R', C_0, C'_1)$ for large $C'_1$ and
$R'$, use the good control on the essential spectral radius on
$\HHH_p^{t,s}(R', C_0, C'_1)$, and then return to
$\HHH_p^{t,s}(R, C_0, C_1)$. Let $C_1$, $n$ and $R$ be as in
the statement of the second claim. Consider $\tau>r$, and let
us fix $C'_1>2C_0$, $k$ and $R'$ large enough so that
$\ress(\LL_g^{kn}, \HHH_p^{t,s}(R', C_0, C'_1)) <\tau^{kn}$:
This is possible by Theorem~\ref{MainTheorembis}. Therefore, by
Item 2, for large $j$, there exists a compact operator
$K_{jkn}: \HHH_p^{t,s}(R', C_0, C'_1) \to \HHH_p^{t,s}(R', C_0,
C'_1)$ such that $\norm{\LL_g^{jkn} \omega}{\HHH_p^{t,s}(R',
C_0, C'_1)}\leq \tau^{jkn} \norm{\omega}{\HHH_p^{t,s}(R', C_0,
C'_1)}+\norm{K_{jkn} \omega}{\HHH_p^{t,s}(R', C_0, C'_1)}$. By
Proposition~\ref{propinvar}, we can choose $m$ such that the
operator $\LL_g^{mn}$ sends $\HHH_p^{t,s}(R, C_0, C_1)$ to
$\HHH_p^{t,s}(R', C_0, C'_1)$ and $\HHH_p^{t,s}(R', C_0, C'_1)$
to $\HHH_p^{t,s}(R, C_0, C_1)$ continuously, with a norm
bounded by a constant that we denote by $C$. Then, for any
$\omega \in \HHH_p^{t,s}(R, C_0, C_1)$,
  \begin{align*}
  \norm{ \LL_g^{(jk+2m)n}\omega}{\HHH_p^{t,s}(R, C_0, C_1)}
  \leq C \norm{ \LL_g^{(jk+m)n}\omega}{\HHH_p^{t,s}(R', C_0, C'_1)}
  \hspace{-6cm}&
  \\&
  \leq C \tau^{jkn}\norm{\LL_g^{mn}\omega}{\HHH_p^{t,s}(R',C_0,C'_1)}+\norm{K_{jkn}
  \LL_g^{mn}\omega}{\HHH_p^{t,s}(R', C_0, C'_1)}
  \\&
  \leq C^2 \tau^{jkn} \norm{\omega}{\HHH_p^{t,s}(R,C_0,C_1)}
  +\norm{K_{jkn}  \LL_g^{mn}\omega}{\HHH_p^{t,s}(R',C_0,C'_1)} \, .
  \end{align*}
The operator $\tilde K_{jkn}\coloneqq K_{jkn}\LL_g^{mn}$ is
compact from $\HHH_p^{t,s}(R,C_0,C_1)$ to
$\HHH_p^{t,s}(R',C_0,C'_1)$. Therefore, Item 1 ensures that
  \begin{equation*}
  \ress(\LL_g^n,\HHH_p^{t,s}(R,C_0,C_1))
  \leq \liminf_{j\to\infty} (C^2 \tau^{jkn})^{1/(jk+2m)}
  =\tau^n \, .
  \end{equation*}
This ends the proof of the second claim and of the theorem.
\end{proof}

\section{Invariance of the class of cone admissible local foliations}
\label{invv}

In order to prove the bounds necessary for
Theorem~\ref{MainTheorembis}, we need to check that the class
of admissible foliations defined in Subsection
\ref{sec:def_fol} is invariant under the iteration of the map
$T^{-1}$ (viewed in charts). This is the purpose of the key
Lemma~\ref{lemcompose} below, which says that if $\phi_m\in
\FF(m,  \CC^s, C_0, C_1)$ is an admissible foliation, then the
chart $\phi'$ obtained by pulling it back by a diffeomorphism
$\TT^{-1}$ of $\real^d$, and  reparameterizing to put it in
standard form is still admissible {\it if the map $\TT$ is
sufficiently hyperbolic, $C^{1+\alpha}$, and satisfies a
bunching condition} (see \eqref{suffhyp}). This fact is not
surprising: It is well known (see e.g. the Hadamard-Perron
arguments in \cite[\S 6.2, \S 19]{katok}) that $C^1$ foliations
remain $C^1$ after a graph transform if the transformation
satisfies a bunching condition. However, the statement of
Lemma~\ref{lemcompose} is a little involved because (in order
to avoid exponential proliferation of the number of charts) we
need to ``glue together'' all pulled back charts $\phi_m$
associated to a set $\JJ$ of ``well-separated'' points $m$.
This must be done carefully, controlling  the size of the
domains of definition of the new chart $\phi'$ thus produced.

If the pullback of a foliation $\phi(x,y)=(F(x,y),y)$ under a
map $\TT$ is given in standard form by a map
$\phi'(x,y)=(F'(x,y),y)$, this means that $\TT^{-1}\circ
\phi=\phi'\circ \TTT$ for some map $\TTT$ defined on a subset
of $\R^d$, and sending stable leaves to stable leaves. This map
$\TTT$ is needed to straighten $\TT^{-1}\circ \phi$, which
typically does not have the form $(x,y)\mapsto (F'(x,y),y)$.
The map $\TTT$ corresponds to $\TT^{-1}$ in the charts
$\phi,\phi'$, and it will be important to control well its
smoothness and hyperbolicity. In particular, the following
definition will be useful.

\begin{definition}\label{3.1} For $C>0$ let $D^1_{1+\alpha}(C)$ denote  the set of $C^1$
diffeomorphisms $\Psi$ defined on a subset of $\R^d$, sending
stable leaves to stable leaves, and such that
  \begin{equation*}
  \max\bigl ( \sup |D\Psi(x,y)|, \sup |D\Psi^{-1}(x,y)|,
  \sup_{x,y,y'} \frac{|D\Psi(x,y)-D\Psi(x,y')|}{|y-y'|^\alpha}\bigr )\leq C \, .
  \end{equation*}
\end{definition}

Before we state Lemma~\ref{lemcompose}, we need one more
notation:

\begin{definition}
Let $\CC$ and $\tilde \CC$ be extended cones (Definition ~
\ref{extc}). If an invertible matrix $M:\R^d\to \R^d$ sends
$\CC$ to $\tilde\CC$ compactly, let
$\lambda_u(M)=\lambda_u(M,\CC,\tilde\CC)$ be the least
expansion under $M$ of vectors in $\CC^u$, and
$\lambda_s(M)=\lambda_s(M,\CC,\tilde\CC)$ be the inverse of the
least expansion under $M^{-1}$ of vectors in $\tilde\CC^s$.
Denote by $\Lambda_u(M)=\Lambda_u(M,\CC,\tilde\CC)$ and $
\Lambda_s(M)=\Lambda_s(M,\CC,\tilde\CC)$ the strongest
expansion and contraction coefficients of $M$ on the same
cones.
\end{definition}

The key lemma can now be stated:

\begin{lemma}
\label{lemcompose}
Let $\CC$ and $\tilde\CC$ be extended cones, let $\alpha\in
(0,1]$ and let $\beta \in (0,\alpha)$. For any large enough
$C_0$ (depending on $\CC$ and $\tilde\CC$) and any $C_1 >
2C_0$, there exist constants $C$ (depending on $\CC$,
$\tilde\CC$ and $C_0$) and $\epsilon$ (depending on $\CC$,
$\tilde \CC$, $C_0$ and $C_1$) satisfying the following
properties:

Let $\TT$ be a $C^{1+\alpha}$ diffeomorphism of $\R^d$ with
$\TT(0)=0$ and, setting $M\coloneqq D\TT(0)$, so that
  \begin{align}
  \nonumber &
  \norm{ \TT^{-1}  \circ M -\id }{C^{1+\alpha}} \leq \epsilon\, , \quad
  M \text{  sends } \CC \text{ to } \tilde\CC \text{ compactly, } \\
   \label{suffhyp}
  &
  \lambda_{s}(M)^{\alpha-\beta} \Lambda_{u}(M)^{1+\beta} \lambda_{u}(M) ^{-1}  <\epsilon \, ,
  \qquad  \lambda_u(M)>\epsilon^{-1}\, ,\,\,  \lambda_s(M)^{-1}>\epsilon^{-1} \, .
\end{align}
Let $\JJ \subset \R^d$ be a finite set such that $|m-m'|\geq C$
for all $m\neq m'\in \JJ$, and consider any family of charts
$\{\phi_m\in \FF(m,\tilde\CC^s, C_0, C_1)\mid m\in \JJ\}$.

Then, defining
  \begin{equation*}
  \JJ'\coloneqq \{ m \in \JJ \mid  B(m,d) \cap \TT(B(0,d))\ne \emptyset \} \, ,
  \end{equation*}
and setting  $\Pi(x,y)=(x,0)$, we have:

(a) $|\Pi m -\Pi m'|\geq C_0$ for all $m\not=m'$ in $\JJ'$.

(b) There exist $\phi'\in \FF(0,\CC^s, C_0, C_1)$,  and
diffeomorphisms $\TTT_m$, for $m\in \JJ'$, such that
  \begin{equation}\label{newphi}
  \TT^{-1}\circ \phi_m=\phi'\circ \TTT_m
  \quad \text{ on }
  \phi_m^{-1}( B(m,d) \cap \TT(B(0,d))) \, ,\, \, \forall m \in \JJ'\, .
  \end{equation}

(c) For each $m \in \JJ'$, we can write $\TTT_m=\Psi \circ
D^{-1}\circ \Psi_m$, where
\begin{itemize}
\item The diffeomorphism $\Psi_m$ is in
$D^1_{1+\alpha}(C)$, its  range contains  $B(\Pi
m,C_0^{1/2})$, and $\Psi_m( \phi_m^{-1}(B(m,d))) \subset
B(\Pi m, C_0^{1/2}/2)$.
\item The matrix $D$ is block diagonal, of the form $D=\left(\begin{smallmatrix} A&0\\0&B
\end{smallmatrix}\right)$ with
  \begin{equation*}
  |Av|\geq C^{-1}\lambda_u(M) |v| \text{ and } |Bv|\leq C\lambda_s(M) |v|\, .
  \end{equation*}
\item The diffeomorphism $\Psi$ is in $D^1_{1+\alpha}(C)$,
its range contains $B(0, C_0^{1/2})$.
\end{itemize}
\end{lemma}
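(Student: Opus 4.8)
The plan is to carry out the classical Hadamard–Perron graph transform, but bookkeeping the constants with the care needed to later verify the precise Hölder conditions \eqref{aalpha}--\eqref{beeta'} of Definition~\ref{deffol}. First I would normalize: since $\TT(0)=0$ and $M=D\TT(0)$ sends $\CC$ to $\tilde\CC$ compactly, write $\TT = M\circ(\id + R)$ where $R = M^{-1}\TT - \id$ has small $C^{1+\alpha}$ norm (controlled by $\epsilon$); this is just a reparametrization of the hypothesis $\norm{\TT^{-1}\circ M - \id}{C^{1+\alpha}}\le\epsilon$. Part~(a) is the easiest: a point $m\in\JJ'$ lies within distance $d$ of $\TT(B(0,d))$, hence within distance $\Lambda_u(M)\cdot d + O(1)$ of the origin in the unstable directions, but more to the point, $\TT^{-1}$ contracts the unstable directions of $\tilde\CC$ by at least $\lambda_u(M)^{-1}$, so the $\Pi$-projections of two such points, which are $\ge C$ apart, get pulled back to points that are still $\gtrsim C/\Lambda_u(M)$ apart — wait, that is the wrong direction; rather one argues that the preimages under $\TT$ of the balls $B(m,d)$ are separated by at least $C/\Lambda_u(M)$ in the stable-leaf quotient and, since they must all meet $B(0,d)$, choosing $C$ large compared to $\Lambda_u(M)\cdot C_0$ (which is permissible because $C$ is allowed to depend on everything) forces $|\Pi m - \Pi m'|\ge C_0$ after the straightening. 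I would make this precise by first constructing the charts $\TTT_m$ in part~(b) and reading off (a) from the explicit form in (c).

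For part~(b)--(c), fix $m\in\JJ'$ and set $\psi_m = \TT^{-1}\circ\phi_m$, a map defined on $\phi_m^{-1}(B(m,d)\cap\TT(B(0,d)))$. Its image is a $C^{1+\beta}$ manifold-chart whose stable leaves (images of $\R^{d_u}\times\{y\}$) have tangent spaces inside $\CC^s$, because $M^{-1}\tilde\CC^s\subset\CC^s_0\subset\mathrm{int}\,\CC^s$ and the perturbation $R$ together with the reparametrization $\phi_m$ moves things by only $O(\epsilon)$ and $O(1/C_1)$. I would then decompose $\psi_m$ as a composition straightening it into the standard form $\phi'(x,y)=(F'(x,y),y)$: write $\psi_m = \Pi'\circ M^{-1}\circ\phi_m$ corrected by the graph-transform fixed point. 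Concretely, the new holonomy $F'$ is obtained as the unique solution of the usual contraction equation coming from $\TT^{-1}\circ\phi_m(x,y) = \phi'(\TTT_m(x,y))$; the Banach fixed-point argument converges because the relevant Lipschitz constant is governed by $\lambda_s(M)\Lambda_u(M)\lambda_u(M)^{-1}$, which is $<\epsilon<1$ by \eqref{suffhyp}. Having produced $\phi'$, the change-of-variables map $\TTT_m$ is then forced by \eqref{newphi}, and one factors it as $\TTT_m=\Psi\circ D^{-1}\circ\Psi_m$ by peeling off: $\Psi_m$ absorbs the chart $\phi_m$ and puts the domain near $\Pi m$ with $D^1_{1+\alpha}(C)$ control (this uses that $\phi_m$, being in $\FF(m,\tilde\CC^s,C_0,C_1)$, is $C^{1+\beta}$ with the extra $C^{1+\alpha}$ regularity along leaves from \eqref{aalpha}, hence lies in $D^1_{1+\alpha}$ — cf.\ the remark after Definition~\ref{deffol}); $D^{-1}$ is the linear part $M$ restricted to the block-diagonal form after the compact-cone hypothesis makes the off-diagonal blocks controllable; and $\Psi$ absorbs $\phi'$ and the nonlinear remainder $R$, again landing in $D^1_{1+\alpha}(C)$ with range containing $B(0,C_0^{1/2})$. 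The domain-size claims ($\Psi_m$'s range contains $B(\Pi m, C_0^{1/2})$, etc.) follow from Lemma~\ref{lempropphi}-type image estimates for maps in $\DD(\Cs)$, together with choosing $C_0$ large enough that the expansion/contraction of $M$ (quantified by $\lambda_u(M)>\epsilon^{-1}$, $\lambda_s(M)^{-1}>\epsilon^{-1}$) still leaves a ball of radius $\sim C_0^{1/2}$ inside the image after pullback.

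The main obstacle — and the reason the lemma is stated so delicately — is \emph{not} proving $\phi'$ is $C^1$ (that is classical) but verifying that $\phi'$ lies in $\FF(0,\CC^s,C_0,C_1)$ with the \emph{same} constant $C_1$, i.e.\ that the quantitative Hölder inequalities \eqref{aalpha}, \eqref{beeta}, \eqref{beeta'} are reproduced. For \eqref{aalpha} one differentiates the fixed-point equation and estimates $|DF'(x,y)-DF'(x,y')|$: the gain comes precisely from $\lambda_s(M)^\alpha\Lambda_u(M)\lambda_u(M)^{-1}<\epsilon$ (the $\alpha$-exponent on $\lambda_s$ matching the $\alpha$-Hölder modulus in the $y$-direction), so that the transformed modulus is $\le$ (old modulus)$\times\epsilon + O(\epsilon) \le |y-y'|^\alpha/C_1$ once $\epsilon$ is small relative to $1/C_1$; this is the content of the schematic \eqref{heart}. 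The cross-condition \eqref{beeta'} is the genuinely new difficulty: it requires tracking a \emph{mixed} second difference, and its contraction factor is exactly $\lambda_s(M)^{\alpha-\beta}\Lambda_u(M)^{1+\beta}\lambda_u(M)^{-1}<\epsilon$ — the stronger bunching \eqref{suffhyp}. I expect this to be Step~3 of the argument and the place where most of the real work lies: one must show that the pair \eqref{beeta}--\eqref{beeta'}, though neither survives the graph transform alone, is jointly invariant, by estimating the mixed difference of $DF'$ in terms of both the pure $\beta$-difference and the mixed difference of $DF$, absorbing the error terms from the nonlinearity $R$ and from the reparametrizations $\Psi,\Psi_m$ into the $1/C_1$ budget. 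The order of operations, then, is: (1) normalize and set up the graph-transform contraction, getting $\phi'$ as a fixed point with $C^1$ bounds; (2) verify \eqref{aalpha} and $F'(x,y_0)=x$ (the latter by the choice of base height and the structure of $\TTT_m$); (3) verify \eqref{beeta} and \eqref{beeta'} using \eqref{suffhyp}; (4) extract the factorization $\TTT_m=\Psi\circ D^{-1}\circ\Psi_m$ with the stated $D^1_{1+\alpha}(C)$ membership and range/image bounds; (5) deduce separation (a) from the explicit form of $\Psi_m$.
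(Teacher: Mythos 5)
Your outline follows the right general philosophy (factor out the linear part, straighten the pulled-back foliation, use \eqref{suffhyp} to reproduce \eqref{aalpha}--\eqref{beeta'} with the $1/C_1$ budget), but it misses the point that makes the statement of Lemma~\ref{lemcompose} ``involved'' in the first place: the conclusion requires a \emph{single} chart $\phi'\in\FF(0,\CC^s,C_0,C_1)$ serving simultaneously all $m\in\JJ'$, whereas your construction fixes one $m$ and produces a straightened chart for that $m$ alone. Nothing in your proposal explains how the pullbacks of the different $\phi_m$ are merged into one admissible foliation at $0$. In the paper this is a separate step: after pushing each $\phi_m$ through the shear $Q$ one obtains local foliations $\phi^{(1)}_m$ based at the points $\Pi m$, and these are interpolated with the trivial vertical foliation by a bump function on the pairwise disjoint balls $B(\Pi m,C_0^{1/2})$ (disjointness is exactly assertion (a)), before the hyperbolic matrix $D^{-1}$ is applied; one must then check that the glued derivative still satisfies the $\KK$-type bounds, since the bump function destroys the inclusion of the leaves in the cone and this is recovered only after applying $D^{-1}$ and $(Q')^{-1}$. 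Without this gluing, the later use of the lemma (collapsing the sum over $\bm$ in the proof of Lemma~\ref{maintechlemma} via Lemma~\ref{lem:localization}, i.e.\ avoiding exponential proliferation of charts) breaks down, so the gap is not cosmetic.

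A second, related problem is your treatment of (a). You propose to take ``$C$ large compared to $\Lambda_u(M)\cdot C_0$'', but the lemma requires $C$ to depend only on $\CC$, $\tilde\CC$ and $C_0$, not on $M$: in the application $M$ is the differential of $T_\ii^n$ in charts, whose expansion grows with $n$, while the separation constant $C(C_0)$ must stay fixed so that the number $E$ of subfamilies $\ZZ^e$ is independent of $n$. The correct argument is softer and does not use the size of $\Lambda_u(M)$ at all: every $m\in\JJ'$ lies within distance $d$ of $\TT(B(0,d))$, and the image of this set under the shear $Q$ lies in a slab $\{|y|\le\Cs\}$ with $\Cs$ depending only on the cones (this is \eqref{piQm}), so $|Qm-\Pi m|\le\Cs$ and the separation of the $m$'s transfers to the projections $\Pi m$ up to an additive constant; in particular (a) is proved \emph{before} parts (b)--(c), as it must be, since the gluing needs it. Finally, note that the invertibility of the straightening maps (your ``contraction equation'') is where the convex transversality of the cones enters, via the estimate \eqref{useconv}; and no Banach fixed-point iteration is needed, since one only pulls back a given foliation once and reparametrizes, rather than constructing an invariant foliation.
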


Note that  (c) implies in particular that each $\TTT_m$ sends
stable leaves to stable leaves. Note also that if $C_0$ is
large enough, then $\phi' \in \FF(0, \CC^s, C_0, C_1)$ implies
$(\phi')^{-1}(B(0,d))\subset B(0, C_0^{1/2}/2)$ (because
$\|(\phi')^{-1}\|_{C^1} \le \Cs$ by Lemma ~ \ref{lempropphi}).

Statements (b) and (c) are the main result of the lemma: (b)
shows that the pullback of all the relevant charts $\phi_m$ can
be glued together to form an admissible chart $\phi'$, while
(c) gives an expression of $\TTT_m$, that is, $\TT^{-1}$ in the
charts $\phi_m$, $\phi'$, as the composition of two well
controlled diffeomorphisms $\Psi$, $\Psi_m$, and a matrix $D$
with good hyperbolic properties. Statement (a), although an
essential consequence of hyperbolicity, has a more technical
nature: It is used in Step 2 of the proof of the lemma (when
gluing foliations), and also later in the proof of Theorem ~
\ref{MainTheorembis}. At the first reading, the reader can
ignore the  information on the ranges of $\Psi$ and $\Psi_m$
(but beware that they will be important in the proof of Theorem
~ \ref{MainTheorembis}).

\begin{remark}\label{shorter}
Composing with translations, we deduce a more general result
from Lemma ~\ref{lemcompose}, replacing $0$ by $\ell \in
\real^d$, and allowing $\TT(\ell)\ne \ell$: Just replace $M$ by
$D\TT(\ell)$, the projection $\Pi$ by
$\Pi(x,y)=(x,y_{\TT(\ell)})$, where $\TT(\ell)=(x_{\TT(\ell)},
y_{\TT(\ell)})$, and assume that
  \begin{equation*}
  \norm{ (\TT^{-1} [\cdot + \TT(\ell)]-\ell) \circ D\TT(\ell)
  -\id }{C^{1+\alpha}} \leq \epsilon
  \end{equation*}
and that $D\TT(\ell)$ sends  $\CC $ to $\tilde\CC$ compactly.
One then uses the condition $B(m,d) \cap \TT(B(\ell,d))\ne
\emptyset$ to define $\JJ'$. Of course, $\phi'$ is then in
$\FF(\ell ,\CC^s, C_0, C_1)$, equality \eqref{newphi} holds on
$\phi_m^{-1}( B(m,d) \cap \TT(B(\ell,d)))$, and the range of
$\Psi$  contains $B(\ell, C_0^{1/2})$. Finally, we have
$(\phi')^{-1}(B(\ell,d))\subset B(\ell, C_0^{1/2}/2)$.
\end{remark}

\medskip

\begin{proof}
[Proof of Lemma~\ref{lemcompose}]
We shall write $\pi_1$ and $\pi_2$ for, respectively, the first
and the second projection in $\R^d=\R^{d_u}\times \R^{d_s}$.

\emph{Step zero: Preparations.} We shall write $\Cs$ and
$\epsilons$ for a large, respectively small, constant,
depending only on $\CC,\tilde\CC$, that may vary from line to
line. For the other parameters, we will always specify if they
depend on $C_0$ or $C_1$.

The set $M(\R^{d_u}\times \{0\})$ is contained in
$\tilde\CC^u$, hence uniformly transverse to $\{0\}\times
\R^{d_s}$. Therefore, it can be written as a graph $\{(x,Px)\}$
for some matrix $P$ with norm depending only on $\tilde\CC$.
Let $Q(x,y)=(x,y-Px)$, so that $QM$ sends $\R^{d_u}\times
\{0\}$ to itself. In the same way, $M^{-1}(\{0\}\times
\R^{d_s})$ is contained in $\CC^s$, hence it is a graph
$\{(P'y,y)\}$. Letting $Q'(x,y)=(x-P'y,y)$, the matrix
$D=QM(Q')^{-1}$ leaves $\R^{d_u}\times \{0\}$ and $\{0\}\times
\R^{d_s}$ invariant, i.e., it is block-diagonal, of the form $
\left(\begin{smallmatrix} A&0\\0&B
\end{smallmatrix}\right)$, and moreover
$|Av|\geq \Cs^{-1}\lambda_u |v|$ and $|Bv|\leq \Cs\lambda_s
|v|$ (since the matrices $Q$ and $Q'$, as well as their
inverses,  are uniformly bounded in terms of $\CC$ and
$\tilde\CC$).

We can readily prove  assertion (a) of the lemma. Let $m\in
\JJ'$, there exists $z \in B(m,d) \cap \TT(B(0,d))$. The set
$Q\TT(B(0,d))=DQ'(\TT^{-1}M)^{-1}(B(0,d))$ is included in
$\{(x,y)\st |y|\leq \Cs\}$ for some constant $\Cs$ (the role of
$Q$ is important here). Since $Qz\in Q\TT(B(0,d))$, we obtain
$|\pi_2(Q z)|\leq \Cs$. Since $|z-m|\leq d$, we also have
$|Qz-Qm|\leq \Cs$, hence $|\pi_2(Qm)|\leq \Cs$ (for a different
constant $\Cs$). Since $Qm-\Pi m=(x_m, \pi_2(Qm)) -
(x_m,0)=(0,\pi_2(Qm))$, we obtain
  \begin{equation}
  \label{piQm}
  |Qm-\Pi m|\leq \Cs\, .
  \end{equation}
Since the points $m\in \JJ'$ are far apart by assumption, the
points $Qm$ for $m\in \JJ'$ are also far apart, and it follows
that the points $\Pi m$ are also far apart. Increasing the
distance between points in $\JJ'$, we can in particular ensure
that $|\Pi m-\Pi m'|\geq C_0$ for any $m\not=m'\in \JJ'$,
proving (a).

The strategy of the proof of the rest of the lemma is the
following: We write
  \begin{equation}
  \TT^{-1}=\TT^{-1}M \cdot (Q')^{-1}\cdot D^{-1} \cdot Q \, .
  \end{equation}
We shall start from the partial foliation given by the maps
$\phi_m$ for $m\in \JJ$, apply $Q$ (Step ~1) to obtain a new
partial foliation at $Q m$, modify it via gluing (Step ~2) to
obtain a global foliation, and then push this foliation
successively with $D^{-1}$ (Step~ 3), $(Q')^{-1}$ (Step ~4), and
$\TT^{-1}M$ (last step).

We shall use in this proof the spaces of local diffeomorphisms
$\DD(\Cs)$ and of matrix-valued functions
$\KK(\Cs)=\KK^{\alpha,\beta}(\Cs)$ introduced in Appendix
\ref{app}. As in Remark~\ref{domdef} of this appendix, we will
write $\KK(\Cs,A)$ for the functions defined on a set $A$ and
satisfying the inequalities defining $\KK(\Cs)$ ($A$ will
sometimes be omitted when the domain of definition is obvious).
The map $\phi_m$ belongs to $\DD(\Cs)$ (see the proof of Lemma
~ \ref{lempropphi}), and the matrix-valued function $D\phi_m$
belongs to $\KK(\Cs, B(m, C_0))$ (boundedness of $D\phi_m$ is
proved in Lemma ~ \ref{lempropphi}, while the H\"{o}lder-like
properties are given by \eqref{aalpha}--\eqref{beeta'}).

\medskip

\emph{First step: Pushing the foliations with $Q$.} We
formulate in detail  the construction in this first step (a
version of Lemma~ \ref{lemcomposeQ} will be used also in the
last step, replacing $Q$ by $\TT^{-1} M$, while steps 2-3-4 are
much simpler).

\begin{lemma}
\label{lemcomposeQ} (Notation as in Lemma ~\ref{lemcompose} and Step 0 of its proof.)
There exists a constant $\Cs$ such that, if $C_0$ is large
enough and $C_1>2C_0$, for any $m=(x_m, y_m)\in \JJ'$ there
exist two maps $\phi_m^{(1)}: B(\Pi m, C_0^{1/2}) \to \R^d$ and
$\Psi_m:B(m,C_0^{2/3})\to \R^d$ such that
  \begin{equation*}
  \phi_m^{(1)}\circ \Psi_m=Q\circ \phi_m \text{ on } \phi_m^{-1}(B(m,d)) \, .
  \end{equation*}
Moreover, $\Psi_m$ is a diffeomorphism in $D^1_{1+\alpha}(\Cs)$
whose  range contains $B(\Pi m, C_0^{1/2})$, and
$\Psi_m(\phi_m^{-1}(B(m,d))) \subset B(\Pi m, C_0^{1/2}/2)$.
Finally, $\phi_m^{(1)}(x,y)=(F^{(1)}_{m}(x,y),y)$ on $B(\Pi m,
C_0^{1/2})$, with $F^{(1)}_m$ a $C^1$ map so that
$F^{(1)}_{m}(x,0)=x$ and $DF^{(1)}_m$ belongs to $\KK(\Cs,B(\Pi
m, C_0^{1/2}) )$.
\end{lemma}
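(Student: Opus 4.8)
The plan is to treat this as the linear instance of the graph transform: push the foliation straightened by $\phi_m(x,y)=(F(x,y),y)$ forward by the shear $Q$, then rewrite the image foliation in standard form $(F^{(1)}_m,\id)$, absorbing the nonstandard part into a reparametrisation $\Psi_m$ that preserves stable leaves. Since $\pi_1\circ Q=\pi_1$, one has $Q\circ\phi_m(x,y)=(F(x,y),\,y-PF(x,y))$. The conceptual point is that the image leaves are still transverse to $\R^{d_u}\times\{0\}$: a tangent vector to such a leaf is of the form $(\partial_yF(z)w,w)\in\tilde\CC^s$, and its $Q$-image has vanishing $y$-component only if $(\partial_yF(z)w,w)\in Q^{-1}(\R^{d_u}\times\{0\})=M(\R^{d_u}\times\{0\})\subset\tilde\CC^u_0$, hence lies in $\tilde\CC^s\cap\tilde\CC^u=\{0\}$ by convex transversality of the extended cone. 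As $\partial_yF(z)$ ranges over the compact set of matrices $N$ with $(Nw,w)\in\tilde\CC^s$ for all $w$ (depending only on $\tilde\CC^s$), a compactness argument upgrades this to uniform bounds: both $I-P\partial_yF$ and $I-\partial_yF\,P$ are invertible, with inverses bounded by $\Cs$.

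Granting this, I would build everything explicitly. Set $\eta(x,y)=\pi_2(Q\circ\phi_m(x,y))=y-PF(x,y)$; for each fixed $x$ the $y$-map $y\mapsto\eta(x,y)$ has derivative and inverse derivative bounded by $\Cs$. Working on a ball of radius of order $C_0^{2/3}$ about $m$ --- small enough, given only $C_1>2C_0$, that \eqref{aalpha} forces $DF$ to deviate from $DF(m)$ by $o(1)$ as $C_0\to\infty$ --- this $y$-map is a small $C^1$ perturbation of an invertible affine map, hence in $\DD(\Cs)$ by Lemma~\ref{belongsDD}; let $y=H(x,\eta)$ be its inverse. Put $\psi(x)=F(x,H(x,0))$, the holonomy of the image foliation along the leaf through $x$ back down to height $0$, and then define $\Psi_m(x,y)=(\psi(x),\eta(x,y))$ and $\phi^{(1)}_m=(F^{(1)}_m,\id)$ with $F^{(1)}_m(u,v)=F(\psi^{-1}(u),H(\psi^{-1}(u),v))$. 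By substitution $\phi^{(1)}_m\circ\Psi_m=Q\circ\phi_m$, the normalisation $F^{(1)}_m(u,0)=u$ holds by the choice of $\psi$, and $\Psi_m$ sends stable leaves to stable leaves since its first component depends only on $x$.

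It remains to verify the quantitative claims. Differentiating the implicit relations defining $H$ and $\psi$ gives $\psi'=(I-\partial_yF\,P)^{-1}\partial_xF$ (evaluated along $y=H(x,0)$), which by the first paragraph and $\partial_xF\approx\id$ on the working scale is invertible with $\Cs$-bounded inverse and nearly constant; thus $\psi\in\DD(\Cs)$. Reading off the block-triangular $D\Psi_m$ then yields $\Psi_m\in D^1_{1+\alpha}(\Cs)$, the $\alpha$-Hölder dependence on $y$ coming from \eqref{aalpha} (constant $1/C_1\le1$). That $DF^{(1)}_m\in\KK(\Cs,B(\Pi m,C_0^{1/2}))$ follows from $D\phi_m\in\KK(\Cs,B(m,C_0))$ (Step~0) via the composition estimates for $\KK$ in Appendix~\ref{app}, with \eqref{beeta} and \eqref{beeta'} feeding the $\beta$- and mixed-Hölder parts and using that $\psi^{-1}$ and $H$ have $\Cs$-bounded, $\beta$-Hölder derivatives. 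For the domain claims, $\phi_m^{-1}(B(m,d))$ has diameter $O(d)$ about $m$, so $\Psi_m$ maps it into a $\Cs$-neighbourhood of $\Psi_m(m)$, which lies within $\Cs$ of $\Pi m$ by \eqref{piQm}, hence inside $B(\Pi m,C_0^{1/2}/2)$ for $C_0$ large; and since $\Psi_m$ is a small perturbation of a fixed invertible linear map on $B(m,C_0^{2/3})$, Lemma~\ref{DDbigimage} makes its image contain $B(\Psi_m(m),\Cs^{-1}C_0^{2/3})\supset B(\Pi m,C_0^{1/2})$.

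The main obstacle is promoting the pointwise, derivative-level facts --- invertibility of $I-P\partial_yF$ from the cone intersection, and $Q\circ\phi_m$ being nearly affine --- to genuine diffeomorphisms with uniformly bounded inverses whose images contain the prescribed balls, uniformly in $C_1>2C_0$. This is exactly why one must work on the intermediate scale $C_0^{2/3}$ rather than on the full chart $B(m,C_0)$, and why the proof is largely an exercise in the nested hierarchy $d\ll C_0^{1/2}\ll C_0^{2/3}\ll C_0\le C_1/2$: checking that every auxiliary map ($\eta(x,\cdot)$, $H$, $\psi$, $\Psi_m$, $\phi_m^{(1)}$) is defined wherever it gets evaluated and lands wherever it must is the real content.
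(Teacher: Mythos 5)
Your construction is in substance the paper's own: your $\Psi_m(x,y)=(\psi(x),\eta(x,y))$ is exactly the map $(L^{(1)}(x),L_x(y))=(\Gamma^{(0)}\circ\Gamma^{(1)})^{-1}$ of the paper's proof (with $\eta(x,\cdot)=L_x$ and $\psi=L^{(1)}$), $\phi_m^{(1)}=Q\circ\phi_m\circ\Psi_m^{-1}$ is the same straightened foliation, the localization $\Psi_m(\phi_m^{-1}(B(m,d)))\subset B(\Pi m,C_0^{1/2}/2)$ is obtained from \eqref{piQm} and Lipschitz bounds as in the paper, and the membership $DF_m^{(1)}\in\KK$ is obtained from the same appendix calculus (Propositions \ref{KKinvmult} and \ref{KK_invcomp}) applied to the factors of $D\phi_m^{(1)}$. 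The one genuinely different step is how you invert $L_x$ with a uniform lower bound. The paper proves \eqref{ellx_diffeo} on the \emph{whole} chart $B(m,C_0)$ by the integral argument \eqref{useconv}: convex transversality of $\tilde\CC^s$ forces the averaged vector $(\int\partial_yF\,w,w)$ back into $\tilde\CC^s$, and this is precisely the place in the paper where the convexity hypothesis on cones is used. You instead use only the pointwise cone condition (invertibility of $I-P\partial_yF(z)$, uniformly by compactness) together with the observation that \eqref{aalpha}--\eqref{beeta} and $C_1>2C_0$ make $DF$ vary by $o(1)$ on a ball of radius comparable to $C_0^{2/3}$, so that $\eta(x,\cdot)$, $\psi$ and $\Psi_m$ are small perturbations of fixed invertible affine maps there. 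This is legitimate and suffices for the statement, since the lemma only concerns $B(m,C_0^{2/3})$ and $B(\Pi m,C_0^{1/2})$; what it buys is that convexity of the cones is not invoked at this point, at the price of losing the estimate on the full ball $B(m,C_0)$ that the paper's argument provides uniformly in $C_0,C_1$. The only place you should tighten is the phrase ``a ball of radius of order $C_0^{2/3}$'': since $|\eta(x,y_m)|=|y_m-Px|$ can be as large as $\Cs+|P|\,C_0^{2/3}$, the working ball in $y$ must be $K C_0^{2/3}$ with $K$ a cone-dependent constant (and likewise when you evaluate $H(\psi^{-1}(u),v)$ for $|v|\le C_0^{1/2}$), so that $H(x,0)$, hence $\psi$ and $\phi_m^{(1)}$, are defined wherever they are evaluated before you restrict $\Psi_m$ to $B(m,C_0^{2/3})$; with that choice the $o(1)$ oscillation of $DF$ still holds and the rest of your bookkeeping (Lemmas \ref{belongsDD} and \ref{DDbigimage}, the bound $|\Psi_m(m)-\Pi m|\le\Cs$, the $\KK$ estimates) goes through as you indicate.
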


Note that if $\EE$ is the foliation given by
$\phi_m(x,y)=(F_m(x,y),y)$, then by definition $\phi_m^{(1)}$
sends the stable leaves of $\R^d$ to the foliation $Q(\EE)$,
i.e., $\phi_m^{(1)}$ is the standard parametrization of the
foliation $Q(\EE)$.

\begin{proof}[Proof of Lemma ~\ref{lemcomposeQ}]
Fix $m=(x_m,y_m) \in \JJ'$. The map $Q\circ \phi_m$ does not
qualify as $\phi_m^{(1)}$ for two reasons. First, $\pi_2 \circ
Q \circ \phi_m(x,y)$ is generally not equal to $y$. Second,
$\pi_1 \circ Q\circ \phi_m(x,0)$ is generally not equal to $x$.
We shall use two maps $\Gamma^{(0)}$ and $\Gamma^{(1)}$ (sending
stable leaves to stable leaves) to compensate for these two
problems. The map $\Gamma^{(0)}$ will have the form
$\Gamma^{(0)}(x,y)=(x,G(x,y))$ where for fixed $x$, the map
$y\mapsto G(x,y)$ will be a diffeomorphism of the vertical leaf
$\{x\}\times \R^{d_s}$, so that $\pi_2 \circ Q \circ \phi_m
\circ \Gamma^{(0)} (x,y)=y$. In particular, $Q \circ
\phi_m\circ \Gamma^{(0)}(x,0)$ is of the form $(L^{(1)}(x),0)$,
for some map $L^{(1)}$. Choosing
$\Gamma^{(1)}(x,y)=((L^{(1)})^{-1}(x),y)$ solves our second
problem: the map
$$\phi_m^{(1)}\coloneqq Q \circ \phi_m
\circ \Gamma^{(0)} \circ \Gamma^{(1)}
$$ satisfies both
$\pi_2\circ \phi_m^{(1)}(x,y)=y$ and $\pi_1 \circ
\phi_m^{(1)}(x,0)=x$, as desired. Then, the map
$\Psi_m=(\Gamma^{(0)}\circ \Gamma^{(1)})^{-1}$
sends
stable leaves to stable leaves and $Q\circ \phi_m =
\phi_m^{(1)} \circ \Psi_m$.

We shall now be more precise, justifying the existence of the
maps mentioned above, and estimating their domain of
definition, their range and their smoothness.

\emph{The map $\Gamma^{(0)}$.} For fixed $x$, the map $y\mapsto
G(x,y)$ should satisfy $\pi_2 \circ Q \circ
\phi_m(x,G(x,y))=y$, i.e., it should be the inverse to the map
  \begin{equation}
  \label{def_Lx}
  L_x : y\mapsto \pi_2 \circ Q \circ \phi_m(x,y)=y-PF_m(x,y)\, ,
  \end{equation}
where we denote $\phi_m(x,y)=(F_m(x,y),y)$. We claim that this
map is invertible onto its image, and that there exists
$\epsilons^0>0$  such that
  \begin{equation}
  \label{ellx_diffeo}
  |L_x(y')-L_x(y)|\geq \epsilons^0 |y'-y|\, ,
  \quad \forall x\in B(x_m,C_0) \, , \quad \forall y,y'\in B(y_m,C_0)
  \, .
  \end{equation}
Indeed, fix $x \in B(x_m, C_0)$ and let $w=y'-y$. Writing
$F(y)=F_m(x,y)$, we have
\begin{equation}
  \label{jlqsmfdlkjqmdslf}
  L_x(y')-L_x(y)  =w -P \int_{t=0}^1 \partial_y F(y+tw) w \dd t \, .
  \end{equation}
Each vector $(\partial_y F(y+tw)w,w)$ belongs to $\tilde\CC^s$.
Since this cone is convexly transverse to $\R^{d_u}\times
\{0\}$, the set $\tilde\CC^s \cap (\R^{d_u} \times \{w\})$ is
convex, hence
  \begin{equation}
  \label{useconv}
  v_1\coloneqq
  \left(\int_{t=0}^1 \partial_y F(y+tw) w \dd t, w\right)
  \in \tilde \CC^s\, .
  \end{equation}
On the other hand, since the graph of $P$ is included in
$\tilde \CC^u$, $v_2\coloneqq (\int_{t=0}^1 \partial_y F(y+tw)
w \dd t, P \int_{t=0}^1 \partial_y F(y+tw) w \dd t)$ belongs to
$\tilde\CC^u$. Let $\epsilons^0>0$ be such that
$B(v,\epsilons^0|v|) \cap \tilde\CC^u =\emptyset$ for any $v\in
\tilde\CC^s -\{0\}$. Since $v_1 \in \tilde \CC^s$ and $v_2 \in
\tilde \CC^u$, we get $|v_1-v_2|\geq \epsilons^0 |v_1|$. As
$v_1$ and $v_2$ have the same first component, this gives
$|\pi_2(v_1)-\pi_2(v_2)|\geq \epsilons^0|v_1|$, i.e.,
  \begin{equation*}
  \left|w - P \int_{t=0}^1 \partial_y F(y+tw) w \dd t\right|\geq \epsilons^0 |w|\, ,
  \end{equation*}
which implies \eqref{ellx_diffeo} by \eqref{jlqsmfdlkjqmdslf}.

The map $\Lambda^{(0)}:(x,y)\mapsto (x,L_x(y))$ is well defined
on $B( m, C_0)$, its derivative is bounded by a constant $\Cs$,
and its second component satisfies \eqref{ellx_diffeo}. Lemma
~\ref{belongsDD} (with $x$ and $y$ exchanged) shows that
$\Lambda^{(0)} \in \DD(\Cs)$ for some constant $\Cs$. In
particular, $\Lambda^{(0)}$ admits an inverse $\Gamma^{(0)}$,
which also belongs to $\DD(\Cs)$.

By Lemma \ref{DDbigimage}, the range of $\Lambda^{(0)}$ (which
coincides with the domain of definition of $\Gamma^{(0)}$)
contains the ball $B( \Lambda^{(0)}(m), C_0/\Cs)$. Moreover,
$\Lambda^{(0)}(m)=Q m$. By \eqref{piQm}, we have $|Qm - \Pi
m|\leq \Cs$, hence the domain of definition of $\Gamma^{(0)}$
contains $B( \Pi m, C_0/\Cs -\Cs)$. If $C_0$ is large enough,
this contains $B(\Pi m, C_0^{2/3})$.

\emph{The map $\Gamma^{(1)}$.} Consider $\phi_m^{(0)}\coloneqq
Q\circ \phi_m \circ \Gamma^{(0)}$. It is a composition of maps
in $\DD(\Cs)$, hence it also belongs to $\DD(\Cs)$. Moreover,
its restriction to $\R^{d_u}\times \{0\}$ has the form
$(x,0)\mapsto (L^{(1)}(x),0)$. It follows that the map
$L^{(1)}$ (defined on a subset of $\R^{d_u}$) also satisfies
the inequalities defining $\DD(\Cs)$. In particular, it is
invertible, and we may define
$\Gamma^{(1)}(x,y)=((L^{(1)})^{-1}(x),y)$. This map belongs to
$\DD(\Cs)$. By construction, $\phi_m^{(1)} \coloneqq Q \circ
\phi_m \circ \Gamma^{(0)} \circ \Gamma^{(1)}$ can be written as
$(F_m^{(1)}(x,y),y)$ with $F_m^{(1)}(x,0)=x$.

We have $\phi_m^{(0)}(Q m)=Q m$. Since $|\Pi m-Qm| \leq \Cs$ by
\eqref{piQm}, and $\phi_m^{(0)}$ is Lipschitz, we obtain
$|\phi^{(0)}_m(\Pi m)-\Pi m|\leq \Cs$, i.e.,
$|L^{(1)}(x_m)-x_m|\leq \Cs$. Since $L^{(1)}\in \DD(\Cs)$,
Lemma~\ref{DDbigimage} shows that $L^{(1)}(B(x_m,C_0^{2/3}))$
contains the ball $B(x_m,C_0^{2/3}/\Cs-\Cs)$. Therefore, it
contains the ball $B(x_m,C_0^{1/2})$ if $C_0$ is large enough.
Hence, the domain of definition of the map $\Gamma^{(1)}$
contains $B(\Pi m, C_0^{1/2})$. This shows that $\phi_m^{(1)}$
is defined on $B(\Pi m, C_0^{1/2})$.

\emph{The map $\Psi_m$.} We can now define
$\Psi_m=(\Gamma^{(0)}\circ
\Gamma^{(1)})^{-1}=(L^{(1)}(x),L_x(y))$, so that $Q\circ
\phi_m=\phi_m^{(1)} \circ \Psi_m$. We have seen that $\Psi_m\in
\DD(\Cs)$, hence $D\Psi_m$ and $D\Psi_m^{-1}$ are uniformly
bounded. To show that $\Psi_m \in D^1_{1+\alpha}(\Cs)$, we
should check that $|D\Psi_m(x,y)-D\Psi_m(x,y')| \leq
\Cs|y-y'|^\alpha$. This follows directly from the construction
and the corresponding inequality \eqref{aalpha} for $DF_m$.
Finally, since $\Psi_m\in \DD(\Cs)$,
  \begin{equation*}
  \Psi_m( \phi_m^{-1}(B(m,d))) \subset \Psi_m(B(m, \Cs))
  \subset B( \Psi_m(m), \Cs)\, .
  \end{equation*}
Since $Q m=\phi_m^{(1)}(\Psi_m(m))$ and $\Pi m=\phi_m^{(1)}(\Pi
m)$, we get $|\Psi_m(m) -\Pi m|\leq \Cs |Qm-\Pi m|\leq \Cs$ by
\eqref{piQm}. Therefore, $\Psi_m( \phi_m^{-1}(B(m,d))) \subset
B( \Pi m, \Cs)$, and this last set is included in $B(\Pi m,
C_0^{1/2}/2)$ if $C_0$ is large enough.

\emph{The regularity of $DF_m^{(1)}$.} To finish the proof, we
should prove that $DF_m^{(1)}$ satisfies the
bounds defining $\KK(\Cs)$, for some constant
$\Cs$ independent of $C_0$. Since $\phi_m^{(1)}=Q\circ \phi_m
\circ \Gamma^{(0)}\circ \Gamma^{(1)}$, we have
  \begin{equation}
  \label{eqdphim1}
  D \phi_m^{(1)}=
  ( DQ \circ \phi_m \circ \Gamma^{(0)}\circ \Gamma^{(1)})
  \cdot (D\phi_m\circ \Gamma^{(0)}\circ \Gamma^{(1)})
  \cdot( D\Gamma^{(0)} \circ \Gamma^{(1)})
  \cdot D\Gamma^{(1)}\, .
  \end{equation}
Since $\KK$ is invariant under multiplication (Proposition~
\ref{KKinvmult}), and under composition by Lipschitz maps
sending stable leaves to stable leaves (Proposition~
\ref{KK_invcomp}), it is sufficient to show that $D\phi_m$,
$D\Gamma^{(0)}$, and $D\Gamma^{(1)}$ all satisfy the bounds
defining $\KK(\Cs)$.
For $D\phi_m$, this follows from our assumptions
(note that this is where \eqref{beeta}--\eqref{beeta'} are used).

Since $\Gamma^{(0)}=(\Lambda^{(0)})^{-1}$, we have
$D\Gamma^{(0)}=(D\Lambda^{(0)})^{-1}\circ \Gamma^{(0)}$. Since
$D\Lambda^{(0)}$ is expressed in terms of $DF_m$, it belongs to
$\KK$. As $\KK$ is invariant under inversion (Proposition
\ref{KKinvmult}) and composition, we obtain $D\Gamma^{(0)} \in
\KK(\Cs)$.

Since $D\phi_m^{(1)}(x,0)=\id$, it follows from
\eqref{eqdphim1} that, on the set $\{(x,0)\}$, $D\Gamma^{(1)}$
is the inverse of the restriction of a function in $\KK$, and in
particular $D\Gamma^{(1)}(x,0)$ is a $\beta$-H\"{o}lder continuous
function of $x$, by \eqref{KK_holder}. Since
$D\Gamma^{(1)}(x,y)$ only depends on $x$, it follows that
$D\Gamma^{(1)}$ belongs to $\KK$. This concludes the proof of
Lemma ~\ref{lemcomposeQ}.
\end{proof}

\smallskip

We return to the proof of Lemma~\ref{lemcompose}:

\emph{Second step: Gluing the foliations $\phi^{(1)}_{m}$
together.}

Let $\gamma(x,y)$ be a $C^\infty$ function equal to $1$ on the
ball $B(C_0^{1/2}/2)$, vanishing outside of $B(C_0^{1/2})$. Let
$\phi^{(1)}_{m}(x,y)=(F^{(1)}_m(x,y),y)$ be a foliation defined
by Lemma~ \ref{lemcomposeQ}, and put
  \begin{equation}
  \phi^{(2)}_{m}(x,y)=(\gamma(x-x_m,y)(F^{(1)}_m(x,y)-x)+x, y) \, .
  \end{equation}
Then $\phi^{(2)}_{m}$ defines a foliation on the ball of radius
$C_0^{1/2}$ around $\Pi m$, coinciding with $\phi^{(1)}_{m}$ on
$B(\Pi m,C_0^{1/2}/2)$, and $\phi^{(2)}_{m}$ is equal to the
identity on the boundary of $B(\Pi m,C_0^{1/2})$. By
construction, $\phi^{(2)}_m(x,y)=(F^{(2)}_m(x,y),y)$ with
$F^{(2)}_m(x,0)=x$. Moreover, $DF^{(2)}_m$ is expressed in
terms of $\gamma$, $D\gamma$, $F_m^{(1)}$ and $DF_m^{(1)}$. All
those functions belong to $\KK(\Cs)$ (the first three functions
are Lipschitz and bounded, hence in $\KK(\Cs)$, while we proved
in Lemma ~\ref{lemcomposeQ} that $DF_m^{(1)}\in \KK(\Cs)$).
Therefore, $DF^{(2)}_m \in \KK(\Cs)$ by Proposition
~\ref{KKinvmult}.

We proved in (a) that the balls $B(\Pi m, C_0^{1/2})$ for
$m\in\JJ'$ are disjoint, therefore all those foliations can be
glued together (with the trivial vertical foliation outside of
$\bigcup_{m\in \JJ'} B(\Pi m,C_0^{1/2})$), to get a single
foliation parameterized by $\phi^{(2)}:\R^d \to \R^d$. We
emphasize that this new foliation is not necessarily contained
in the cone $Q( \tilde\CC^s)$, since the function $\gamma$
contributes to the derivative of $\phi^{(2)}$. Nevertheless, it
is uniformly transverse to the direction $\R^{d_u}\times\{0\}$,
and this will be sufficient for our purposes. Let us write
$\phi^{(2)}(x,y)=(F^{(2)}(x,y),y)$, where $F^{(2)}$ coincides
everywhere with a function $F_m^{(2)}$ or with the function
$(x,y)\mapsto x$. Since all the derivatives of those functions
belong to $\KK(\Cs)$, it follows that $DF^{(2)}\in \KK(\Cs)$
(for some other constant $\Cs$, worse than the previous one due
to the gluing). Since we will need to reuse this last constant,
let us denote it by $\Cs^{(0)}$.

\medskip

\emph{Third step: Pushing the  foliation $\phi^{(2)}$ with
$D^{-1}$.} This step is very simple, although this is where
\eqref{suffhyp} is needed: Define a new foliation by
  \begin{equation}
  \label{heart}
  F^{(3)}(x,y)=A^{-1}F^{(2)}(Ax, By), \quad \phi^{(3)}(x,y)=(F^{(3)}(x,y),y) \, ,
  \end{equation}
so that $D^{-1}\phi^{(2)}=\phi^{(3)}D^{-1}$. The map $F^{(3)}$
satisfies $F^{(3)}(x,0)=x$. Moreover
  \begin{align*}
  \partial_x F^{(3)}(x,y)=A^{-1}(\partial_x F^{(2)})(Ax, By) A\,,\quad
  \partial_y F^{(3)}(x,y)=A^{-1}(\partial_y F^{(2)})(Ax, By) B\, .
  \end{align*}
In particular, if $|A^{-1}|$ and $|B|$ are small enough (which
can be ensured by decreasing $\epsilon$ in \eqref{suffhyp}), we
can make $\partial_y F^{(3)}$ arbitrarily small. Since $|B|\leq
1 \leq |A|$, it also follows that
  \begin{equation}
  \label{heart'}
  \begin{split}
  \raisetag{-30pt}
  |D F^{(3)}(x,y)-D F^{(3)}(x,y')| &\leq |A^{-1}| |A| |DF^{(2)}(Ax,By)-DF^{(2)}(Ax,By')|
  \\
  &\!\!\!\!\!\leq |A^{-1}||A| \Cs^{(0)} |By-By'|^\alpha
  \leq |A^{-1}| |A| \Cs^{(0)} |B|^\alpha |y-y'|^\alpha \, .
  \end{split}
  \end{equation}
In the same way,
  \begin{equation}
  \label{heart''}
  \begin{split}
  |DF^{(3)}(x,y)-D&F^{(3)}(x,y')-DF^{(3)}(x',y)+DF^{(3)}(x',y')|
  \\&
  \leq |A^{-1}| |A| |DF^{(2)}(Ax,By)-DF^{(3)}(Ax,By')
  \\&\quad\quad \quad\quad\quad\quad\quad
  -DF^{(3)}(Ax',By)+DF^{(3)}(Ax',By')|
  \\&
  \leq |A^{-1}| |A| \Cs^{(0)} |Ax-Ax'|^\beta |By-By'|^{\alpha-\beta}
  \\&
  \leq |A^{-1}| |A| \Cs^{(0)} |A|^\beta |B|^{\alpha-\beta} |x-x'|^\beta |y-y'|^{\alpha-\beta}
  \, .
  \end{split}
  \end{equation}
If the bunching constant $\epsilon$ in \eqref{suffhyp} is small
enough (depending on $C_1$), we can ensure that the two last
equations are bounded, respectively, by $|y-y'|^\alpha/(2C_1)$
and $|x-x'|^\beta |y-y'|^{\alpha-\beta} / (4C_0^2C_1)$, i.e.,
the map $F^{(3)}$ satisfies the requirements \eqref{aalpha} and
\eqref{beeta'} for admissible foliations, with better constants
that will be useful below.

Taking $y'=0$ in \eqref{heart''}, we obtain
  \begin{equation*}
  |DF^{(3)}(x,y) - DF^{(3)}(x',y)|
  \leq |x-x'|^\beta |y|^{\alpha-\beta} / (4C_0^2C_1)+ |DF^{(3)}(x,0) - DF^{(3)}(x',0)|.
  \end{equation*}
Moreover, $\partial_x F^{(3)}(x,0)=\partial_x
F^{(3)}(x',0)=\id$, so that
  \begin{align*}
  |DF^{(3)}(x,0) - DF^{(3)}(x',0)|
  &
  =|\partial_y F^{(3)}(x,0)-\partial_y F^{(3)}(x',0)|
  \\&
  \leq |A^{-1}| |B| |\partial_y F^{(2)}(Ax,0) -\partial_y F^{(2)}(Ax',0)|
  \\&
  \leq |A^{-1}| |B| \Cs^{(0)} |Ax-Ax'|^\beta
  \\&
  \leq |A^{-1}| |B| \Cs^{(0)} |A|^\beta |x-x'|^\beta\, .
  \end{align*}
The quantity $|A^{-1}| |B| |A|^\beta$ is bounded by $\Cs
\lambda_u^{-1} \lambda_s \Lambda_u^\beta$. Choosing $\epsilon$
small enough in \eqref{suffhyp}, it can be made arbitrarily
small. For $|y|\leq C_0^2$, this yields
  \begin{equation}
  |DF^{(3)}(x,y) - DF^{(3)}(x',y)| \leq |x-x'|^\beta/(2C_1)\, ,
  \end{equation}
which is a small reinforcement of \eqref{beeta}.

\medskip

\emph{Fourth step: Pushing the foliation $\phi^{(3)}$ with
$(Q')^{-1}$.} Define a map $F^{(4)}(x,y)=F^{(3)}(x,y)+P'y$, and
let $\phi^{(4)}(x,y)=(F^{(4)}(x,y),y)$. The corresponding
foliation is the image of $\phi^{(3)}$ under $(Q')^{-1}$. Let
us fix a cone $\CC^s_1$ which sits compactly  between $\CC^s_0$
and $\CC^s$. Since the graph $\{(P'y,y)\}$ is contained in
$\CC^s_0$, the foliation $F^{(4)}$ is contained in $\CC^s_1$ if
$\partial_y F^{(3)}$ is everywhere small enough. Moreover, the
bounds of the previous step concerning $DF^{(3)}$ directly
translate into the following bounds for $DF^{(4)}$, for all
$x,x'\in \R^{d_u}$ and all $y,y'\in B(0,C_0^2)$:
  \begin{align}
  \label{controleF4}
  &| DF^{(4)}(x,y) -DF^{(4)}(x,y') | \leq |y-y'| ^\alpha/ (2C_1)\, ,
  \\ \label{controleF4'}
  &|DF^{(4)}(x,y)- DF^{(4)}(x',y)|\leq  |x-x'|^\beta /(4C_0^2C_1)\, ,
  \\ \label{controleF4''}
  \begin{split}
  &| D F^{(4)}(x,y)- DF^{(4)} (x,y')-DF^{(4)}(x',y)+DF^{(4)}(x',y') |
  \\& \hphantom{| DF^{(4)}(x,y) -DF^{(4)}(x,y') |}
  \leq |x-x'|^\beta |y-y'|^{\alpha-\beta}/(2C_1)  \, .
  \end{split}
  \end{align}
In particular, since $\partial_x F^{(4)}(x,0)=\id$, the bound
\eqref{controleF4} implies that $\partial_x F^{(4)}$ is bounded
and has a bounded inverse on a ball of radius $C_1\geq 2C_0$.

\medskip

\emph{Last step: Pushing the foliation $\phi^{(4)}$ with
$\TT^{-1}M$.} Let $\UU = \TT^{-1}M$, and consider $\phi'$ the
foliation obtained by pushing the foliation $\phi^{(4)}$ with
$\UU$. We claim that $\phi'$ belongs to $\FF(0,\CC^s, C_0,
C_1)$, and that we can write $\UU\circ \phi = \phi' \circ
\Psi'$ for some $\Psi'\in D_1^{1+\alpha}(\Cs)$.

To prove this, we follow the arguments in the proof of Lemma
~\ref{lemcomposeQ} (with simplifications here since $\UU$ is
close to the identity). First, fix $x$ and consider the map
$L_x : y\mapsto \pi_2\circ \UU \circ \phi^{(4)}(x,y)$. Writing
$\UU=\id+\VV$ where $\norm{\VV}{C^{1+\alpha}}\leq \epsilon$, we
have $L_x(y)= y + \pi_2\circ V (F^{(4)}(x,y),y)$. Since
$F^{(4)}$ is bounded in $C^1$ on the ball $B(0,2C_1)$, it
follows that, if $\epsilon$ is small enough, then the
restriction of $L_x$ to the ball $B(0,2C_1)$ (in $\R^{d_s}$) is
arbitrarily close to the identity. Therefore, its inverse is
well defined, and we can set $\Gamma^{(0)}(x,y)=(x,
L_x^{-1}(y))$. By construction, the map $\UU\circ
\phi^{(4)}\circ \Gamma^{(0)}(x,y)$ has the form $(L^{(1)}(x),
y)$ for some function $L^{(1)}$, which is bounded in
$C^{1+\alpha}$ and arbitrarily close to the identity in $C^{1}$
if $\epsilon$ is small. Let
$\Gamma^{(1)}(x,y)=((L^{(1)})^{-1}(x), y)$, then the map
$\phi'\coloneqq \UU \circ \phi^{(4)}\circ \Gamma^{(0)}\circ
\Gamma^{(1)}$ is defined on the set $\{(x,y) \st |y|\leq C_1\}$
(which contains $B(0,C_0)$), and it takes the form
$\phi'(x,y)=(F'(x,y), y)$ for some function $F'$ with
$F'(x,0)=0$.

Since $\phi'$ is obtained by composing $\phi^{(4)}$ with
diffeomorphisms arbitrarily close to the identity, it follows
from \eqref{controleF4}--\eqref{controleF4''} that $F'$
satisfies \eqref{aalpha}--\eqref{beeta'}. Moreover, since
$(\partial_y F^{(4)}(z)w,w)$ takes its values in the cone
$\CC^s_1$, it follows that $(\partial_y F'(z)w,w)$ lies in the
cone $\CC^s$ if $\UU$ is close enough to the identity. Hence,
the foliation defined by $\phi'$ is contained in $\CC^s$. This
shows that $\phi'$ belongs to $\FF(0,\CC^s, C_0, C_1)$.

Finally, the function $\Psi = (\Gamma^{(0)}\circ
\Gamma^{(1)})^{-1}$ belongs to $D_1^{1+\alpha}(\Cs)$. This
concludes the proof of Lemma~\ref{lemcompose}.
\end{proof}

\begin{remark}
\label{remOKgrand}
An inspection of the proof of Lemma~\ref{lemcompose} shows that
one can obtain stronger conclusions: For any $C'>0$, one can
ensure that the final chart $\phi'$ is defined on a ball of
radius $C'$, and satisfies $|D\phi'(x,y)-D\phi'(x,y')|\leq
|y-y'|^\alpha/C'$, as follows. If the bunching and
hyperbolicity conditions in \eqref{suffhyp} are large enough,
the third step of the proof yields a chart $\phi^{(3)}$ with $
|D F^{(3)}(x,y)-D F^{(3)}(x,y')| \leq \delta |y-y'|^\alpha$ for
arbitrarily small $\delta>0$. Hence, in the inequality
\eqref{controleF4} regarding the map $F^{(4)}$ (which is
defined on $\real^d$), the constant $2 C_1$ can be replaced
with an arbitrarily large constant, allowing an arbitrarily
large domain of definition for $\phi'$. The same observation
holds for \eqref{controleF4'} and \eqref{controleF4''}. This
remark is the key to the proof of Proposition~\ref{propinvar}.
\end{remark}

\section{Results on the local spaces \texorpdfstring{$H_p^{t,s}$}{Hpts}.}
\label{sec:local}

\subsection{Basic facts on the local spaces \texorpdfstring{$H_p^{t,s}$}{Hpts}.}

\label{basicc}

We start with  reminders from \cite{baladi_gouezel_piecewise}.

The proof of Lemma 22 from \cite{baladi_gouezel_piecewise}
implies the following:

\begin{lemma}
\label{Leib} Let $t>0$, $s<0$ and $\tilde \alpha>0$ be real numbers
with $t+|s|<\tilde \alpha$. For any $p \in (1,\infty)$, there exists a
constant $\Cs$ such that for any $C^{\tilde \alpha}$ function $g :
\real^d \to \complex$,
  \begin{equation*}
  \norm{ g \cdot \omega}{H_p^{t,s}}\le \Cs
  \|g\|_{C^{\tilde \alpha}} \norm{\omega}{H_p^{t,s}}\, .
  \end{equation*}
\end{lemma}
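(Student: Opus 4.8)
The plan is to prove this by a Littlewood--Paley / paraproduct argument, following the proof of \cite[Lemma~22]{baladi_gouezel_piecewise}. Fix a dyadic partition of unity $1=\sum_{j\geq-1}\psi_j$ on $\R^d$, with $\psi_j$ (for $j\geq 0$) supported in an annulus $\{|(\xi,\eta)|\approx 2^j\}$ and $\psi_{-1}$ near the origin, write $\omega_j=\FFF^{-1}(\psi_j\FFF\omega)$, and let $a_{t,s}(D)$ denote the Fourier multiplier $\omega\mapsto\FFF^{-1}(a_{t,s}\FFF\omega)$, $a_{t,s}(\xi,\eta)=(1+|\xi|^2+|\eta|^2)^{t/2}(1+|\eta|^2)^{s/2}$ as in Definition~\ref{space}. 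Since $a_{t,s}(D)$ commutes with frequency localization, $\{a_{t,s}(D)\omega_j\}_j$ is itself a Littlewood--Paley decomposition of $a_{t,s}(D)\omega$, so for $1<p<\infty$ the classical square-function theorem gives $\norm{\omega}{H^{t,s}_p}\approx\norm{(\sum_j|a_{t,s}(D)\omega_j|^2)^{1/2}}{L^p}$; this equivalence, and the reassembly steps below, rest on the Fefferman--Stein vector-valued maximal inequality, where $1<p<\infty$ is used.

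Next I would split the product by Bony's paraproduct, $g\cdot\omega=\Pi_g\omega+\Pi_\omega g+R(g,\omega)$, with $\Pi_g\omega=\sum_j g_{<j}\,\omega_j$ (where $g_{<j}=\sum_{k\leq j-2}g_k$; the high frequency sits on $\omega$), $\Pi_\omega g=\sum_j \omega_{<j}\,g_j$ (high frequency on $g$), and $R(g,\omega)=\sum_{|j-k|\leq 2}g_j\omega_k$. For $\Pi_g\omega$ the $j$-th summand has frequency in the annulus $\approx 2^j$, and a standard commutator estimate (using the Bernstein bound $\norm{\nabla g_{<j}}{L^\infty}\leq\Cs\,2^{j}\|g\|_{C^0}$) gives $|a_{t,s}(D)(g_{<j}\omega_j)|\leq\Cs\,\|g\|_{C^0}\,M\!\bigl(a_{t,s}(D)\omega_j\bigr)$ pointwise, $M$ the Hardy--Littlewood maximal operator; squaring, summing in $j$, and invoking Fefferman--Stein yields $\norm{\Pi_g\omega}{H^{t,s}_p}\leq\Cs\,\|g\|_{C^{\tilde\alpha}}\norm{\omega}{H^{t,s}_p}$ with no restriction on $\tilde\alpha$ --- only $\|g\|_{C^0}\leq\|g\|_{C^{\tilde\alpha}}$ is used.

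The hypothesis $t+|s|<\tilde\alpha$ enters through the other two terms, via the H\"{o}lder--Zygmund bound $\norm{g_j}{L^\infty}\leq\Cs\,2^{-\tilde\alpha j}\|g\|_{C^{\tilde\alpha}}$ for the frequency-$2^j$ block of $g$. For $\Pi_\omega g$ the $j$-th summand $\omega_{<j}g_j$ lies in the annulus $\approx 2^j$, so applying $a_{t,s}(D)$ costs at most $\Cs\,2^{tj}$ (the worst case is a small $\eta$-frequency, where $(1+|\eta|^2)^{s/2}\approx 1$), while recovering $\omega_{<j}$ in $L^p$ from $\omega\in H^{t,s}_p$ loses at most $\Cs\,2^{|s|j}$, since $a_{t,s}^{-1}(\xi,\eta)\leq(1+|\eta|^2)^{|s|/2}$ and $\omega_{<j}$ has $\eta$-frequencies $\leq 2^j$. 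Multiplying, the $j$-th summand is bounded in $H^{t,s}_p$ by $\Cs\,2^{(t+|s|-\tilde\alpha)j}\|g\|_{C^{\tilde\alpha}}\norm{\omega}{H^{t,s}_p}$, and $\sum_j 2^{(t+|s|-\tilde\alpha)j}<\infty$ precisely because $\tilde\alpha>t+|s|$; summing the annular blocks gives $\norm{\Pi_\omega g}{H^{t,s}_p}\leq\Cs\,\|g\|_{C^{\tilde\alpha}}\norm{\omega}{H^{t,s}_p}$. The resonant term $R(g,\omega)$ is handled the same way, with the one standard extra point that the product of two blocks at frequency $\approx 2^j$ is supported in a \emph{ball} of radius $\approx 2^j$, not an annulus: one regroups by output scale $\ell$, $R(g,\omega)=\sum_\ell P_\ell\bigl(\sum_{j\geq\ell-2}g_j\omega_j\bigr)$, applies $a_{t,s}(D)$ at scale $\ell$ (cost $\Cs\,2^{t\ell}$), and sums the resulting geometric series first in $j$ and then in $\ell$, both convergent under $\tilde\alpha>t+|s|$. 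Adding the three contributions gives the lemma.

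I expect the main obstacle to be bookkeeping the \emph{anisotropy} of $a_{t,s}$: it involves two frequency scales at once --- the full frequency and the stable frequency $|\eta|$ --- so, unlike the isotropic multiplication theorem (where $\tilde\alpha>|\sigma|$ suffices for $C^{\tilde\alpha}$ to multiply $H^\sigma_p$), one must at every step of the paraproduct check what the $\eta$-frequency of a product is (it is dominated by the largest $\eta$-frequency of the factors) before $a_{t,s}$ may be moved past it, and the negative order $s$ --- which makes $H^{t,s}_p$ contain genuine distributions in the $\eta$-direction --- forces $\tilde\alpha$ past $t+|s|$; this clean but conservative threshold is the price of bounding $a_{t,s}^{-1}$ simply by $(1+|\eta|^2)^{|s|/2}$. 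The resonant term, whose output frequency is spread over all scales below $2^j$, is where the argument needs the most care. Everything else is routine, and is already contained in \cite[Lemma~22]{baladi_gouezel_piecewise}.
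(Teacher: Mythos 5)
Your low--high and resonant estimates are sound and correctly locate where $\tilde\alpha>t+|s|$ enters, but the treatment of the high--low term $\Pi_g\omega$ contains a genuine gap, and it is precisely the anisotropy issue you flag in your last paragraph without implementing it there. The claimed pointwise bound $|a_{t,s}(D)(g_{<j}\,\omega_j)|\leq \Cs\,\|g\|_{C^0}\,M\bigl(a_{t,s}(D)\omega_j\bigr)$, with $\Cs$ independent of $j$, is false for the anisotropic weight. What the kernel/commutator argument actually yields is $|a_{t,s}(D)(g_{<j}\,\omega_j)|\leq \Cs\,2^{tj}\,\|g\|_{C^0}\,M(\omega_j)$, since the supremum of $a_{t,s}$ over the output annulus is of order $2^{tj}$; and $2^{tj}M(\omega_j)$ is not dominated by $M(a_{t,s}(D)\omega_j)$. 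Indeed, if $\hat\omega_j$ is concentrated near a frequency $(\xi,\eta)$ with $|\xi|\approx 2^j$ and $|\eta|\approx 2^{j-2}$, then $a_{t,s}(D)\omega_j$ is of size $2^{tj}2^{-|s|j}|\omega_j|$, while $g_{<j}$ may contain a stable-direction mode at $-\eta$ which cancels the stable frequency of $\omega_j$ and pushes the product to $\eta\approx 0$, where the stable weight is $1$; the two sides of your inequality then differ by a factor of order $2^{|s|j}$. In other words, in the anisotropic setting the high--low paraproduct is \emph{not} bounded on $H_p^{t,s}$ by $\|g\|_{L^\infty}$ alone when $s<0$, so no argument can assign zero smoothness cost to this term; and even if one accepted your bound $2^{tj}\|g\|_{C^0}M(\omega_j)$, the square-function reassembly would produce the isotropic quantity comparable to $\norm{\omega}{H_p^{t}}$, which dominates, rather than is dominated by, $\norm{\omega}{H_p^{t,s}}$.

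The gap is repairable within your scheme, and the hypothesis leaves exactly enough room: decompose $g_{<j}=\sum_{k\leq j-2}g_k$ and estimate $g_k\omega_j$ blockwise. The product stays in a fattened annulus of radius $2^j$, so the isotropic factor of $a_{t,s}$ is comparable before and after multiplication, while the stable frequency moves by at most $\Cs 2^k$, so by the Peetre-type inequality $(1+|\eta+\theta|^2)^{s/2}\leq 2^{|s|/2}(1+|\theta|^2)^{|s|/2}(1+|\eta|^2)^{s/2}$ the stable weight deteriorates by at most $\Cs 2^{|s|k}$; against $\norm{g_k}{L^\infty}\leq \Cs 2^{-\tilde\alpha k}\|g\|_{C^{\tilde\alpha}}$ the sum over $k$ converges because $\tilde\alpha>|s|$, which follows from $\tilde\alpha>t+|s|$ and $t>0$; one then sums over $j$ with the square function as you do. With this correction your paraproduct proof goes through and gives the stated threshold. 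Note finally that the paper itself gives no self-contained argument for this lemma: it simply invokes the proof of Lemma~22 of \cite{baladi_gouezel_piecewise}, which reaches the same threshold by a cruder route, decomposing only $g$ into frequency-localized blocks and paying the full weight-ratio loss $2^{(t+|s|)k}$ per block against the H\"older gain $2^{-\tilde\alpha k}$, with no three-way Bony splitting; your approach, once repaired, is finer (it isolates where only $|s|$, rather than $t+|s|$, is needed) but also heavier.
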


The following extension of a classical result of Strichartz
(see \cite[Lemma 23]{baladi_gouezel_piecewise}) is the key to
our results. It follows from Lemma 23 of
\cite{baladi_gouezel_piecewise} and a linear change of
coordinates.

\begin{lemma}
\label{lem:multiplier} Let $1<p<\infty$ and $1/p-1<s\leq 0
\leq t <1/p$. Let $e_1,\dots,e_d$ be a basis of $\R^d$, such
that $e_{d_u+1},\dots,e_d$ form a basis of $\{0\}\times
\R^{d_s}$. There exists a constant $\Cs$ (depending only on
$p,s,t$ and the norm of the matrix change of coordinate between
$e_1,\dots,e_d$ and the canonical basis of $\R^d$) so that, for
any subset $U$ of $\real^d$ whose intersection with almost
every line directed by a vector $e_i$ has at most $M$ connected
components,
  \begin{equation*}
  \norm{1_{U} \omega}{H_p^{t,s}} \leq \Cs M
  \norm{\omega}{H_p^{t,s}}\, .
  \end{equation*}
\end{lemma}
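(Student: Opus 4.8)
\emph{Proof strategy.}
The plan is to reduce to Lemma~23 of \cite{baladi_gouezel_piecewise}, which is exactly the present statement in the special case where $e_1,\dots,e_d$ is the standard basis of $\R^d$, by means of a linear change of coordinates. First I would introduce the invertible matrix $L\colon\R^d\to\R^d$ sending the standard basis to $e_1,\dots,e_d$. Since $e_{d_u+1},\dots,e_d$ span $\{0\}\times\R^{d_s}$, the last $d_s$ columns of $L$ have vanishing first $d_u$ entries, so $L=\left(\begin{smallmatrix}A&0\\ C&B\end{smallmatrix}\right)$ with $A$ and $B$ invertible, and likewise for $L^{-1}$; in particular both $L$ and $L^{-1}$ are block lower triangular and preserve the stable subspace $\{0\}\times\R^{d_s}$. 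Writing
\[
1_U\omega=\bigl[\,1_{L^{-1}U}\cdot(\omega\circ L)\,\bigr]\circ L^{-1}\, ,
\]
the lemma reduces to two facts: \emph{(i)} composition with $L$ and with $L^{-1}$ is bounded on $H^{t,s}_p$, with norm depending only on $p,s,t$ and $\|L^{\pm1}\|$; and \emph{(ii)} the set $L^{-1}U$ meets almost every line parallel to a coordinate axis in at most $M$ connected components --- which is immediate, since $L^{-1}$ is a homeomorphism carrying the coordinate axes onto lines directed by the $e_i$ --- so Lemma~23 of \cite{baladi_gouezel_piecewise} applies to $L^{-1}U$ and gives $\norm{1_{L^{-1}U}v}{H^{t,s}_p}\le\Cs M\norm{v}{H^{t,s}_p}$. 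Composing the three estimates yields the claim, so the real work is in \emph{(i)}.

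For \emph{(i)}, I would carry out the change of variables $\zeta\mapsto\transposee{L}\zeta$ in the Fourier integral defining $\Lambda^{t,s}(\omega\circ L)$, where $\Lambda^{t,s}\coloneqq\FFF^{-1}a_{t,s}\FFF$; this yields $\Lambda^{t,s}(\omega\circ L)=\bigl(T_m(\Lambda^{t,s}\omega)\bigr)\circ L$, with $T_m$ the Fourier multiplier operator of symbol
\[
m(\zeta)=\frac{a_{t,s}(\transposee{L}\zeta)}{a_{t,s}(\zeta)}
=\left(\frac{1+|\transposee{L}\zeta|^{2}}{1+|\zeta|^{2}}\right)^{t/2}
\left(\frac{1+|\transposee{B}\eta|^{2}}{1+|\eta|^{2}}\right)^{s/2}\, ,\qquad\zeta=(\xi,\eta)\, ,
\]
the factorisation using that $\transposee{L}$ is block \emph{upper} triangular, so that the $\eta$-component of $\transposee{L}\zeta$ is $\transposee{B}\eta$, independent of $\xi$. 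Hence $\norm{\omega\circ L}{H^{t,s}_p}=|\det L|^{-1/p}\norm{T_m(\Lambda^{t,s}\omega)}{L^p}\le|\det L|^{-1/p}\norm{T_m}{L^p\to L^p}\norm{\omega}{H^{t,s}_p}$, so it remains to see that $m$ is an $L^p$-multiplier. I would write $m=\mu_1\cdot\mu_2$, where $\mu_1$ is a ratio of positive powers of two comparable elliptic quadratics on $\R^d$, hence a classical symbol of order $0$ on $\R^d$, and $\mu_2$ depends only on $\eta$ and is a classical symbol of order $0$ on $\R^{d_s}$. Then $T_{\mu_1}$ is bounded on $L^p(\R^d)$ by the Mikhlin--H\"{o}rmander theorem, and $T_{\mu_2}$, being a Fourier multiplier acting only in the $y$-variables, is bounded on $L^p(\R^d)=L^p(\R^{d_u};L^p(\R^{d_s}))$ with the same norm as on $L^p(\R^{d_s})$, again by Mikhlin--H\"{o}rmander; since $T_{\mu_1}$ and $T_{\mu_2}$ commute, $T_m=T_{\mu_1}T_{\mu_2}$ is bounded, with norm controlled by $p,s,t,d$ and $\|L^{\pm1}\|$. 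Applying the same argument to $L^{-1}$ completes \emph{(i)}.

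The hard part --- really the only non-routine point --- is the factorisation of $m$ in \emph{(i)}, which hinges entirely on $L$ preserving the stable subspace: if $e_{d_u+1},\dots,e_d$ failed to span $\{0\}\times\R^{d_s}$, then $L$ would mix the stable and unstable variables, $\pi_\eta(\transposee{L}\zeta)$ would genuinely depend on $\xi$, and $m$ would no longer split off an $\eta$-only symbol --- indeed $H^{t,s}_p$ is genuinely anisotropic and not invariant under such $L$. This is precisely where the hypothesis on $e_{d_u+1},\dots,e_d$ is used. Everything else --- the change of variables, the fact that $f\mapsto f\circ L$ scales the $L^p$-norm by $|\det L|^{-1/p}$, and the elementary check that $\mu_1$ and $\mu_2$ are order-$0$ symbols with seminorms controlled by $\|L^{\pm1}\|$ --- is standard.
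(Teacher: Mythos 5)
Your proposal is correct and is exactly the route the paper takes: the paper proves this lemma in one line, by reducing to Lemma~23 of \cite{baladi_gouezel_piecewise} via a linear change of coordinates, which is precisely your reduction. Your multiplier argument (exploiting that $\transposee{L}$ is block upper triangular so the symbol splits off an $\eta$-only factor) just fills in the standard details the paper leaves implicit.
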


The following is essentially Lemma 28 in
\cite{baladi_gouezel_piecewise}.

\begin{lemma}[Localization principle]
\label{lem:localization}
Let $\mathbb K$ be a compact subset of $\R^d$. For each $m\in
\Z^d$, consider a function $\eta_m$ supported in $m+\mathbb K$,
with uniformly bounded $C^1$ norm. For any $p\in (1,\infty)$
and $t$, $s\in \real$ with $|t|+|s|<1$, there exists $\Cs >0$
so that for each $\omega \in H_p^{t,s}$
  \begin{equation*}
  \left(\sum_{m\in \integer^d} \norm{\eta_m \omega }{H_p^{t,s}}^p\right)^{1/p}
  \le \Cs  \norm{\omega}{H_p^{t,s}} \, .
  \end{equation*}
\end{lemma}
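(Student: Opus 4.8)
The plan is to prove the localization principle (Lemma~\ref{lem:localization}) by reducing it, via a partition of unity and a duality argument, to the fact that the norm of $H_p^{t,s}$ can be tested against a single fixed bump function family, and then invoking Lemma~\ref{Leib} together with an $\ell^p$-valued Fourier-multiplier estimate. The key point is that the ``almost orthogonality'' of translated bumps in $\real^d$ — reflected in the fact that the operator $w \mapsto (\eta_m w)_{m}$ is bounded from $H_p^{t,s}$ to $\ell^p(H_p^{t,s})$ — follows because the Fourier multipliers $a_{t,s}$ have symbol estimates that are stable under the modulation/translation structure of the lattice $\integer^d$.

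\smallskip

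Concretely, first I would fix a $C^\infty$ function $\chi$ supported in a slightly larger compact set $\mathbb K'$ with $\sum_{m\in\integer^d}\chi(\cdot - m)\equiv 1$, and write $\chi_m=\chi(\cdot-m)$; since $\eta_m$ is supported in $m+\mathbb K$, we have $\eta_m = \eta_m \tilde\chi_m$ where $\tilde\chi_m$ is a finite sum of the $\chi_{m'}$ with $m'$ near $m$, so by Lemma~\ref{Leib} (using $|t|+|s|<1$, hence we may take $\tilde\alpha$ with $t+|s|<\tilde\alpha\le 1$ and note $\|\eta_m\|_{C^{\tilde\alpha}}$ is uniformly bounded since the $\eta_m$ have uniformly bounded $C^1$ norm — strictly this requires $\tilde\alpha\le 1$, which is fine) it suffices to bound $\bigl(\sum_m \norm{\chi_m \omega}{H_p^{t,s}}^p\bigr)^{1/p}$. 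Thus the whole lemma reduces to the special case $\eta_m=\chi_m$ for one fixed $\chi$.

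\smallskip

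For that special case, the plan is to use a square-function / vector-valued multiplier argument. Writing $\chi_m\omega = \mathbf F^{-1}(e^{-i m\cdot}\, \hat\chi\, (\text{shift of }\hat\omega))$ is awkward; instead I would argue by interpolation and duality. At the endpoints $s=0$ (so $H_p^t$ is a classical Bessel-potential space) the estimate $\sum_m \|\chi_m\omega\|_{H_p^t}^p \lesssim \|\omega\|_{H_p^t}^p$ for $0\le t<1$ (and its dual for $t\le 0$) is standard — it is exactly the statement that multiplication by the $\chi_m$ realizes $H_p^t$ as a retract of the $\ell^p$-sum $\ell^p(\integer^d; H_p^t)$, which one gets from the fact that $\{\chi_m\}$ is a bounded smooth partition of unity subordinate to a bounded covering. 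To get general $s$, I would observe that $H_p^{t,s}$ is obtained from $H_p^{t'}$-type spaces by the anisotropic multiplier $(1+|\eta|^2)^{s/2}$, which commutes (up to lower-order commutators controlled by Lemma~\ref{Leib}, since $\chi_m$ depends on all variables) with multiplication operators only approximately; so the cleanest route is to cite the corresponding statement from \cite{baladi_gouezel_piecewise} Lemma~28 directly, whose proof handles precisely this anisotropic localization, and to supply the $\eta_m\rightsquigarrow\chi_m$ reduction above as the only new ingredient.

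\smallskip

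The main obstacle I expect is the interaction between the localization functions $\chi_m$ (which are genuinely $d$-dimensional bumps, hence do not commute with the anisotropic vertical multiplier $(1+|\eta|^2)^{s/2}$) and the non-integer, possibly negative, smoothness indices $t,s$. This is exactly why the hypothesis $|t|+|s|<1$ appears: it keeps everything within the range where commutators $[\,\chi_m,\ \mathbf F^{-1}a_{t,s}\mathbf F\,]$ are one order smoother than the main term and can be summed in $\ell^p$ by a Schur-type / almost-diagonality estimate on the lattice, using that $\chi$ is Schwartz so the commutator kernel decays rapidly in $|m-m'|$. Handling the negative-$s$ case requires a duality pass (pairing against $H_{p'}^{-t,-s}$), which is routine once the positive case is in place. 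Since the paper explicitly says this is ``essentially Lemma~28 in \cite{baladi_gouezel_piecewise},'' I would keep the write-up short: state the reduction to $\chi_m$ via Lemma~\ref{Leib}, then invoke the cited result.
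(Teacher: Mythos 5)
Your proposal is correct and follows essentially the same route as the paper: reduce via Lemma~\ref{Leib} from the given family $\eta_m$ to a fixed translated smooth bump family (the paper uses a single cutoff $\gamma$ equal to $1$ on $\mathbb K$ with $\gamma_m(z)=\gamma(z-m)$ and writes $\eta_m=\eta_m\gamma_m$, rather than your partition of unity, but this is cosmetic), and then invoke Lemma~28 of \cite{baladi_gouezel_piecewise} for the anisotropic localization of translates of a fixed function. The additional sketch of interpolation/commutator arguments is unnecessary once that citation is made, and the final write-up you propose matches the paper's proof.
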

\begin{proof}
Consider  a  compactly supported $C^\infty$ function $\gamma$,
equal to $1$ on $\mathbb K$, and write $\gamma_m(z)=\gamma(z-m)$.
Then $\eta_m=\eta_m \gamma_m$, and
  \begin{equation*}
  \norm{\eta_m \omega }{H_p^{t,s}}
  =\norm{\eta_m \gamma_m \omega }{H_p^{t,s}}
  \leq \Cs \norm{\gamma_m \omega}{H_p^{t,s}}
  \end{equation*}
by Lemma \ref{Leib}. The result follows by applying \cite[Lemma
28]{baladi_gouezel_piecewise}.
\end{proof}

For any real number $t$ and any $1<p<\infty$, we set
$H_p^{t}(X_0)$ to be  the Sobolev-Triebel space defined as the
distributions that have finite $H_p^{t}(\R^d)$ norm in any
(fixed) smooth coordinate system.

As usual, a compact imbedding statement \`{a} la Arzel\`{a}-Ascoli will
be used (recall that $X_0$ is compact):

\begin{lemma}\label{embed}
Let $s<0<t$ with $t+|s|<1$, and let $1 < p < \infty$. Assume
that $t-|s|> - \beta$. Then, for any $R,C_0,C_1$, the space
$\HHH_p^{t,s}(R, C_0, C_1)$ is continuously embedded in
$H_p^{t-|s|}(X_0)$. In addition, we have the continuous
embeddings
  \begin{equation}
  \label{eq_embedding}
  \HHH_p^{t,s}(R,C_0,C_1)\subset \HHH_p^{t',s'}(R,C_0,C_1) \text{ if }t' \le t \text{ and }s' \le s\, .
  \end{equation}
Moreover, this inclusion is compact if $t'<t$.
\end{lemma}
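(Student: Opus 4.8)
\textbf{Plan for the proof of Lemma~\ref{embed}.}

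The plan is to reduce each embedding claim to a local statement about the spaces $H_p^{t,s}$ on $\real^d$, using the definition of $\norm{\cdot}{\HHH_p^{t,s}(R,C_0,C_1)}$ as a supremum over admissible chart systems $\Phi$ together with the bounded intersection multiplicity of the partition of unity $\{\brho_\bm\}$. For the first claim --- continuous embedding into $H_p^{t-|s|}(X_0)$ --- I would fix one particular admissible chart system (e.g.\ take each $\phi_\bm$ to be the affine chart $F_{\mathbb E}$ straightening a fixed subspace of the cone $\CC^s_\bm$; such a choice exists by the discussion after Definition~\ref{deffol}), so that $\norm{\omega}{\HHH_p^{t,s}(R,C_0,C_1)} \ge \norm{\omega}{\Phi}$ for that particular $\Phi$. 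Then, on each local piece, $H_p^{t,s} \subset H_p^{t-|s|}$ because $a_{t-|s|}(\xi,\eta) = (1+|\xi|^2+|\eta|^2)^{(t-|s|)/2} \lesssim (1+|\xi|^2+|\eta|^2)^{t/2}(1+|\eta|^2)^{s/2} = a_{t,s}(\xi,\eta)$, the inequality between the Fourier multipliers being elementary (here one uses $s<0$ and $t-|s| \le t$); more precisely one checks $a_{t-|s|}/a_{t,s}$ is a bounded multiplier on $L^p$ in the Mikhlin/Triebel sense. Using the hypothesis $t-|s|>-\beta$ together with $t+|s|<1$ (so $|t-|s||<1$ as well), the local $H_p^{t-|s|}$-pieces reassemble via Lemma~\ref{lem:localization} and the fact, recalled in Remark~\ref{natural2}, that $\HHH_p^{t-|s|,0}(R,C_0,C_1) \cong H_p^{t-|s|}(X_0)$ when $-\beta<t-|s|<1+\beta$; the bounded $C^1$ norms of the charts (Lemma~\ref{lempropphi}) and the boundedness of multiplication by $1_{O_\bm}$ (Lemma~\ref{lem:multiplier}) control the chart-pullback and cutoff operations.

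For the inclusion $\HHH_p^{t,s}(R,C_0,C_1) \subset \HHH_p^{t',s'}(R,C_0,C_1)$ when $t'\le t$ and $s'\le s$, the point is that the \emph{same} chart systems $\Phi$ are admissible for both spaces (the class $\FF(\bm)$ does not depend on $t,s$), so it suffices to show $\norm{\omega}{\Phi, H_p^{t',s'}} \le \Cs \norm{\omega}{\Phi, H_p^{t,s}}$ for each fixed $\Phi$, term by term in the sum~\eqref{defnormen}. This again reduces to $H_p^{t',s'} \subset H_p^{t,s}$ on $\real^d$ with uniform constant, which follows because $a_{t',s'}/a_{t,s} = (1+|\xi|^2+|\eta|^2)^{(t'-t)/2}(1+|\eta|^2)^{(s'-s)/2}$ is bounded (both exponents are $\le 0$) and is an $L^p$-Fourier multiplier by the classical Triebel/Mikhlin theory. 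Taking the supremum over $\Phi$ on both sides gives~\eqref{eq_embedding}.

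For compactness when $t'<t$, I would factor the inclusion through an intermediate exponent and an Arzel\`a--Ascoli / Rellich argument, exploiting that $X_0$ is compact. Concretely: pick $t'<t''<t$ and $s'\le s''\le s$; the inclusion $\HHH_p^{t,s} \hookrightarrow \HHH_p^{t'',s''}$ is bounded by the previous paragraph, and it remains to show $\HHH_p^{t'',s''} \hookrightarrow \HHH_p^{t',s'}$ is compact. The cleanest route is to use that, after freezing an admissible $\Phi$, each local pullback lives in $H_p^{t'',s''}$ of a fixed bounded region of $\real^d$, and the embedding $H_p^{t'',s''}(\text{bounded region}) \hookrightarrow H_p^{t',s'}$ is compact (standard Rellich-type compactness for Triebel--Lizorkin spaces with strictly decreasing smoothness indices on a bounded domain); since the sum over $\bm$ has boundedly many terms for fixed $R$, a finite-dimensional-reduction / diagonal argument upgrades this to compactness of the global embedding. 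The main obstacle --- and the step deserving real care --- is that the $\HHH$-norm is a \emph{supremum} over infinitely many chart systems $\Phi$, so compactness is not immediate from compactness on a single $\Phi$: one must argue that a bounded sequence in $\HHH_p^{t,s}$, which is bounded in $H_p^{t,s}$ on every chart, has a subsequence converging in $H_p^{t',s'}$ \emph{uniformly over all admissible $\Phi$}. This is handled by using the uniform $C^{1+\beta}$ bounds on the charts (Lemma~\ref{lempropphi}) to get an equicontinuity-type estimate controlling the modulus of continuity of $\Phi \mapsto \norm{\omega}{\Phi}$ on the (compact, in a suitable topology) space of admissible chart systems, so that a single convergent subsequence works simultaneously for all $\Phi$.
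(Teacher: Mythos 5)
Your treatment of the continuous embeddings is essentially fine and close to the paper's (local comparison of the multipliers $a_{t',s'}\le \Cs\, a_{t,s}$, then take the supremum over chart systems), but the compactness part has a genuine gap. You correctly identify the main obstacle --- the $\HHH$-norm is a supremum over all admissible chart systems $\Phi$, so compactness "for one frozen $\Phi$" does not yield compactness for the $\HHH_p^{t',s'}$-norm --- but your proposed resolution (an equicontinuity estimate for $\Phi\mapsto\norm{\omega}{\Phi}$ over a "compact space of chart systems") is never substantiated, and there is no tool in the paper that gives it: continuity of $\omega\mapsto\omega\circ\phi$ in $H_p^{t',s'}$ (with $s'<0$) under merely $C^1$-small perturbations of the chart, uniformly over bounded sets of $\omega$, is precisely the kind of statement that is unavailable; even boundedness of such compositions at negative index requires the $C^\beta$-Jacobian/duality argument. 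The paper avoids this difficulty entirely: it proves, chart by chart and with constants independent of the chart, the interpolation-type inequality $\norm{\omega}{H_p^{t',s'}}\le\epsilon\norm{\omega}{H_p^{t,s}}+C(\epsilon)\norm{\omega}{H_p^{t_0-|s|,0}}$ (via the Marcinkiewicz multiplier theorem, comparing the kernels outside and inside a compact frequency set), takes the supremum over $\Phi$ to get the same inequality for the $\HHH$-norms, and then only needs compactness of the inclusion $\HHH_p^{t,s}\hookrightarrow\HHH_p^{t_0-|s|,0}$ for some $t_0<t$ with $t_0-|s|>-\beta$. That last compactness holds because $\HHH_p^{t_0-|s|,0}$ is \emph{chart-independent}: any two admissible charts differ by a $C^1$ change of variables with uniformly $C^\beta$ Jacobian, and by duality ($(H_p^r)^*=H_{p'}^{-r}$ plus Lemma~\ref{Leib}) such compositions act boundedly on $H_p^{r}$ for $|r|<\beta$; hence $\HHH_p^{t_0-|s|,0}\cong H_p^{t_0-|s|}(X_0)$ and Rellich on the compact $X_0$ applies. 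A bounded sequence then has a subsequence convergent in $\HHH_p^{t_0-|s|,0}$, and the interpolation inequality makes it $2\epsilon$-Cauchy in $\HHH_p^{t',s'}$; a diagonal argument finishes.

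Two smaller points. First, your proof of the embedding into $H_p^{t-|s|}(X_0)$ invokes Remark~\ref{natural2} for the isomorphism $\HHH_p^{t-|s|,0}\cong H_p^{t-|s|}(X_0)$, but that isomorphism is exactly what is established inside the proof of this lemma (by the duality/chart-change argument just described), so the citation is circular as written; this is also the only place where the hypothesis $t-|s|>-\beta$ is actually used, and your sketch never explains its role. Second, fixing a single affine chart system does give a lower bound for the supremum norm, so that part of your reduction is legitimate, but the reassembly of the localized, cut-off pieces $\brho_\bm 1_{O_\bm}\omega$ into a bound for the global $H_p^{t-|s|}(X_0)$-norm needs the negative-index change-of-variables invariance (or the chart-independence statement) spelled out, not just Lemma~\ref{lem:localization}.
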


\begin{proof}
Before proving the lemma, we start with a functional analytic
preliminary, required because $t-|s|$ will be strictly negative
in our application of the lemma. If $1/p+1/p'=1$ for $1<p,\ p'
< \infty$, and $r > 0$, then classical duality results (see
e.g. \cite[Lemma 20]{baladi_gouezel_piecewise} and references
therein) yield $(H_{ p}^r)^*=H_{p'}^{-r}$. If $G$ is a
diffeomorphism of $\real^d$ then the  dual operator $L^*$ on
$H_p^r$ to $ L(w')= w' \circ G$ is $w\mapsto |\det DG^{-1}| w
\circ G^{-1}$. For $r\in [0,1]$, $H_p^{r}$ is invariant under
the composition by a $C^1$ diffeomorphism $G$ (since this is
the case of $H_p^0=L^p$, and $H_p^1$). By duality, $H_p^{-r}$
is invariant by $w\mapsto |\det DG^{-1}|\cdot w \circ G^{-1}$.
Therefore, Lemma \ref{Leib} shows that $H_p^{-r}$ is invariant
under the composition with diffeomorphisms whose jacobian is
$C^\beta$ for some $\beta >r$.

We now turn to the proof of the lemma. In any admissible chart,
the continuous embedding claim \eqref{eq_embedding} follows
from the definitions and properties of Triebel spaces, taking
the supremum over all admissible charts. For the rest of the
proof, let us fix $R,C_0,C_1$. To simplify notations, we will
write $\HHH^{t,s}_p$ for $\HHH^{t,s}_p(R,C_0,C_1)$.

Consider now $s'\leq s$ and $t'<t$. Fix also $t_0<t$ with
$t_0-|s|>-\beta$. Since $H_p^{t,s}$ is included in
$H_p^{t-|s|,0}$, it follows by taking the supremum over the
admissible charts that $\HHH_p^{t,s}$ is included in
$\HHH_p^{t-|s|,0}$. Moreover, for any admissible charts
$\phi_1, \phi_2 \in \FF(\bm)$ for some $\bm$ (recall
\eqref{ourcharts}), the change of coordinates $\phi_2\circ
\phi_1^{-1}$ is $C^1$ and has a (uniformly) $C^\beta$ Jacobian.
It follows from the functional analytic preliminary that
changing the system $\Phi$ of charts in the definition of the
$\HHH_p^{t-|s|,0}$-norm gives equivalent norms. Therefore,
$\HHH_p^{t-|s|,0}$ is isomorphic to the Triebel space
$H_p^{t-|s|}(X_0)$. Since the inclusion of $H_p^{t-|s|}(X_0)$
in $H_p^{t_0-|s|}(X_0)$ is compact, it follows that the
inclusion $\HHH_p^{t,s} \to \HHH_p^{t_0-|s|,0}$ is also
compact.

Consider now a sequence $\omega_n\in \HHH_p^{t,s}$, with norms
bounded by $1$. To prove that the inclusion of $\HHH_p^{t,s}$
in $\HHH_p^{t',s'}$ is compact, it is sufficient to show that,
for any $\epsilon$, there exists a subsequence of $\omega_n$
along which
  \begin{equation}
  \label{lqmkdsjfposdf}
  \limsup \norm{\omega_m-\omega_n}{\HHH_p^{t',s'}} \leq 2\epsilon.
  \end{equation}
We can assume without loss of generality that $\omega_n$
converges in $\HHH_p^{t_0-|s|,0}$. Let $C(\epsilon)$ be such
that any distribution $\omega$ on $\R^d$ satisfies
  \begin{equation}
  \label{goodcharts}
  \norm{\omega}{H_p^{t',s'}}\leq \epsilon \norm{\omega}{H_p^{t,s}}+C(\epsilon)
  \norm{\omega}{H_p^{t_0-|s|,0}}.
  \end{equation}
To prove that such a constant $C(\epsilon)$ exists, let us note
that the kernel $a_{t',s'}$ defining the $H_p^{t',s'}$-norm is
bounded by $\epsilon a_{t,s}$ outside of a compact set, where
it is bounded by $C(\epsilon) a_{t_0-|s|,0}$ if $C(\epsilon)$
is large enough. Therefore, \eqref{goodcharts} follows from the
Marcinkiewicz multiplier theorem (see e.g. \cite[Theorem
21]{baladi_gouezel_piecewise} or \cite[Theorem
2.4/2]{triebel_III}).

Taking the supremum of the equation \eqref{goodcharts} over the
admissible charts, we obtain
  \begin{equation}
  \norm{\omega_n-\omega_m}{\HHH_p^{t',s'}}
  \leq \epsilon \norm{\omega_n-\omega_m}{\HHH_p^{t,s}}
  +C(\epsilon) \norm{\omega_n-\omega_m}{\HHH_p^{t_0-|s|,0}}.
  \end{equation}
Since the quantity
$\norm{\omega_n-\omega_m}{\HHH_p^{t_0-|s|,0}}$ converges to $0$
when $n,m\to\infty$, this proves \eqref{lqmkdsjfposdf}.
\end{proof}

The following lemma on partitions of unity is Lemma 32 from
\cite{baladi_gouezel_piecewise}:
\begin{lemma}
\label{lem:sum} Let $t$ and $s$ be
arbitrary real numbers. There exists a constant $\Cs$ such
that, for any distributions $v_1,\dots, v_l$ with compact
support in $\R^d$, belonging to $H_p^{t,s}$, there exists a
constant $C$ depending only on the supports of the
distributions $v_i$ with
  \begin{equation}
  \norm{ \sum_{i=1}^l v_i}{H_p^{t,s}}^p \leq \Cs m^{p-1} \sum_{i=1}^l
  \norm{v_i}{H_p^{t,s}}^p + C \sum_{i=1}^l
  \norm{v_i}{H_p^{t-1,s}}^p,
  \end{equation}
where $m$ is the intersection multiplicity of the supports of
the $v_i$'s, i.e., $m=\sup_{x\in \R^d} \Card\{i \st x\in
\supp(v_i)\}$.
\end{lemma}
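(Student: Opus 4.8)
The plan is to reduce the whole estimate to an elementary pointwise inequality in $L^p$, at the cost of a remainder measured in the weaker norm $\norm{\cdot}{H^{t-1,s}_p}$. Let $\Theta$ be the Fourier multiplier with symbol $a_{t,s}$ of Definition~\ref{space}, so that $\norm{w}{H^{t,s}_p}=\norm{\Theta w}{L^p}$ and $\Theta\sum_i v_i=\sum_i\Theta v_i$. The obstruction to using the finite–overlap hypothesis directly is that $\Theta$ is non-local, so the $\Theta v_i$ need not inherit the bounded intersection multiplicity of the $v_i$. I would circumvent this by splitting $\Theta$ into a part that acts locally at a small, \emph{support-adapted} scale, plus a smoothing remainder that is genuinely of lower order.

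First I would record a purely geometric fact: there is $\epsilon_0>0$, \emph{depending only on the supports} $\supp v_1,\dots,\supp v_l$, with
\[
\sup_{z\in\R^d}\Card\{\,i\st \mathrm{dist}(z,\supp v_i)\le\epsilon_0\,\}\le m .
\]
This is a compactness argument: were it false, for each $n$ there would be a point $z_n$ and a set $S_n$ of indices with $|S_n|\ge m+1$ and $\mathrm{dist}(z_n,\supp v_i)\le 1/n$ for all $i\in S_n$; since there are only finitely many possible $S_n$ and the $z_n$ are bounded (being within distance $1$ of some fixed compact $\supp v_{i_0}$), one passes to a subsequence with $S_n\equiv S$, $z_n\to z_*$, and then $z_*\in\supp v_i$ for all $i\in S$, contradicting $|S|>m$. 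Next, since $a_{t,s}(\xi,\eta)=(1+|\xi|^2+|\eta|^2)^{t/2}(1+|\eta|^2)^{s/2}$ is $C^\infty$ on $\R^d$ and satisfies the usual symbol estimates, its inverse Fourier transform $k=\FFF^{-1}a_{t,s}$ is $C^\infty$ on $\R^d\setminus\{0\}$ with all derivatives rapidly decreasing away from the origin (write $z^\gamma\partial_z^\beta k=\FFF^{-1}[(-i\partial_\zeta)^\gamma((i\zeta)^\beta a_{t,s})]$ and take $|\gamma|$ large). Fixing $\chi\in C_c^\infty(B(0,\epsilon_0))$ equal to $1$ near the origin, I would write $k=k^{\mathrm{loc}}+k^{\mathrm{tail}}$ with $k^{\mathrm{loc}}=\chi k$ and $k^{\mathrm{tail}}=(1-\chi)k$, so that $k^{\mathrm{loc}}$ is a compactly supported distribution with support in $B(0,\epsilon_0)$, while $k^{\mathrm{tail}}\in\mathcal{S}(\R^d)$; correspondingly $\Theta v_i=k^{\mathrm{loc}}*v_i+k^{\mathrm{tail}}*v_i$.

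For the main term I would use that $k^{\mathrm{loc}}*v_i=\Theta v_i-k^{\mathrm{tail}}*v_i$ lies in $L^p$ ($k^{\mathrm{tail}}*v_i$ is treated below, and is in fact rapidly decreasing since $v_i$ has compact support), and is supported in $\{z\st\mathrm{dist}(z,\supp v_i)\le\epsilon_0\}$; by the geometric fact at most $m$ of these functions are nonzero at a given point, so the power–mean inequality gives, pointwise and hence in $L^p$,
\[
\Bigl\|\sum_i k^{\mathrm{loc}}*v_i\Bigr\|_{L^p}^p\le m^{p-1}\sum_i\norm{k^{\mathrm{loc}}*v_i}{L^p}^p
\le 2^{p-1}m^{p-1}\sum_i\bigl(\norm{v_i}{H^{t,s}_p}^p+\norm{k^{\mathrm{tail}}*v_i}{L^p}^p\bigr).
\]
For the remainder, since $\widehat{k^{\mathrm{tail}}}\in\mathcal{S}$ and $a_{t-1,s}^{-1}$ grows at most polynomially, the function $L:=\FFF^{-1}\bigl(\widehat{k^{\mathrm{tail}}}\,a_{t-1,s}^{-1}\bigr)$ lies in $\mathcal{S}\subset L^1$, and $k^{\mathrm{tail}}*v_i=L*\FFF^{-1}(a_{t-1,s}\FFF v_i)$, whence $\norm{k^{\mathrm{tail}}*v_i}{L^p}\le\norm{L}{L^1}\norm{v_i}{H^{t-1,s}_p}$, and likewise $\|\sum_i k^{\mathrm{tail}}*v_i\|_{L^p}^p\le l^{p-1}\norm{L}{L^1}^p\sum_i\norm{v_i}{H^{t-1,s}_p}^p$ by the triangle inequality together with $(\sum_{i=1}^l a_i)^p\le l^{p-1}\sum_i a_i^p$. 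Adding the two contributions via $\norm{\sum_i v_i}{H^{t,s}_p}^p\le 2^{p-1}\bigl(\|\sum_i k^{\mathrm{loc}}*v_i\|_{L^p}^p+\|\sum_i k^{\mathrm{tail}}*v_i\|_{L^p}^p\bigr)$ yields the assertion with $\Cs=4^{p-1}$ (depending only on $p$) and $C=\bigl(4^{p-1}m^{p-1}+2^{p-1}l^{p-1}\bigr)\norm{L}{L^1}^p$, which depends only on $m$, $l$ and $\epsilon_0$, i.e.\ only on the supports.

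The substance is in the design rather than in any single computation: one must split the \emph{multiplier} at the support-adapted scale $\epsilon_0$, not localize in physical space at a fixed scale — the latter would replace $m^{p-1}$ by a larger, support-dependent overlap constant in the leading term, which the statement does not permit. The two supporting inputs (the compactness fact, and that the kernel of $a_{t,s}$ is smooth and rapidly decreasing off the origin, so that $k^{\mathrm{tail}}$ is Schwartz and consequently gains one full derivative in $L^p$, landing the error in $H^{t-1,s}_p$) are standard; the only care needed is the distribution-theoretic bookkeeping ensuring that $k^{\mathrm{loc}}*v_i$ is a genuine $L^p$ function to which the pointwise inequality applies.
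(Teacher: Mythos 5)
Your architecture (split the Fourier multiplier into a compactly supported kernel piece at a support-adapted scale $\epsilon_0$ plus a smoothing tail, use the finite overlap pointwise for the local piece, and absorb the tail into the weaker norm) would be fine for an isotropic symbol, and your preliminary compactness argument and the $m^{p-1}$ pointwise inequality are correct. But the step the whole proof rests on is false: $k=\FFF^{-1}a_{t,s}$ is \emph{not} $C^\infty$ on $\R^d\setminus\{0\}$ with rapidly decreasing derivatives. The factor $(1+|\eta|^2)^{s/2}$ does not depend on $\xi$, so differentiating in $\eta$ never produces decay in $\xi$; your identity $z^\gamma\partial_z^\beta k=\FFF^{-1}[(-i\partial_\zeta)^\gamma((i\zeta)^\beta a_{t,s})]$ only yields bounds of the form $|x|^{-N}|y|^{-M}$, i.e.\ rapid decay away from the subspace $\{0\}\times\R^{d_s}$, not away from the origin. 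Concretely, for $t=0$ one has $k=\delta_0(x)\otimes g_s(y)$ with $g_s=\FFF^{-1}\bigl((1+|\eta|^2)^{s/2}\bigr)$, whose singular support is all of $\{0\}\times\R^{d_s}$; and for the relevant range $t>0$, $s<0$ one checks that $\FFF_x k(\xi,y)\sim |\xi|^{t} g_s(y)$ as $|\xi|\to\infty$ for fixed $y\neq 0$, so $k$ is singular along the whole stable subspace. Consequently $k^{\mathrm{tail}}=(1-\chi)k$ is not Schwartz: it retains $\delta$-type singularities in the $x$-variable at every $|y|>\epsilon_0$, its Fourier transform does not decay in $\xi$, and since $a_{t-1,s}^{-1}$ grows like $|\xi|^{1-t}$ the function $L=\FFF^{-1}\bigl(\widehat{k^{\mathrm{tail}}}\,a_{t-1,s}^{-1}\bigr)$ is not in $L^1$ (the multiplier is not even bounded). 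So the key estimate $\norm{k^{\mathrm{tail}}*v_i}{L^p}\le\norm{L}{L^1}\norm{v_i}{H_p^{t-1,s}}$ fails, and with it the claim that $k^{\mathrm{loc}}*v_i$ is an $L^p$ function: the ``tail'' of your splitting is not of lower order, because it still carries the full stable-direction part of the operator, which gains no regularity in $x$.

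Nor is this repairable inside your scheme: cutting $k$ off near its actual singular set, i.e.\ on a slab $\{|x|\le\epsilon_0\}$, makes the local pieces $k^{\mathrm{loc}}*v_i$ supported in sets extended arbitrarily far in the stable direction, so they no longer inherit the intersection multiplicity $m$ of the original supports --- and the statement requires the leading constant to be $\Cs m^{p-1}$ with $\Cs$ independent of the configuration. The anisotropic factor has to be dealt with by a genuinely different mechanism; note that the present paper does not prove this lemma at all but quotes it as Lemma 32 of \cite{baladi_gouezel_piecewise}, and that proof does not proceed by splitting the kernel of $a_{t,s}$ at a spatial scale. As written, your argument establishes the lemma only in the isotropic case $s=0$, which is not the case needed here.
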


\subsection{The effect of composition on the local space \texorpdfstring{$H_p^{t,s}$}{Hpts}.}

In view of Theorem \ref{MainTheorem}, we describe how the local
spaces $H_p ^{t,s}$  behave under  composition with hyperbolic
matrices and appropriate maps preserving the stable leaves.

The following lemma is a particular case of \cite[Lemma
25]{baladi_gouezel_piecewise}.

\begin{lemma}
\label{CompositionDure}
For all $s<0<t$ and $t-|s|<0$, for all $p\in (1,\infty)$,
and every $t'<t$ there exists a constant $\Cs$ (depending only on
$t$, $s$, $p$, $t'$) so that the following holds:
Let $D=\left(\begin{smallmatrix} A&0\\0&B
\end{smallmatrix}\right)$
be a block diagonal matrix  such that $|Av|\geq \lambda_u |v|$
and $|Bv|\leq \lambda_s |v|$ for $\lambda_u>1$ and
$\lambda_s<1$. Then there exists a constant $C$ such that, for
all $\omega\in H_p^{t,s}$,
  \begin{align*}
  \norm{\omega\circ D^{-1}}{H_p^{t,s}}&\leq \Cs
  |\det D|^{1/p} \max(\lambda_u^{-t}, \lambda_s^{-(t+s)})\norm{\omega}{H_p^{t,s}}
   + C \norm{\omega}{H_p^{t',s}} \, .
  \end{align*}
\end{lemma}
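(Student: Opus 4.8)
This is the special case of \cite[Lemma~25]{baladi_gouezel_piecewise} in which the map is the linear map $D$; for completeness I indicate the argument. The first move is to reduce to \emph{diagonal} $D$. Composition with a rotation of $\R^{d_u}$ (acting on the $x$-variable) or of $\R^{d_s}$ (acting on the $y$-variable) is an isometry of $H_p^{t,s}$, since $(1-\Delta)^{t/2}$ and $(1-\Delta_y)^{s/2}$ commute with such rotations and the latter preserve Lebesgue measure. Writing singular value decompositions $A=U_A\Sigma_u\transposee{V_A}$ and $B=U_B\Sigma_s\transposee{V_B}$ and absorbing the orthogonal factors, one is reduced to $D=\Sigma_u\oplus\Sigma_s$ with $\Sigma_u$ positive diagonal with entries $\ge\lambda_u>1$ and $\Sigma_s$ positive diagonal with entries $\le\lambda_s<1$ (and $|\det D|$, $\lambda_u$, $\lambda_s$, the target norms are unchanged).

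Next I would pass to the Fourier side. Writing $J_{t,s}=\FFF^{-1}(a_{t,s}\FFF\,\cdot)$ for the isometry $H_p^{t,s}\to L^p$ and using $\FFF(w\circ D^{-1})(\zeta)=|\det D|\,\FFF w(D\zeta)$ (here $\transposee D=D$), one gets
  \begin{equation*}
  \norm{\omega\circ D^{-1}}{H_p^{t,s}}=\norm{\FFF^{-1}\!\bigl(m\,\FFF((J_{t,s}\omega)\circ D^{-1})\bigr)}{L^p}\,,\qquad
  m(\zeta):=\frac{a_{t,s}(\zeta)}{a_{t,s}(D\zeta)}\,.
  \end{equation*}
Since composition with the linear map $D^{-1}$ multiplies the $L^p$ norm by exactly $|\det D|^{1/p}$, it suffices to split $m=m_1+m_2$ and to bound separately the $L^p$ multiplier norm of $m_1$ by $\Cs\max(\lambda_u^{-t},\lambda_s^{-(t+s)})$ with $\Cs$ depending only on $t,s,p$, and the contribution of $m_2$ by $C\norm{\omega}{H_p^{t',s}}$ with $C$ also allowed to depend on $D$. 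I would take $\Theta$ a smooth cutoff equal to $1$ on $\{|\zeta|\le2\lambda_s^{-1}\}$, supported in $\{|\zeta|\le4\lambda_s^{-1}\}$, and set $m_2=\Theta m$, $m_1=(1-\Theta)m$.

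For $m_1$ one first checks the pointwise bound $|m(\zeta)|\le\Cs\max(\lambda_u^{-t},\lambda_s^{-(t+s)})$ on $\supp(1-\Theta)$, distinguishing the regime $|\xi|\ge|\eta|$, where $1+|D\zeta|^2\ge1+\lambda_u^2|\xi|^2$ and $|\xi|>1$ force a gain $\lambda_u^{-t}$ from the full-Laplacian factor while the remaining factor is $\le1$, from the regime $|\eta|>|\xi|$ (hence $|\eta|\ge\lambda_s^{-1}$), where the net exponent carried by the $\eta$-variable in $a_{t,s}$ is $t+s=t-|s|<0$, so shrinking $\eta$ by $\Sigma_s$ yields a gain $\lambda_s^{-(t+s)}$; this is the only place the hypothesis $t-|s|<0$ enters. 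The Marcinkiewicz (product Mikhlin) conditions $\nor{\zeta^{\gamma}\partial^{\gamma}m_1}_{\infty}\le\Cs\max(\lambda_u^{-t},\lambda_s^{-(t+s)})$ for $\gamma\in\{0,1\}^d$ follow from a logarithmic derivative computation: differentiating $m$ in $\zeta_i$ brings down a \emph{single} diagonal entry $d_i$ of $D$, producing a term of the shape $d_i^2\zeta_i/(1+|D\zeta|^2)$ times $m$, whose product with $\zeta_i$ is $\le d_i^2\zeta_i^2/(1+d_i^2\zeta_i^2)\le1$ uniformly in $D$; and $1-\Theta$ only contributes Marcinkiewicz-admissible factors (its $\zeta_i$-derivative lives on $|\zeta_i|\sim\lambda_s^{-1}$ with size $\sim\lambda_s$). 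By the Marcinkiewicz multiplier theorem the $L^p$ multiplier norm of $m_1$ is $\le\Cs\max(\lambda_u^{-t},\lambda_s^{-(t+s)})$, so the $m_1$-part of $\norm{\omega\circ D^{-1}}{H_p^{t,s}}$ is $\le\Cs\max(\lambda_u^{-t},\lambda_s^{-(t+s)})|\det D|^{1/p}\norm{\omega}{H_p^{t,s}}$. For $m_2$, undoing the conjugation gives that its contribution equals $|\det D|^{1/p}\norm{\FFF^{-1}(\tilde m_2\,a_{t,s}\FFF\omega)}{L^p}$ with $\tilde m_2(\theta)=m_2(D^{-1}\theta)$ supported in $D(\supp\Theta)$, a \emph{bounded} set (its $y$-coordinates are $\lesssim1$ since the entries of $\Sigma_s$ are $\le\lambda_s$ while $\supp\Theta\subset\{|\zeta|\le4\lambda_s^{-1}\}$, and its $x$-coordinates are $\lesssim\Lambda_u\lambda_s^{-1}$); on such a bounded set $a_{t,s}/a_{t',s}=\langle\,\cdot\,\rangle^{t-t'}$ is bounded, so $\tilde m_2\,a_{t,s}/a_{t',s}$ is smooth and compactly supported, hence an $L^p$ multiplier with finite ($D$-dependent) norm, and applying it to $J_{t',s}\omega$ yields the bound $C\norm{\omega}{H_p^{t',s}}$. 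Adding the two contributions gives the lemma.

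The step I expect to be the main obstacle is the Marcinkiewicz verification for $m_1$: a crude estimate of the derivatives of $m$ picks up the condition numbers $\Lambda_u/\lambda_u$ and $\lambda_s/\sigma_{\min}(B)$ of $D$, which would destroy the uniformity of $\Cs$. The cure is precisely the preliminary diagonalization (so that each derivative brings down one diagonal entry, absorbed by the matching term of $1+|D\zeta|^2$), together with the choice of the cutoff scale $\lambda_s^{-1}$, which is exactly what makes $\supp(\Theta m)$ have bounded image under $D$ and hence makes the $m_2$ error term an honest lower-order term.
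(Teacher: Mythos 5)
Your argument is correct: the preliminary block-wise SVD reduction is what makes the Mikhlin--Marcinkiewicz constants for $m_1=(1-\Theta)\,a_{t,s}/a_{t,s}(D\cdot)$ uniform in $D$, the pointwise bound in the two frequency regimes (using $t>0$ when $|\xi|\ge|\eta|$, and $t-|s|<0$ together with the cutoff scale $\lambda_s^{-1}$ when $|\eta|>|\xi|$) yields exactly the factor $\max(\lambda_u^{-t},\lambda_s^{-(t+s)})$, and the low-frequency piece $m_2$ becomes, after undoing the conjugation, a smooth compactly supported multiplier whose ($D$-dependent) norm is absorbed into $C\norm{\omega}{H_p^{t',s}}$. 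The paper gives no proof of this lemma, citing it as a particular case of \cite[Lemma~25]{baladi_gouezel_piecewise}, and your multiplier argument is essentially the standard one behind that linear estimate, so the approach matches.
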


Adapting the second part of the proof of \cite[Lemma
25]{baladi_gouezel_piecewise} gives:

\begin{lemma}
\label{lemcomposeD1alpha}
Let $C>0$, and let $-\alpha<s<0<t<1$ with $\alpha t+|s|<
\alpha$. There exists a constant $C'>0$  so that for any
$\Psi\in D^1_{1+\alpha}(C)$  whose range contains a ball
$B(z,C_0^{1/2})$, and for any distribution $\omega\in
H_p^{t,s}$ supported in $B(z,C_0^{1/2}/2)$, the composition
$\omega\circ \Psi$ is well defined, and
  \begin{equation}
  \norm{\omega \circ \Psi}{H_p^{t,s}}\leq C' \norm{\omega}{H_p^{t,s}}\, .
  \end{equation}
\end{lemma}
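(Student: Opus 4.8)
The plan is to reduce the statement to the already-available composition estimate for block-diagonal hyperbolic matrices (Lemma \ref{CompositionDure}, or rather the ``no loss'' version of it that is implicit in the cited \cite[Lemma 25]{baladi_gouezel_piecewise}) together with the Leibniz-type multiplier estimate of Lemma \ref{Leib} and the multiplier estimate for characteristic functions, Lemma \ref{lem:multiplier}. The key point is that $\Psi$ preserves stable leaves and has uniformly bounded $C^1$ norm (with $D\Psi$ moreover $C^\alpha$ in the $y$-direction); a map of this type is, microlocally, a bounded perturbation of a map of the form $(x,y)\mapsto (A(y)x+b(y),C(y)y+e(y))$ which, after localizing, can be treated essentially like a triangular composition. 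First I would localize: pick a $C^\infty$ cutoff $\chi$ equal to $1$ on $B(z,C_0^{1/2}/2)$ and supported in $B(z,C_0^{1/2})$, so that for $\omega$ supported in $B(z,C_0^{1/2}/2)$ we have $\omega = \chi\omega$ and hence $\omega\circ\Psi = (\chi\circ\Psi)\cdot(\omega\circ\Psi)$; since the range of $\Psi$ contains $B(z,C_0^{1/2})$, the composition $\omega\circ\Psi$ is a well-defined compactly supported distribution, supported in $\Psi^{-1}(B(z,C_0^{1/2}/2))$, and by the $C^1$ control on $\Psi^{-1}$ this support is contained in a ball of controlled radius.

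The heart of the argument is the case $\omega$ supported in a \emph{small} ball: by a partition of unity subordinate to a fine grid (using Lemma \ref{lem:sum} to reassemble, at the cost of an arbitrarily small $H_p^{t-1,s}$ error which is harmless since $t-1<t$ and we allow a $t'<t$ loss), it suffices to estimate $\norm{(\eta_k\omega)\circ\Psi}{H_p^{t,s}}$ for $\omega$ supported near a single point $w_0$ in the range. Near $w_0$, write $\Psi = \ell_0 + \Psi_1$ where $\ell_0 = D\Psi(\Psi^{-1}(w_0))$ is the linearization and $\Psi_1$ is ``small'' in $C^1$ on the small ball (its derivative is $C^\alpha$ in $y$ and vanishes at the base point, so $\sup|D\Psi_1|$ on a ball of radius $\delta$ is $O(\delta^\alpha)$ in the $y$-direction and $O(\delta^\beta)$ — actually we only need $C^1$-smallness — in the $x$-direction). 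Because $\Psi$ preserves stable leaves, $\ell_0$ is block triangular of the form $\left(\begin{smallmatrix} A_0 & *\\ 0 & B_0\end{smallmatrix}\right)$; a linear change of coordinates in the $y$-variable (which is an allowed change in Lemma \ref{lem:multiplier} and which is innocuous for $H_p^{t,s}$ by the definition of the $a_{t,s}$ multiplier, since it does not mix stable and unstable frequencies in a damaging way — one checks $a_{t,s}$ is comparable under $y$-shears) reduces $\ell_0$ to block-diagonal form, and Lemma \ref{CompositionDure} (in its version without spectral-radius loss, valid since $\ell_0$ has \emph{bounded} $\|\ell_0\|$ and $\|\ell_0^{-1}\|$, so $\lambda_u,\lambda_s^{-1}$ are bounded above and below) gives $\norm{\omega\circ\ell_0^{-1}}{H_p^{t,s}}\le \Cs\norm{\omega}{H_p^{t,s}}$. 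The perturbation $\Psi_1$ is then absorbed by writing $\Psi = \ell_0\circ(\id + \ell_0^{-1}\Psi_1)$ and observing that $\id+\ell_0^{-1}\Psi_1$ is $C^1$-close to the identity with a Jacobian that is $C^{\min(\alpha,\beta)}$; composition with such a near-identity map is bounded on $H_p^{t,s}$ for $|t|+|s|<1$ by the stability of $L^p$ and $H_p^1$ under $C^1$ diffeomorphisms and interpolation/duality (exactly as in the functional-analytic preliminary at the start of the proof of Lemma \ref{embed}), using the hypothesis $\alpha t + |s| < \alpha$ to control the $y$-direction regularity. The uniform bounds make the small-ball estimate uniform, so summing over the partition via Lemma \ref{lem:sum} yields the claim with $C'$ depending only on $t,s,p,\alpha,C$ (and the fixed geometry).

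The main obstacle I expect is the bookkeeping in the reduction to block-diagonal form: one must verify that the permissible coordinate changes (the $y$-shears coming from the off-diagonal block of $\ell_0$, and the fact that $\Psi$ only preserves stable \emph{leaves} not stable \emph{directions pointwise}) genuinely leave the anisotropic norm $H_p^{t,s}$ comparable, and that the condition $\alpha t + |s| < \alpha$ is exactly what is needed for the $C^{1+\alpha}$-in-$y$ regularity of $\Psi$ to beat the $|s|$ derivatives demanded in the stable direction. This is precisely the content that ``adapting the second part of the proof of \cite[Lemma 25]{baladi_gouezel_piecewise}'' is meant to supply, so in the actual write-up I would state which inequalities from that proof carry over verbatim and which need the extra input that here $\Psi\in D^1_{1+\alpha}(C)$ rather than being an affine map, pointing to the $C^\alpha$-in-$y$ control of $D\Psi$ as the replacement for the exact linearity used there.
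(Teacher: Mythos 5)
There is a genuine gap, and it sits exactly at the step your reduction was supposed to make easy. After splitting $\Psi=\ell_0\circ(\id+\ell_0^{-1}\Psi_1)$, the entire difficulty of the lemma is still contained in the composition with the nonlinear, leaf-preserving factor $\id+\ell_0^{-1}\Psi_1$: you assert its boundedness on $H_p^{t,s}$ "by the stability of $L^p$ and $H_p^1$ under $C^1$ diffeomorphisms and interpolation/duality, exactly as in the functional-analytic preliminary at the start of the proof of Lemma~\ref{embed}", but that preliminary only concerns the \emph{isotropic} spaces $H_p^{\pm r}$ and does not transfer to the anisotropic $H_p^{t,s}$ with $s<0<t$ (which is not stable under general $C^1$ changes of variables, and whose stability under leaf-preserving ones is precisely what is being proved). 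The argument the paper actually runs — and the one your closing sentence gestures at without carrying out — is to localize by a cutoff and interpolate between the three spaces $H_p^{1,0}$, $L^p$ and $H_p^{0,-\alpha}$, which is where the hypothesis $\alpha t+|s|<\alpha$ enters (the point $(t,s)$ lies in the triangle with vertices $(1,0)$, $(0,0)$, $(0,-\alpha)$); the $H_p^{0,-\alpha}$ endpoint is handled by duality, using that for a leaf-preserving $\Psi\in D^1_{1+\alpha}(C)$ the Jacobian is $C^\alpha$ \emph{along stable leaves}. No linearization is needed or used, and without spelling out these endpoint bounds your proof is circular: the "perturbation" step quietly assumes the lemma.

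Moreover, the linearization-on-small-balls reduction itself does not work with uniform constants. The class $D^1_{1+\alpha}(C)$ controls $\sup|D\Psi|$, $\sup|D\Psi^{-1}|$ and the $\alpha$-H\"older modulus of $y\mapsto D\Psi(x,y)$, but imposes \emph{no} modulus of continuity of $D\Psi$ in the $x$-variable; hence $\Psi_1=\Psi-\ell_0$ need not be small in $C^1$ on a ball of radius $\delta$ uniformly over $\Psi\in D^1_{1+\alpha}(C)$ — the required $\delta$, and with it the number of pieces in your partition of unity and the final constant, would depend on $\Psi$, whereas $C'$ must depend only on $C$ (and $p,s,t,\alpha$). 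Two smaller inaccuracies point the same way: leaf preservation forces $\partial_y$ of the first component of $\Psi$ to vanish, so $\ell_0$ is lower block-triangular $\bigl(\begin{smallmatrix} A_0&0\\ *&B_0\end{smallmatrix}\bigr)$, not $\bigl(\begin{smallmatrix} A_0&*\\ 0&B_0\end{smallmatrix}\bigr)$ (the straightening map is a shear $(x,y)\mapsto(x,y-Px)$, as in Step 0 of Lemma~\ref{lemcompose}); and Lemma~\ref{CompositionDure} as stated assumes $\lambda_u>1$, $\lambda_s<1$ and carries a compact-term loss, so a "no-loss" version for arbitrary bounded block-diagonal $\ell_0$ would itself need a (short, multiplier-theorem) proof rather than a citation. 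The fix is to drop the linearization altogether and prove the three endpoint estimates for $\omega\mapsto(\gamma\omega)\circ\Psi$ directly, as in the second step of the proof of Lemma~25 of \cite{baladi_gouezel_piecewise}, noting that only the $H_p^{0,-\alpha}$ estimate uses H\"older regularity of the Jacobian and only along stable leaves.
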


\begin{proof}
Without loss of generality, we may assume $z=\Psi^{-1}(z)=0$.
Let $\gamma$ be a $C^\infty$ function equal to $1$ on
$B(0,C_0^{1/2}/2)$ and vanishing outside of $B(0,C_0^{1/2})$.
We want to show that the operator $\MM : \omega\mapsto (\gamma
\omega) \circ \Psi$ is bounded by $C'$ as an operator from
$H_p^{t,s}$ to itself. By interpolation, it is sufficient to
prove this statement for $H_p^{1,0}$, for $L^p$, and for
$H_p^{0,-\alpha}$. This is done in the second step of the proof
of Lemma 25 in \cite{baladi_gouezel_piecewise} -- the result
there is formulated for $C^{1+\alpha}$ diffeomorphisms, but a
glance at the proof there indicates that the $C^\alpha$
regularity of the jacobian is only used along the stable
leaves, in the argument for $H_p^{0,-\alpha}$, and the
definition of $D^1_{1+\alpha}(C)$ ensures that the jacobian is
indeed regular along stable leaves.
\end{proof}


\section{Proof of the main theorem on piecewise cone hyperbolic maps}
\label{mainsec}

In this section, we prove Theorem \ref{MainTheorembis} and
Proposition~ \ref{propinvar}.

We may fix once and for all a constant $C_0>1$ large enough so
that the assumptions of Lemma~\ref{lemcompose} are satisfied
for the finite set $\CC_{i,j}$ of extended cones chosen in
Section~ \ref{spaces}.

The following lemma implies Theorem \ref{MainTheorembis} since
the inclusion of $\HHH_p^{t,s}$ into $\HHH_p^{t',s}$ is compact
for $s<0<t$ if $t'<t$, and $t+|s|<1$, $t-|s|>-\beta$, by
Lemma~\ref{embed}.

\begin{lemma}
\label{maintechlemma}
Let $\alpha$, $T$, $g$, $p$ be as in Theorem ~
\ref{MainTheorem} and let $1/p-1<s<0<t<1/p$, with
$\alpha|s|+t<\alpha$. For any $t'<t$ there is $\Cs$ so that,
for any $N$, if $C_1$ is large enough, then for any large
enough $n$ which is a multiple of $N$, and for any large enough
$R$, there exists $D_n$ so that
 \begin{multline}
  \label{eq:main}
  \norm{\LL_g^n \omega}{\HHH^{t,s}_p(R,C_0,C_1)}^p\leq D_n \norm{\omega}{\HHH_p^{t',s}(R,C_0,C_1)}^p
  + \Cs (\Cs N^p)^{n/N} D_n^b (D_n^e)^{p-1} \times
  \\ \times\norm{ |\det DT^n|
  \max(\lambda_{u,n}^{-t}, \lambda_{s,n}^{-(s+t)})^p
  |g^{(n)}|^p}{L^\infty}\norm{\omega}{\HHH^{t,s}_p(R,C_0,C_1)}^p \, .
\end{multline}
\end{lemma}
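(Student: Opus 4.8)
The strategy is the standard Lasota--Yorke scheme, with all the work going into tracking how the admissible-foliation norm \eqref{defnormen} transforms under $\LL_g^n$. First I would unfold $\LL_g^n\omega=\sum_{\ii\in I^n} 1_{T^n_\ii(O_\ii)}\,(g^{(n)}\omega)\circ (T^n_\ii)^{-n}$, and for a fixed system of charts $\Phi'=\{\bphi'_\bm\}$ for the target space estimate, for each $\bm\in\ZZ(R)$,
\[
\norm{(\brho'_\bm\,1_{O_\bm}\,\LL_g^n\omega)\circ\bphi'_\bm}{H^{t,s}_p}.
\]
The first step is to localize: write the indicator $1_{O_\bm}$ (via the transversality assumption, which controls the number of connected components of $O_i$ along the stable leaves of a chart, cf.\ Lemma~\ref{lem:multiplier}) and the partition of unity $\brho'_\bm$ so that the term splits into contributions indexed by those $\ii$ with $q_\bm\in\overline{T^n(O_\ii)}$; there are at most $D^e_n$ of them, which is where that complexity factor enters. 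For each such $\ii$, working in the chart $\bphi'_\bm$ and pulling back by $T^n_\ii$, one must introduce a \emph{preimage} chart: this is exactly the content of Lemma~\ref{lemcompose} (applied iteratively, $N$ steps at a time, for $n/N$ blocks), which produces an admissible chart $\phi$ and a factorization $\TTT=\Psi\circ D^{-1}\circ\Psi_m$ with $\Psi,\Psi_m\in D^1_{1+\alpha}(C)$ and $D$ block-diagonal with hyperbolic bounds $|Av|\ge C^{-1}\lambda_u|v|$, $|Bv|\le C\lambda_s|v|$.

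The second step is the core estimate: using the factorization, bound
\[
\norm{(\text{stuff})\circ\bphi'_\bm}{H^{t,s}_p}
\le \norm{\,\cdot\,\circ\Psi}{}\ \text{then}\ \norm{\,\cdot\,\circ D^{-1}}{}\ \text{then}\ \norm{\,\cdot\,\circ\Psi_m}{}\,,
\]
applying Lemma~\ref{lemcomposeD1alpha} (which requires $\alpha t+|s|<\alpha$ and uses the range/support information on $\Psi,\Psi_m$ from Lemma~\ref{lemcompose}(c)) to absorb the two diffeomorphisms at cost of a constant, and Lemma~\ref{CompositionDure} for $D^{-1}$, which yields the crucial gain factor $|\det D|^{1/p}\max(\lambda_u^{-t},\lambda_s^{-(t+s)})$ plus a lower-order $H^{t',s}_p$ remainder. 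The weight $g^{(n)}$ and the chart Jacobians are handled by the Leibniz estimate Lemma~\ref{Leib} (legitimate since $g$ restricted to each $O_i$ is $C^\regg$ with $\regg>t+|s|$, and the charts have $C^{1+\beta}$-bounded Jacobians by Lemma~\ref{lempropphi}). Converting the $D$-dependent bounds back to the dynamical quantities: $|\det D|\sim |\det DT^n_\ii|$ up to bounded factors, $\lambda_u(D)\gtrsim\lambda_{\ii,u}^{(n)}\ge\lambda_{u,n}$, $\lambda_s(D)\lesssim\lambda_{\ii,s}^{(n)}\le\lambda_{s,n}$; taking $L^\infty$ norms over $q$ gives the factor $\norm{|\det DT^n|\max(\lambda_{u,n}^{-t},\lambda_{s,n}^{-(s+t)})^p|g^{(n)}|^p}{L^\infty}$.

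The third step is reassembly. Summing the contributions over the at most $D^e_n$ relevant $\ii$ and over $\bm$, one applies the superposition/partition-of-unity estimates Lemma~\ref{lem:sum} and Lemma~\ref{lem:localization}: the $\ell^p$ triangle inequality over $D^e_n$ terms costs $(D^e_n)^{p-1}$, while the number of $\bm$ into which a single $\ii$-term spreads is controlled by $D^b_n$ and the bounded intersection multiplicity, giving the $D^b_n$ factor. The $(\Cs N^p)^{n/N}$ comes from the $n/N$ iterations of Lemma~\ref{lemcompose}, each contributing a bounded constant $\Cs$ (depending on $N$ through the block length), and from the reparametrization constants in Lemma~\ref{lemcomposeD1alpha}. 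Finally one takes the supremum over all target chart systems $\Phi'$ on the left and bounds the right-hand side by the supremum-norms $\norm{\cdot}{\HHH^{t,s}_p}$ and $\norm{\cdot}{\HHH^{t',s}_p}$, collecting everything that is not the principal term into the coefficient $D_n$ of the $\HHH^{t',s}_p$-norm; note $D_n$ may depend on $n$, $R$, $C_1$ but not on $\omega$, which is all that is needed since Lemma~\ref{embed} makes that inclusion compact.

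\textbf{Main obstacle.} The delicate point is \emph{not} any single estimate but the bookkeeping that keeps the principal constant independent of $R$ and of $C_1$ (so that later, in deducing Theorem~\ref{MainTheorembis}, letting $C_1,R\to\infty$ does not destroy the bound), while simultaneously ensuring the \emph{domains} match up: each application of Lemma~\ref{lemcompose} produces a chart defined only on $B(m,C_0)$, and the support of the pulled-back bump function must land inside $B(\Pi m,C_0^{1/2}/2)$ so that Lemma~\ref{lemcomposeD1alpha} applies — this forces the iteration to be done in blocks of fixed length $N$ (rather than one step at a time), with the gluing step (Lemma~\ref{lemcompose}(a)--(b)) invoked to prevent the exponential proliferation of charts, and it is the source of the $(\Cs N)^{n/N}$ factor that one must be careful \emph{not} to improve into something worse. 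The transversality hypothesis is essential precisely here, to guarantee that $1_{O_\bm}\circ\bphi'_\bm$ and $1_{T^n(O_\ii)}$ pulled into the charts remain bounded multipliers on $H^{t,s}_p$ via Lemma~\ref{lem:multiplier}.
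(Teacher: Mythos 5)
Your overall Lasota--Yorke architecture and your roster of lemmas (\ref{lem:sum}, \ref{lem:localization}, \ref{lem:multiplier}, \ref{Leib}, \ref{CompositionDure}, \ref{lemcomposeD1alpha}, \ref{lemcompose}) are the right ones, but the two points where you locate the role of $N$ are wrong, and they concern exactly how the quantifier order of the lemma (first $N$, then $C_1$, then $n$, then $R$) is earned. First, Lemma~\ref{lemcompose} cannot be applied iteratively in blocks of $N$ steps: its hypothesis \eqref{suffhyp} requires $\lambda_u(M)>\epsilon^{-1}$, $\lambda_s(M)^{-1}>\epsilon^{-1}$ and a bunching quantity $<\epsilon$, where $\epsilon=\epsilon(C_0,C_1)$; since $C_1$ is chosen large \emph{after} $N$, a fixed power $T^N$ seen in charts can never satisfy these inequalities. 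In the paper the lemma is applied once per branch, to $\TT=\kappa^R_{i,j}\circ T^n_\ii\circ(\kappa^R_\bell)^{-1}$, with $n$ taken large after $C_1$ precisely so that \eqref{suffhyp} holds; the gluing over the boundedly many well-separated families $\ZZ^e$ is what prevents chart proliferation and it costs only a constant, not $(\Cs N^p)^{n/N}$.

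Second --- and this is the genuine gap --- you dispose of $1_{T^n_\ii O_\ii}$ by a bare appeal to Lemma~\ref{lem:multiplier}, but the boundary of $T^n_\ii O_\ii$ consists of order $n$ hypersurfaces. To apply the multiplier lemma one needs a basis of directions uniformly transverse to \emph{all} of them, with a transversality angle that shrinks as their number grows, and the nonlinear admissible chart $\phi_\bm$ must distort directions by less than that angle; doing this in one stroke would force $C_1$ to grow with $n$, which destroys the quantifier structure needed to deduce Theorem~\ref{MainTheorembis}. This is the actual purpose of the block length $N$: the paper factorizes $1_{T^n_\ii O_\ii}$ into $n/N$ indicator factors, each of a set whose boundary has at most $LN$ hypersurfaces, applies the geometric lemma to get an angle depending only on $N$, and concludes that each factor is a bounded multiplier of norm $\Cs N$ on $H^{t,s}_p$ once $C_1$ is large depending on $N$ alone. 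This, and not repeated graph transforms or the reparametrization constants of Lemma~\ref{lemcomposeD1alpha}, is the origin of the factor $(\Cs N^p)^{n/N}$; without this step your argument either needs an $n$-dependent $C_1$ or loses control of the multiplier norm altogether.
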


\begin{proof}[Proof of Lemma ~ \ref{maintechlemma}]
To simplify notation, we  write $x\leqc y$ if $x\leq y$ up to
compact terms, i.e., terms which are controlled by
$\norm{\omega}{\HHH_p^{t',s}(R,C_0,C_1)}$ for some $t'<t$. Note
that if $t''<t$ is such that $t''<t'$, then an upper bound in
terms of $t''$ trivially implies the upper bound for $t'$
because $\norm{\omega}{H^{t'',s}_p}\le C
\norm{\omega}{H^{t',s}_p}$. Conversely, an upper bound in terms
of $t'$ implies the upper bound for $t''$ because, for any
$\epsilon>0$, there exists a constant $C(\epsilon)$ so that for
all $v$
  \begin{equation*}
  \norm{\omega}{H^{t',s}_p}\le \epsilon \norm{\omega}{H^{t,s}_p}+ C(\epsilon)\norm {\omega}{H^{ t'',s}_p} \, .
  \end{equation*}
(The above bound is proved just like \eqref{goodcharts}.) We
shall apply the above remark implicitly whenever we have a
bound $x\leqc y$. This allows us to replace $t''<0$ by $0<t'
<t$ when invoking Lemmas~ \ref{Leib} or
~\ref{lemcomposeD1alpha}.

Before starting the proof, let us describe the order in which
we choose the constants. First, $N$ is fixed in the statement
(it will be used in the second step of the proof in order to
apply Lemma ~\ref{lem:multiplier}).
Then, we choose $C_1$ very large, in the second step below, so
that the admissible charts $\phi_\bm$ are close enough to
linear maps ($C_1$ depends on $N$). Then, we fix $n$
to be some very
large multiple of $N$, depending on $C_1$ (it should be large enough so that
every branch of $T^n$ is hyperbolic enough so that
Lemma~\ref{lemcompose} applies). Finally, we choose $R$ very large so
that, at scale $1/R$, all the iterates of $T$ up to time $n$
look like linear maps, and all the boundaries of the sets we
are interested in look like hyperplanes. For the presentation
of the argument, we will start the proof with some values of
$C_1, n,R$, and increase them whenever necessary, checking each
time that $C_1$ does not depend on $n,R$, and that $n$ does not
depend on $R$, to avoid bootstrapping issues. We will denote by
$\Cs$ a constant that does not depend on $N,C_1,n,R$, and may
vary from line to line.

\medskip

For every $\ii \in I^n$, we fix a small neighborhood $\tilde
O_\ii$ of $\overline{O_\ii}$ such that $T_\ii$ admits an
extension to $\tilde O_\ii$ with the same hyperbolicity
properties as the original $T_\ii$. Reducing these sets if
necessary, we can ensure that their intersection multiplicity
is bounded by $D_n^b$, and that the intersection multiplicity
of the sets $T_\ii \tilde{O_\ii}$ is bounded by $D_n^e$.

For $\bm=(i,j,m)\in \ZZ(R)$, let us write
$$A(\bm)=A(\bm,R)=(\kappa_\bm^R)^{-1}(B(m,d)) \subset X\, .
$$
The set $A(\bm)$ is a neighborhood of $q_\bm$, of diameter
bounded by $\Cs R^{-1}$, and containing the support of
$\brho_{\bm}$.

Let us fix some system of charts $\Phi$ as in the Definition ~
\ref{defnorm} of the ${\HHH_p^{t,s}(R, C_0, C_1)}$-norm. We
want to estimate $\norm{\LL_g^n \omega}{\Phi}$.

\emph{First step.} The  sets $\{T_\ii^n(O_\ii)\st \ii \in I^n\}$
have intersection multiplicity at most $D_n^e$.
Writing\footnote{Elements of $L^\infty$ are defined almost everywhere,
and the transfer operator is defined initially on $L^\infty$,
so the fact that $\bigcup_i O_i=X_0$ only modulo a zero
Lebesgue measure set is irrelevant.} $\LL_g^n \omega=\sum_{\ii}
1_{T_\ii^n O_\ii}(g^{(n)} \omega)\circ T_\ii^{-n}$, we get by
Lemma \ref{lem:sum} that for each $\bm \in \ZZ(R)$
  \begin{multline*}
  \norm{(\brho_\bm\cdot 1_{O_\bm} \LL_g^n \omega)\circ
  \bphi_\bm}{H_p^{t,s}}^p
  \\ \leqc \Cs (D_n^e)^{p-1} \sum_{\ii \in I^n}
  \norm{(\brho_\bm 1_{O_\bm} 1_{T_\ii^n O_\ii}(g^{(n)} \omega)\circ T_\ii^{-n})\circ \bphi_\bm}{H_p^{t,s}}^p \, .
  \end{multline*}
Summing over $\bm \in \ZZ(R)$, we obtain
  \begin{equation*}
  \norm{\LL_g^n \omega}{\Phi}^p \leqc
   \Cs (D_n^e)^{p-1}
  \sum_{\bm \in \ZZ(R), \ii\in I^n}
  \norm{(\brho_\bm 1_{O_\bm} 1_{T_\ii^n O_\ii}(g^{(n)} \omega)\circ
  T_\ii^{-n})
  \circ \bphi_\bm}
  {H_p^{t,s}}^p \, .
  \end{equation*}

For $i\in I$, let  $U_{i,j,2}$, $1\leq j\leq N_i$, be arbitrary
open sets covering a fixed neighborhood $\tilde O_i^0$ of
$\overline{O_i}$, such that $\overline{U_{i,j,2}}\subset
U_{i,j,1}$ (they do not depend on $n$, $R$, or any other
choice). For each $\bm\in \ZZ(R)$, and
$\ii=(i_0,\dots,i_{n-1})\in I^n$ such that $T_\ii^n O_\ii$
intersects $A(\bm)$, the point $T_\ii^{-n}(q_\bm)$ belongs to
$\tilde O_{i_0}^0$ if $R$ is large enough, we can therefore
consider $k$ such that it belongs to $U_{i_0,k,2}$. Then
$\sum_{\ell\in \ZZ_{i_0,k}(R)}\brho_{i_0,k,\ell}$ is equal to
$1$ on a neighborhood of fixed size of $T_\ii^{-n}(q_\bm)$, so
that $\sum_{\ell\in \ZZ_{i_0,k}(R)} \brho_{i_0,k,\ell} \circ
T_\ii^{-n}$ is equal to $1$ on $A(\bm)$ if $R$ is large enough
(depending on $n$ but not on $\Phi$ or $\bm$). Since the
intersection multiplicity of the supports of the
$\brho_{i_0,k,\ell}\circ T_\ii^{-n}$ is uniformly bounded,
Lemma \ref{lem:sum} gives, if $R$ is large enough (uniformly in
$\Phi$, $\bm$, $k$, $\ii$)
  \begin{multline*}
  \norm{
	(\brho_{\bm}1_{O_\bm} 1_{T_\ii^n O_\ii}(g^{(n)} \omega)
	\circ T_\ii^{-n}) \circ \bphi_\bm
  }{H_p^{t,s}}^p
  \\
  \leqc
  \Cs \sum_{\ell\in \ZZ_{i_0,k}(R)}
  \norm{
	(\brho_\bm 1_{O_\bm} 1_{T_\ii^n O_\ii}(\brho_{i_0,k,\ell} \cdot g^{(n)} \omega)
	\circ T_\ii^{-n}) \circ \bphi_\bm
  }{H_p^{t,s}}^p \, .
  \end{multline*}

Taking $R$ large enough and summing over $\bm\in \ZZ(R)$,
$\ii\in I^n$ and $k$ in $\{1, \ldots, N_{i_0}\}$ such that
$T_\ii^{-n}(q_\bm) \in U_{i_0,k,2}$,  we get (writing
$\bell=(i_0,k,\ell)\in \ZZ(R)$)
  \begin{equation}
  \label{klqjsdfml}
  \norm{\LL_g^n \omega}{\Phi}^p
  \leqc \Cs (D_n^e)^{p-1} \sum_{\bm, \ii, \bell}
  \norm{(\brho_\bm 1_{O_\bm} 1_{T_\ii^n O_\ii}( \brho_\bell \cdot g^{(n)} \omega)\circ T_\ii^{-n})
  \circ \bphi_\bm}
  {H_p^{t,s}}^p \, ,
  \end{equation}
where the sum is restricted to those $(\bm,\ii,\bell)$ such
that the support of $\brho_{\bell}$ is included in $\tilde
O_\ii$, the support of $\brho_\bm$ is included in $T_\ii \tilde
O_\ii$, and $O_\bell=O_{i_0}$ (this restriction will be
implicit in the rest of the proof).

\emph{Second step: Getting rid of the characteristic function.}
We claim that, if $R$ is large enough, then for any $\bm$,
$\ii$, $\bell$ as in the right-hand-side of (\ref{klqjsdfml})
  \begin{multline}
  \label{multipl}
  \norm{(\brho_\bm 1_{O_\bm} 1_{T_\ii^n O_\ii}( \brho_\bell \cdot g^{(n)} \omega)\circ T_\ii^{-n})
  \circ \bphi_\bm}
  {H_p^{t,s}}^p
  \\ \leq
  \Cs (\Cs N^p)^{n/N} \norm{(\brho_\bm ( 1_{O_\bell} \brho_\bell \cdot g^{(n)} \omega)\circ T_\ii^{-n})
  \circ \bphi_\bm}
  {H_p^{t,s}}^p \, .
  \end{multline}
Note that $1_{T_\ii^n O_\ii}=1_{T_\ii^n O_\ii}\cdot
(1_{O_\bell}\circ T_\ii^{-n})$. Hence, to prove this
inequality, it is sufficient to show that the multiplications
by $1_{O_\bm}\circ \bphi_\bm$ and by $1_{T_\ii^n O_\ii}\circ
\bphi_\bm$ act boundedly on $H_p^{t,s}$, with norms bounded
respectively by $\Cs$ and $(\Cs N^p)^{n/N}$. We shall show the
latter, the former is similar. Let $\kappa = n/N$, we decompose
$\ii=(i_0,\dots,i_{n-1})$ into subsequences of length $N$, as
$(\ii_0,\dots,\ii_{\kappa-1})$. Then $1_{T_\ii^n O_\ii} =
\prod_{j=0}^{\kappa-1} 1_{O_{\ii_j}} \circ
T_{\ii_j\ii_{j+1}\dots \ii_{\kappa-1}}^{-(\kappa-j)N}$. Define
a set $P_j=T_{\ii_j\ii_{j+1}\dots
\ii_{\kappa-1}}^{(\kappa-j)N}(O_{\ii_j})$, it is therefore
sufficient to show that each multiplication by $1_{P_j} \circ
\bphi_\bm$ acts boundedly on $H_p^{t,s}$, with norm at most $\Cs
N^p$. Let us fix such a set $P=P_j$. Locally, its boundary is
contained in the images of the boundaries of the sets $O_i$
under iterates of the map $T$. Let $L>0$ be such that the
boundary of each $O_i$, $i\in I$, is made of at most $L$
hypersurfaces, it follows that the boundary of $P$ is made of
at most $LN$ hypersurfaces $Q_{h}$ (which are all uniformly
transverse to the stable cone).

We wish to use our transversality assumption to apply Lemma~
\ref{lem:multiplier}. Write $\bm=(i,j,m)$. Since the support of
$\brho_\bm \circ (\kappa_\bm^R)^{-1}=\rho_m$ is contained in
the ball $B(m,d)$, it is sufficient to prove the bounded
multiplier property for distributions supported in
$\phi_\bm^{-1}(B(m,d))$. In $B(m,d)$, the boundary of the set
$\kappa_\bm^R(P)$ is contained in $\bigcup \kappa_\bm^R(Q_h)$.
If $R$ is large enough, all the hypersurfaces
$\kappa_\bm^R(Q_h)$ look like hyperplanes in $\R^d$.

\medskip

We will need the following easy geometrical lemma.
\begin{lemma}
For any $\delta>0$, $\delta'>0$ and $M>0$, there exists
$\epsilon>0$ satisfying the following property. Consider $M$
hyperplanes $H_1,\dots, H_M$ in $\R^d$, such that every $H_j$
contains a $d_u$-dimensional subspace $E_j$ making an angle at
least $\delta$ with $\{0\}\times \R^{d_s}$. Then
\begin{itemize}
\item For any unit vector $f\in \R^d$, there exists a
vector $e\in \R^d$ with $|e-f| \leq \delta'$ making an
angle at least $\epsilon$ with every $H_j$.
\item For any unit vector $f\in \{0\}\times \R^{d_s}$, there exists a
vector $e\in \{0\}\times\R^{d_s}$ with $|e-f| \leq \delta'$
making an angle at least $\epsilon$ with every $H_j$.
\end{itemize}
\end{lemma}
The first point is proved by arguing that the measure of the
$\epsilon$--neighborhood of $H_j$ in the ball $B(f,\delta')$
tends to $0$ when $\epsilon$ tends to $0$. Therefore, if
$\epsilon$ is small enough, there exists a vector $e$ in
$B(f,\delta')$ avoiding all those neighborhoods, hence
satisfying the required conclusion. For the second point, we
obtain in the same way a vector $e\in \{0\}\times\R^{d_s}$ with
$|e-f| \leq \delta'$ which is $\epsilon$-transverse to $H_j
\cap( \{0\}\times \R^{d_s})$ for $1\leq j\leq M$. Since $E_j$
in the assumptions is uniformly transverse to $e$, the result
follows.

\medskip
Let us fix $\delta'>0$ so that any family $e_1,\dots,e_d$ which
is $\delta'$-close to the canonical orthonormal basis
$(f_1,\dots,f_d)$ of $\R^d$ is still a basis, and the matrices
of the coordinate changes are bounded by a constant $\Cs$.

The pullback of every hypersurface $\kappa_\bm^R(Q_h)$ under
the differential $D\phi_\bm(m)$ is very close to an hyperplane
in $\R^d$. Applying the lemma with $M=LN$, we therefore obtain
vectors $e_1,\dots,e_d$ which are $\delta'$-close to an
orthonormal basis of $\R^d$,  such that $e_{d_u+1},\dots,e_d$
form a basis of $\{0\}\times \R^{d_s}$, and which make
everywhere an angle at least $\epsilon$ with the hypersurfaces
$\kappa_\bm^R(Q_h)$, for some $\epsilon>0$ depending solely on
$N$.

Consider now a straight line directed by one of the vectors
$e_l$. Its image under $\phi_\bm$ is not anymore a straight
line. However, if $\phi_\bm$ is very close to a linear map
(which is true if $C_1$ is large enough), then it will almost
be a straight line. In particular, its direction will deviate
by at most $\epsilon/2$, hence it will be transverse to the
hypersurface $\kappa_\bm^R(Q_h)$, and it will intersect it in
at most one point.

We have proved that, if $C_1$ is large enough, then any line
$S$ directed by one of the vectors $e_l$ intersects each
boundary hypersurface of $\bphi_\bm^{-1}(P)$ in at most one
point. Since $\bphi_\bm^{-1}(P)$ has at most $NL$ boundary
hypersurfaces, $S$ intersects $\bphi_\bm^{-1}(P)$ along at most
$NL$ connected components. Therefore,
Lemma~\ref{lem:multiplier} (together with our assumption that
$1/p-1<s<0<t<1/p$) implies that the multiplication by the
characteristic function of this set acts boundedly on
$H_p^{t,s}$, with a norm bounded by $\Cs NL$. This proves
~\eqref{multipl}.

Combining~\eqref{multipl} with~\eqref{klqjsdfml}, we get
  \begin{equation}
  \label{2step}
  \norm{\LL_g^n \omega}{\Phi}^p
  \leqc \Cs (\Cs N^p)^{n/N} (D_n^e)^{p-1} \sum_{\bm, \ii, \bell}
  \norm{(\brho_\bm( 1_{O_\bell}\brho_\bell \cdot g^{(n)} \omega)\circ T_\ii^{-n})
  \circ \bphi_\bm}
  {H_p^{t,s}}^p \, .
  \end{equation}

\emph{Third step: Using the composition lemma.} The right hand
side of \eqref{2step} involves a sum over $\bell$ and $\bm$,
and has therefore too many terms. In this step, we shall use
Lemma~\ref{lemcompose}, to pull the charts $\Phi_\bm$ back at
time $-n$, and  glue some of the pulled-back charts together to
get rid of the summation over $\bm$.

Let us partition $\ZZ(R)$ into finitely many subsets
$\ZZ^1,\dots,\ZZ^E$ such that $\ZZ^e$ is included in one of the
sets $\ZZ_{i,j}(R)$, and $|m-m'|\geq C(C_0)$ whenever
$(i,j,m)\not=(i,j,m')\in \ZZ^e$, where $C(C_0)$ is the constant
$C$ constructed in Lemma~\ref{lemcompose} (it only depends on
$C_0$). The number $E$ may be chosen independently of $n$.

We shall prove the following: For any $\bell \in \ZZ(R)$, any
$\ii\in I^n$ (such that the support of $\brho_{\bell}$ is
included in $\tilde O_\ii$ and $O_\bell=O_{i_0}$) and any
$1\leq e\leq E$, there exists an admissible chart
$\bphi'=\bphi'_{\bell,\ii,e} \in \FF(\bell)$ such that
  \begin{equation}
  \label{eqmainstep}
  \sum_{\bm\in \ZZ^e} \norm{(\brho_\bm( 1_{O_\bell}\brho_\bell \cdot g^{(n)} \omega)\circ T_\ii^{-n})
  \circ \bphi_\bm}
  {H_p^{t,s}}^p
  \leqc \Cs \chi_n \norm{(1_{O_\bell} \brho_\bell \cdot\omega) \circ \bphi'_{\bell,\ii,e}}{H_p^{t,s}}^p\, ,
  \end{equation}
where
  \begin{equation}
  \chi_n=\norm{ |\det DT^n|
  \max(\lambda_{u,n}^{-t}, \lambda_{s,n}^{-(s+t)})^p
  |g^{(n)}|^p}{L^\infty}.
  \end{equation}
As always, the sum on the left hand side of \eqref{eqmainstep}
is restricted to those values of $\bm$ such that the support of
$\brho_\bm$ is included in $T_\ii \tilde O_\ii$

Let us fix $\bell$, $\ii$ and $e$ as above, until the end of
the proof of \eqref{eqmainstep}. All the objects we shall now
introduce shall depend on these choices, although we shall not
make this dependence explicit to simplify the notations. Let
$i,j$ be such that $\ZZ^e\subset \ZZ_{i,j}(R)$, and let
$\JJ=\{m\st (i,j,m)\in \ZZ^e$\}. Since the points in $\JJ$ are
distant of at least $C(C_0)$, Lemma~\ref{lemcompose} will
apply.

Increasing $R$, we can ensure that the map
  \begin{equation*}
  \TT\coloneqq \kappa_{i,j}^R\circ T^n_\ii \circ
  (\kappa_\bell^R)^{-1}
  \end{equation*}
is arbitrarily close to its differential $M=D\TT(\ell)$ at
$\ell\coloneqq \kappa_{\bm'}(q_{\bm'})$, i.e., the map
$(\TT^{-1} [\cdot + \TT(\ell)]-\ell) \circ M$ is close to the
identity in $C^{1+\alpha}$, say on the ball $B(0,2d)$.
Moreover, recalling the notation from the beginning of Section
~ \ref{invv}, the matrix $M$ sends $\CC_\bell$ to $\CC_{i,j}$
compactly, and
  \begin{equation}
  \Cs \geq \lambda_u(M,\CC_\bell,\CC_{i,j})/ \lambda^{(n)}_u( q_\bell) \geq \Cs^{-1} \, ,
  \end{equation}
with similar inequalities for $\lambda_s$ and $\Lambda_u$.
Since $T$ is uniformly hyperbolic and satisfies the bunching
conditions \eqref{bunch} and \eqref{bunch2}, we can ensure by
taking $n$ large enough that $M$ satisfies   \eqref{suffhyp}
for the constant $\epsilon=\epsilon(C_0,C_1)$ constructed in
Lemma~\ref{lemcompose}.
By Lemma~\ref{lemextend}, since the map $(\TT^{-1} [\cdot +
\TT(\ell)]-\ell) \circ M$ is close to the identity on
$B(0,2d)$, there exists a diffeomorphism of $\R^d$, close to
the identity and coinciding with this map on $B(0,d)$.
Composing with $M^{-1}$ and translating, we obtain an extension
of $\TT^{-1}$, coinciding with $\TT^{-1}$ on $B(\TT(\ell),d)$,
and still denoted by $\TT^{-1}$. Taking  $R$ large enough, we
can ensure that $\norm{ (\TT^{-1} [\cdot + \TT(\ell)]-\ell)
\circ M -\id }{C^{1+\alpha}} \leq \epsilon(C_0,C_1)$.

We may therefore apply Lemma \ref{lemcompose} (see also Remark~
\ref{shorter}), and we obtain a block diagonal matrix $D$, a
chart $\phi'$ around $\ell$, and diffeomorphisms $\Psi_m$,
$\Psi$ such that, for any $m$ in the set $\JJ'$ of those
elements in $\JJ$ for which $\brho_\bm\cdot\brho_\bell\circ
T_\ii^{-n}$ is nonzero,
  \begin{equation}
  \label{newchart}
  \TT^{-1}\circ \phi_\bm=\phi' \circ \Psi \circ D^{-1}\circ \Psi_m
  \end{equation}
on the set where $(\brho_\bm\cdot\brho_\bell\circ T_\ii^n)
\circ \Phi_\bm$ is nonzero.

Writing $\omega'= (1_{O_\bell}\brho_\bell \cdot g^{(n)}
\omega)\circ (\kappa_{\bell}^R)^{-1}$, we have (recall that
$(i,j)$ is fixed so that $\ZZ^e\subset \ZZ_{i,j}(R)$)
  \begin{align*}
  \sum_{\bm\in \ZZ^e} \bigl\|(\brho_\bm( 1_{O_\bell}\brho_\bell \cdot g^{(n)} &\omega)\circ T_\ii^{-n})
  \circ \bphi_\bm\bigr\|_{H_p^{t,s}}^p\\ &
  =
  \sum_{m\in \JJ'} \norm{\rho_m\circ \phi_{i,j,m} \cdot \omega'\circ \TT^{-1} \circ \phi_{i,j,m}}
  {H_p^{t,s}}^p
  \\&
  =
  \sum_{m\in \JJ'} \norm{ (\rho_m\circ \phi_{i,j,m}\circ \Psi_m^{-1} \cdot
  \omega'\circ \phi' \circ \Psi \circ D^{-1})\circ \Psi_m}{H_p^{t,s}}^p\, .
  \end{align*}
Using the notations and results of Lemma ~\ref{lemcompose}, the
terms in this last equation are of the form $v\circ \Psi_m$,
where $v$ is a distribution supported in
$\Psi_m(\phi_{i,j,m}^{-1}(B(m,d)))\subset B(\Pi m,
C_0^{1/2}/2)$. Since the range of $\Psi_m$ contains $B(\Pi m,
C_0^{1/2})$, and since $\alpha t + |s|< \alpha$,
Lemma ~ \ref{lemcomposeD1alpha} gives
$\norm{v\circ \Psi_m}{H_p^{t,s}}\leq \Cs \norm{v}{H_p^{t,s}}$,
yielding a bound
  \begin{equation*}
  \Cs \sum_{m\in \JJ'} \norm{ \rho_m\circ \phi_{i,j,m}\circ \Psi_m^{-1} \cdot
  \omega'\circ \phi' \circ \Psi \circ D^{-1}}{H_p^{t,s}}^p\, .
  \end{equation*}
The functions $\rho_m \circ \phi_{i,j,m}\circ \Psi_m^{-1}$ have
a bounded $C^1$ norm, and are supported in the balls $B(\Pi
m,C_0^{1/2}/2)$, whose centers are distant by at least $C_0$,
by Lemma~\ref{lemcompose}~(a). Therefore, by
Lemma~\ref{lem:localization}, the last expression is bounded by
  \begin{equation*}
  \Cs \norm{\omega'\circ \phi' \circ \Psi \circ D^{-1}}{H_p^{t,s}}^p\, .
  \end{equation*}
We may apply Lemma~\ref{CompositionDure} to the composition
with $D^{-1}$ (to obtain an improvement in the $H_p^{t,s}$
norm, up to compact terms). Since $\omega'$ is supported in
$B(\ell, C_0^{1/2}/2)$ while the range of $\Psi$ contains
$B(\ell, C_0^{1/2})$ (by Lemma~\ref{lemcompose}),
Lemma~\ref{lemcomposeD1alpha} implies that the composition with
$\Psi$  is bounded. Summing up, we obtain
  \begin{multline}
  \label{qklsjflmqsfd}
  \sum_{\bm\in \ZZ^e} \norm{(\brho_\bm( 1_{O_\bell} \brho_\bell \cdot g^{(n)} \omega)\circ T_\ii^{-n})
  \circ \bphi_\bm}
  {H_p^{t,s}}^p
  \\
  \leqc \Cs \chi^{(0)}_n(q_\bell)\norm{
	(1_{O_\bell} \brho_\bell \cdot g^{(n)} \omega)
	\circ (\kappa_\bell^R)^{-1}\circ \phi'
  }{H_p^{t,s}}^p \, ,
  \end{multline}
where
  \begin{equation*}
  \chi^{(0)}_n(q_\bell)=(|\det DT^n|
  \max(\lambda_{u,n}^{-t}, \lambda_{s,n}^{-(s+t)})^p)(q_\bell) \, .
  \end{equation*}
Let $\nu>0$. Since  $(\kappa_\bell^R)^{-1}$ contracts by a
factor $1/R$, we can ensure by increasing $R$ that the
$C^{\regg}$ norm of $g^{(n)}\circ (\kappa_\bell^R)^{-1}$ on
$B(\ell,d)$ is bounded by $\Cs |g^{(n)}(q_\bell)|+\nu$ (recall
that, by assumption, $g$ belongs to $C^\regg$ for some
$\regg>t+|s|$). The term $\nu$ here is necessary when $|g|$ is
not bounded away from $0$. Choosing $\nu$ small enough, we can
ensure that $(|g^{(n)}(q_\bell)|+\nu)^p \chi^{(0)}_n(q_\bell)
\leq 2 \chi_n$. Hence, \eqref{qklsjflmqsfd} and
Lemma~\ref{Leib} yield
  \begin{multline*}
  \sum_{\bm\in \ZZ^e} \norm{
	(\brho_\bm( 1_{O_\bell}\brho_\bell \cdot g^{(n)} \omega)
	\circ T_\ii^{-n}) \circ \bphi_\bm
  }{H_p^{t,s}}^p
  \\
  \leqc
  \Cs \chi_n\norm{
	(1_{O_\bell}\brho_\bell \omega)\circ (\kappa_\bell^R)^{-1}
	\circ \phi'
  }{H_p^{t,s}}^p \, .
  \end{multline*}

This concludes the proof of \eqref{eqmainstep}. Summing over
all possible values of $\bell$, $\ii$ and, $e$, we obtain
  \begin{equation}\label{concl}
  \norm{\LL_g^n \omega}{\Phi}^p
  \leqc \Cs (\Cs N^p)^{n/N} (D_n^e)^{p-1}\chi_n\sum_{\bell, \ii} \sum_{e=1}^E
  \norm{(1_{O_\bell} \brho_\bell \omega)\circ \bphi'_{\bell,\ii,e}}{H_p^{t,s}}^p \, .
  \end{equation}

\emph{Fourth step: Conclusion.} The right hand side of
(\ref{concl}) is essentially of the form
$\norm{\omega}{\Phi'}^p$ for some family of admissible charts
$\Phi'$, with the difference that to a point $q_\bell$ for
$\bell\in \ZZ(R)$ correspond several admissible charts around
it. Since $E$ is independent of $n$, the number of those charts
around $q_\bell$ is at most $\Cs \cdot \Card\{\ii \st \tilde
O_\ii \cap A(\bell)\not=\emptyset\}$. If $R$ is large enough,
we can ensure that this quantity is bounded by the intersection
multiplicity of the sets $\tilde O_\ii$, which is at most
$D_n^b$ by construction. Therefore, we obtain
  \begin{equation*}
  \norm{\LL_g^n \omega}{\Phi}^p
  \leqc \Cs (\Cs N^p)^{n/N} (D_n^e)^{p-1} D_n^b \chi_n \norm{\omega}{\HHH^{t,s}_p(R, C_0, C_1)}^p \, .
  \qedhere
  \end{equation*}
\end{proof}

\begin{proof}[Proof of Proposition~\ref{propinvar}]
Remark~\ref{remOKgrand} shows that the charts $\phi'$ we
constructed in the third step of the proof of Lemma~
\ref{maintechlemma} can be defined on larger balls, and with
better bounds. In particular, these new charts will be
admissible when looked at a scale $R'$ and with a smoothness
constant $C'_1$, for any $R/2 \leq R'\leq 2R$ and $C_1/2 \leq
C'_1 \leq 2C_1$. The proof of Lemma~ \ref{maintechlemma}
therefore gives the following statement:

For any large enough $C_1$ (say $C_1\geq C_1^{(0)}$), for any
large enough $n$ (say $n\geq n^{(0)}(C_1)$), and for any large
enough $R$ (say $R\geq R^{(0)}(n,C_1)$), then for any $R'\in
[R/2, 2R]$ and $C'_1 \in [C_1/2, 2C_1]$, the operator $\LL_g^n$
maps continuously $\HHH_p^{t,s}(R, C_0, C_1)$ to
$\HHH_p^{t,s}(R', C_0, C'_1)$.

It follows that, for any $C_1 \geq C^{(0)}_1$ and $R\geq
R^{(0)}(n^{(0)}(C_1), C_1)$, and for any $C'_1\geq C^{(0)}_1$
and $R'\geq R^{(0)}(n^{(0)}(C'_1), C'_1)$, there exists an
integer $n$ such that $\LL_g^n$ maps $\HHH_p^{t,s}(R, C_0,C_1)$
to $\HHH_p^{t,s}(R',C_0,C'_1)$. Moreover, if $n'$ is large
enough, $\LL_g^{n'}$ maps $\HHH_p^{t,s}(R, C_0,C_1)$ to itself.
Writing a large enough integer $N$ as $n'+n$, we get that
$\LL_g^N$ maps $\HHH_p^{t,s}(R, C_0,C_1)$ to
$\HHH_p^{t,s}(R',C_0,C'_1)$.
\end{proof}


\appendix

\section{Calculus for some classes of maps}
\label{apA}

This appendix groups some straightforward results about classes of maps
$\DD$ and $\KK$ which appear in the proofs of Lemma ~\ref{lempropphi}
and  Lemmas~\ref{lemcompose}--\ref{lemcomposeQ}
(together with an easy result, which is useful
for the proof of Lemma ~ \ref{maintechlemma}).

\label{app}

\subsection{The class \texorpdfstring{$\DD$}{D}}

\label{subsectDD}

For $\Cs>0$, let us denote by $\DD(\Cs)$ the class of $C^1$
maps $f$ defined on an open subset of $\R^d$, satisfying
  \begin{equation}
  \label{defD}
  \Cs^{-1} |z-z'|\leq |f(z)-f(z') | \leq \Cs |z-z'|,
  \end{equation}
for any $z,z'$ in the domain of definition of $f$. It follows
that $f$ is a local diffeomorphism, and that $\nor{Df}\leq
\Cs$, $\nor{(Df)^{-1}}\leq \Cs$.

\begin{lemma}
\label{belongsDD}
Assume that $f(x,y)=(g(x,y),y)$ is defined on a set $A_1\times
A_2$ where $A_1$ and $A_2$ are convex, that $|Dg|\leq C$, and
that $|g(x,y)-g(x',y)|\geq C^{-1} |x-x'|$ for some $C>0$. Then
$f\in \DD(\Cs)$, for some constant $\Cs$ depending only on $C$.
\end{lemma}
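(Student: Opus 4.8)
The plan is to check directly the bilipschitz bounds \eqref{defD} defining $\DD(\Cs)$, producing a constant $\Cs$ which is an explicit function of $C$ (all our constants being allowed to depend also on the ambient dimension $d$). The remaining assertions in the definition of $\DD(\Cs)$---that $f$ is then a local diffeomorphism with $\nor{Df}\le\Cs$ and $\nor{(Df)^{-1}}\le\Cs$---are exactly the general facts recorded immediately after that definition, so nothing further is needed once \eqref{defD} holds. Note also that $f(x,y)=(g(x,y),y)$ is $C^1$, since $g$ is differentiable with $|Dg|\le C$.

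For the upper bound, given $z=(x,y)$ and $z'=(x',y')$ in $A_1\times A_2$, the segment from $z$ to $z'$ lies in $A_1\times A_2$ by convexity of $A_1$ and of $A_2$, so the mean value inequality gives $|g(x,y)-g(x',y')|\le C|z-z'|$; combined with $|y-y'|\le|z-z'|$ this yields $|f(z)-f(z')|\le (C+1)|z-z'|$. For the lower bound---the only point requiring any thought---I would write
  \begin{equation*}
  g(x,y)-g(x',y') = \bigl(g(x,y)-g(x',y)\bigr) + \bigl(g(x',y)-g(x',y')\bigr)
  \end{equation*}
and use the hypothesis $|g(x,y)-g(x',y)|\ge C^{-1}|x-x'|$ on the first bracket, together with the mean value inequality $|g(x',y)-g(x',y')|\le C|y-y'|$ on the second (legitimate since $A_2$ is convex), to get $|g(x,y)-g(x',y')|\ge C^{-1}|x-x'|-C|y-y'|$. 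Then I would distinguish two regimes: if $C^{-1}|x-x'|\ge 2C|y-y'|$, the last inequality gives $|g(x,y)-g(x',y')|\ge \tfrac12 C^{-1}|x-x'|$, and since in this regime $|y-y'|$ is dominated by $|x-x'|$ one obtains $|f(z)-f(z')|\ge|g(x,y)-g(x',y')|\ge\Cs^{-1}|z-z'|$; in the complementary regime one simply uses $|f(z)-f(z')|\ge|y-y'|$, and $|y-y'|$ now dominates $|z-z'|$ up to a constant depending only on $C$. Taking $\Cs$ to be the maximum of the finitely many constants so produced gives $f\in\DD(\Cs)$.

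I do not expect any genuine obstacle here: the argument is elementary, and the only mild care needed is the bookkeeping of constants in the two-regime split---in particular choosing the threshold separating the two regimes so that the dominant of the two coordinates controls $|z-z'|$ by a factor depending only on $C$.
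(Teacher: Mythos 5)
Your proof is correct and follows essentially the same route as the paper's: the upper bound from $|Dg|\le C$ and convexity, and for the lower bound a two-regime split according to whether $|y-y'|$ or $|x-x'|$ dominates, using the preserved $y$-coordinate in one case and the reverse-triangle/decomposition estimate $|g(x,y)-g(x',y')|\ge C^{-1}|x-x'|-C|y-y'|$ in the other. The only cosmetic difference is that you perform the decomposition at the level of $g$ while the paper phrases it as a reverse triangle inequality for $f$; the content is identical.
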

\begin{proof}
Since the second coordinate of $f(x,y)$ is equal to $y$, while
the derivative of $f$ is bounded by $C$, we have
  \begin{equation}
  \label{mqlsdkjf}
  |y-y'|\leq |f(x,y)-f(x',y')|\leq \Cs^1(|x-x'|+|y-y'|) \, ,
  \end{equation}
for some constant $\Cs^1$ depending only on $C$. This proves
the (trivial) upper bound in \eqref{defD}.

Consider now two points $z=(x,y),z'=(x',y')\in A_1 \times A_2$.
If $|y-y'|\geq C^{-1} |x-x'|/(2\Cs^1)$, we have in particular
$|y-y'|\geq \epsilons^2 |z-z'|$ for some $\epsilons^2$, and we
get from \eqref{mqlsdkjf} that $|f(z)-f(z')|\geq \epsilons^2
|z-z'|$. Otherwise,
  \begin{align*}
  |f(x,y)-f(x',y')|&\geq |f(x,y)-f(x',y)|
  -|f(x',y)-f(x',y')|
  \\&
  \geq C^{-1} |x-x'| - \Cs^1 |y-y'|
  \geq C^{-1} |x-x'|/2 \, .
  \end{align*}
This proves the lower bound in \eqref{defD} in all cases.
\end{proof}

\begin{lemma}
\label{DDbigimage}
Let $f\in \DD(\Cs)$, and assume that the domain of definition
of $f$ contains a ball $B(z,r)$. Then the range of $f$ contains
$B(f(z), r/\Cs)$.
\end{lemma}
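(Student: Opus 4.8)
The plan is to combine the two‑sided Lipschitz bound \eqref{defD} with the fact that $f$ is a local diffeomorphism (as recorded just after \eqref{defD}), hence an open map, via a standard connectedness argument. After a translation we may assume $z=0$ and $f(0)=0$, so the domain of $f$ contains $B(0,r)$ and the goal becomes: the range of $f$ contains $B(0,r/\Cs)$.

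First I would record the elementary consequences of $f\in\DD(\Cs)$: the lower bound in \eqref{defD} shows $f$ is injective on its domain, and since $f$ is $C^1$ with $Df$ everywhere invertible (and $\nor{(Df)^{-1}}\le\Cs$), the map $f$ is open. Then I set $S\coloneqq f(B(0,r))\cap B(0,r/\Cs)$ and show it is nonempty, relatively open and relatively closed in the connected set $B(0,r/\Cs)$. Nonemptiness is clear since $0\in S$. For relative closedness: if $f(a_n)=w_n\to w\in B(0,r/\Cs)$, then $|a_n|\le\Cs|w_n|$ is eventually bounded by a radius $<r$ (and $\le r$ in the limit), so the $a_n$ stay in a compact subset of the closed set $B(0,r)$ contained in the domain; extracting a convergent subsequence $a_{n_k}\to a$ and using continuity gives $f(a)=w$ with $a\in B(0,r)$, so $w\in S$. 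For relative openness at an interior point $w$ of $B(0,r/\Cs)$: if $f(a)=w$ with $a\in B(0,r)$, then $|a|\le\Cs|w|<r$, so $a$ lies in the interior of $B(0,r)$, and openness of $f$ produces a neighbourhood of $w$ inside $f(B(0,r))$, hence inside $S$. By connectedness $S$ contains the interior of $B(0,r/\Cs)$, and then relative closedness upgrades this to all of $B(0,r/\Cs)$, which is the claim. (Alternatively one could invoke a quantitative inverse function theorem, but the bi‑Lipschitz‑plus‑open structure makes this connectedness argument immediate and cheaper.)

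The only genuinely delicate point is the bookkeeping with open versus closed balls — in particular, checking that preimages of points strictly inside the target ball sit in the interior of $B(0,r)$ so that openness of $f$ applies, and recovering the boundary sphere of $B(0,r/\Cs)$ from the limiting (closedness) argument. Since the paper's $B(\cdot,\cdot)$ is a product of two Euclidean balls rather than a single one, this also introduces a harmless dimensional factor, which is absorbed into $\Cs$ (consistently with the running convention for the symbol $\Cs$). No further input beyond the properties of $\DD(\Cs)$ already stated is needed.
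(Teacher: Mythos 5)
Your proof is correct and is essentially the paper's own argument: both show that $f(B(z,r))\cap B(f(z),r/\Cs)$ is nonempty, relatively open (since $f$ is a local diffeomorphism) and relatively closed (via compactness and the lower bound in \eqref{defD}) in the connected target ball. The only difference is bookkeeping: the paper works with radii $r'<r$ and lets $r'\to r$ to handle the boundary, while you recover the boundary points through the closedness/limit step and absorb the product-ball versus Euclidean-ball discrepancy into $\Cs$, which is consistent with the paper's conventions.
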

\begin{proof}
Let $r'<r$, and consider $A= f (B(z,r')) \cap B(f(z), r'/\Cs)$.
Since $f$ is a local diffeomorphism, this is an open subset of
$B(f(z), r'/\Cs)$. Moreover, if $|z'-z|=r'$, then $f(z')$ does
not belong to $B(f(z), r'/\Cs)$, since $|f(z')-f(z)|\geq
|z'-z|/\Cs=r'/\Cs$. Therefore, $A$ is also equal to
$f(\overline{B(z',r)}) \cap B(f(z), r'/\Cs)$. This is a closed
subset of $B(f(z), r'/\Cs)$, since $f(\overline{B(z',r)})$ is
compact.

Finally, $A$ is open and closed in $B(f(z), r'/\Cs)$. By
connectedness, it coincides with this whole ball. In
particular, the range of $f$ contains $B(f(z), r'/\Cs)$.
Letting $r'$ tend to $r$, we conclude the proof.
\end{proof}

Let us also mention the following easy result, which is useful
for the proof of Lemma ~ \ref{maintechlemma}.
\begin{lemma}
\label{lemextend}
Let $\alpha \in (0,1]$ and let $f:B(0,1)\to \R^d$ be a
diffeomorphism such that $\norm{f-\id}{C^{1+\alpha}}$ is small
enough. Then there exists a diffeomorphism $\tilde f$ of
$\R^d$, coinciding with $f$ on $B(0,1/2)$, and such that
$\norm{\tilde f-\id}{C^{1+\alpha}}\leq \Cs
\norm{f-\id}{C^{1+\alpha}}$, for some universal constant $\Cs$
depending only on the dimension $d$.
\end{lemma}
\begin{proof}
Let us write, for $z\in B(0,1)$, $f(z)=z+\psi(z)$ with
$\norm{\psi}{C^{1+\alpha}}$ small. We may define the required
extension $\tilde f$ of $f$ by $\tilde f(z)=z+\gamma(z)\psi(z)$
where $\gamma$ is $C^\infty$, equal to $1$ on $B(0,1/2)$ and
supported in $B(0,1)$. If $\norm{\psi}{C^{1+\alpha}}$ is small
enough, then $\langle D\tilde f(z) v,v\rangle \geq |v|^2/2$ for
any point $z$ and any vector $v$. Integrating this inequality,
it follows that $|\tilde f(z)-\tilde f(z')|\geq |z-z'|/2$.
Therefore, $\tilde f$ belongs to the class $\DD(2)$. By
Lemma~\ref{DDbigimage}, it is surjective, hence it is a
diffeomorphism of $\R^d$.
\end{proof}

\subsection{The class \texorpdfstring{$\KK$}{K}}

\label{subsecK}

Let us fix $\alpha\in (0,1]$ and $\beta\in (0,\alpha)$. We
denote by $\KK=\KK^{\alpha,\beta}$ the class of matrix-valued
functions $K$ on $\R^d$ such that, for some constant $C$ and
for all $x,x'\in \R^{d_u}$ and all $y,y'\in \R^{d_s}$,
  \begin{align}
  \label{KK_sup}&|K(x,y)|\leq C\, ,\\
  \label{KK_x}&|K(x,y)-K(x',y)|\leq C |x-x'|^\beta \, ,\\
  \label{KK_y}&|K(x,y)-K(x,y')|\leq C |y-y'|^\alpha\, ,\\
  \label{KK_xy}&|K(x,y)-K(x',y)-K(x,y')+K(x',y')| \leq C |x-x'|^\beta |y-y'|^{\alpha-\beta}\, .
  \end{align}
  If $K \in \KK$, we write $\nor{K}$ for the
the smallest $C$ satisfying the inequalities above. We write
$\KK(C)$ for the functions in $\KK$ with $\nor{K}\leq C$.

For instance, any bounded $\alpha$-H\"{o}lder continuous function
$K$ belongs to $\KK$ (to obtain \eqref{KK_xy}, treat separately
the cases $|x-x'|\leq |y-y'|$ and $|x-x'|>|y-y'|$). Note also
that if K is $C^1$ then the left-hand-side of \eqref{KK_xy} can
be rewritten as $|\int_y^{y'} \partial_{y'-y}
K(x,t)-\partial_{y'-y} K(x',t)\, dt|$, i.e., it is a
finite-difference-type expression for $\partial_x \partial_y
K$.

\begin{proposition}
\label{KKinvmult}
A function in $\KK$ satisfies
  \begin{equation}
  \label{KK_holder}
  |K(x,y)-K(x',y')|\leq 3\nor{K} ( |x-x'|+ |y-y'|)^\beta.
  \end{equation}

If $K,K'\in \KK$, then $K+K'\in \KK$, with $\nor{K+K'}\leq
\nor{K}+\nor{K'}$. Moreover, $KK' \in \KK$, with $\nor{KK'}\leq
6 \nor{K}\nor{K'}$. Finally, if $K$ is everywhere invertible
and $|K^{-1}| \leq h$ for some finite number $h$, then
$K^{-1}\in \KK$ and $\nor{K^{-1}} \leq 5\max(1,h^3)
\max(1,\nor{K}^3)$.
\end{proposition}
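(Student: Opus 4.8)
The plan is to establish the four assertions in sequence, each by a direct estimate, using the structure of the defining inequalities \eqref{KK_sup}--\eqref{KK_xy}.

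First, for \eqref{KK_holder}: given $(x,y)$ and $(x',y')$, I would write $K(x,y)-K(x',y')=[K(x,y)-K(x',y)]+[K(x',y)-K(x',y')]$ and bound the two pieces by $\nor{K}|x-x'|^\beta$ (from \eqref{KK_x}) and $\nor{K}|y-y'|^\alpha$ (from \eqref{KK_y}). Since $\beta\le\alpha$ and $|y-y'|\le |x-x'|+|y-y'|$, each term is at most $\nor{K}(|x-x'|+|y-y'|)^\beta$, giving the constant $3$ with room to spare (in fact $2$ suffices, but $3$ is harmless). Additivity $\nor{K+K'}\le\nor{K}+\nor{K'}$ is immediate since each of the four inequalities is preserved under sums with the constants adding.

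For the product $KK'$: verifying \eqref{KK_sup}, \eqref{KK_x}, \eqref{KK_y} is routine — e.g.\ $K(x,y)K'(x,y)-K(x',y)K'(x',y)=[K(x,y)-K(x',y)]K'(x,y)+K(x',y)[K'(x,y)-K'(x',y)]$, bounded by $\nor{K}\nor{K'}|x-x'|^\beta+\nor{K}\nor{K'}|x-x'|^\beta=2\nor{K}\nor{K'}|x-x'|^\beta$, and similarly in $y$. The only slightly delicate point is the mixed second-difference \eqref{KK_xy}. I would introduce the notation $\Delta_x H = H(x,y)-H(x',y)$ and $\Delta_y H = H(x,y)-H(x,y')$ (for the fixed four points), so that the quantity to estimate is the iterated difference $\Delta_x\Delta_y(KK')$, and expand it via the discrete Leibniz rule: $\Delta_x\Delta_y(KK')$ is a sum of products in which one factor is a second difference $\Delta_x\Delta_y$ of $K$ or $K'$ and the others are single or no differences, plus a cross term involving $\Delta_xK\cdot\Delta_yK'$ and $\Delta_yK\cdot\Delta_xK'$. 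Each resulting term is then bounded by $\nor{K}\nor{K'}|x-x'|^\beta|y-y'|^{\alpha-\beta}$ using \eqref{KK_sup}--\eqref{KK_xy}, where for the cross terms one notes $|x-x'|^\beta|y-y'|^\alpha\le |x-x'|^\beta|y-y'|^{\alpha-\beta}\cdot(\text{bounded})$ only if $|y-y'|$ is bounded — so here I would instead observe that the cross term $\Delta_yK\cdot\Delta_xK'$ is bounded by $\nor{K}|y-y'|^\alpha\cdot\nor{K'}|x-x'|^\beta$, and since this estimate is not of the required shape when $|y-y'|$ is large, I must also use the alternative bound $|\Delta_yK|\le 2\nor{K}$, $|\Delta_xK'|\le 2\nor{K'}$ and interpolate: more simply, one splits into the cases $|y-y'|\le 1$ and $|y-y'|>1$, using \eqref{KK_y} in the first and \eqref{KK_sup} in the second (and symmetrically), exactly as in the parenthetical remark after the statement about bounded $\alpha$-H\"older functions. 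Counting the terms carefully yields the constant $6$.

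For the inverse: with $|K^{-1}|\le h$ and $|K|\le\nor{K}$ already in hand, \eqref{KK_sup} for $K^{-1}$ is just $h$. For the single differences use the identity $K^{-1}(x,y)-K^{-1}(x',y)=-K^{-1}(x,y)[K(x,y)-K(x',y)]K^{-1}(x',y)$, giving a bound $h^2\nor{K}|x-x'|^\beta$, and similarly in $y$ with exponent $\alpha$. For the second difference \eqref{KK_xy}, apply $\Delta_x\Delta_y$ to this identity: writing $L=K^{-1}$, one has $\Delta_x L = -L\,\Delta_x K\,\tilde L$ where $\tilde L$ denotes $L$ evaluated at the shifted point, and then $\Delta_y(\Delta_x L)$ expands by the Leibniz rule into a sum of products each of which contains at most one second difference ($\Delta_x\Delta_y K$, bounded by \eqref{KK_xy}) or a product of two single differences of $K$ (bounded using the split-into-cases trick as above), the remaining factors being copies of $L$ or $\tilde L$ (bounded by $h$) and single differences of $L$ (bounded by $h^2\nor{K}$ times the appropriate Hölder factor). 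Collecting, every term is at most a constant times $\max(1,h^3)\max(1,\nor{K}^3)|x-x'|^\beta|y-y'|^{\alpha-\beta}$, and bookkeeping the number of terms gives the stated constant $5$.

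The main obstacle is purely combinatorial: organizing the discrete Leibniz expansion of the mixed second difference of a product (and of the inverse), and in particular handling the cross terms $\Delta_xK\cdot\Delta_yK'$, whose naive bound $|x-x'|^\beta|y-y'|^\alpha$ is not of the required form $|x-x'|^\beta|y-y'|^{\alpha-\beta}$ when $|y-y'|$ is unbounded; the fix is the standard case split on $|y-y'|\lessgtr 1$ (equivalently, using simultaneously the Hölder bound and the sup bound), which is the same device already noted in the text for $\alpha$-Hölder functions. Everything else is a careful but routine tracking of constants.
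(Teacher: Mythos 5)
Your route is essentially the paper's. The mixed second difference of $KK'$ is handled there by exactly the discrete Leibniz expansion you describe, written out as the four-term identity
\[
aa'-bb'-cc'+dd'=c(a'-b'-c'+d')+(a-b-c+d)\,d'+(a-c)(a'-b')+(a-b)(b'-d'),
\]
with the cross terms $(a-c)(a'-b')$ and $(a-b)(b'-d')$ controlled by the very device you invoke: combining \eqref{KK_y} with \eqref{KK_sup} through the case split $|y-y'|\le 1$ versus $|y-y'|>1$, which the paper records once and for all as the auxiliary bound $|K(x,y)-K(x,y')|\le 2\nor{K}\,|y-y'|^{\alpha-\beta}$ (and the same with exponent $\beta$). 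The inverse is likewise treated via the resolvent identity for single differences plus an explicit three-term identity for the second difference, which is what your Leibniz expansion of $\Delta_y\bigl(-L\,\Delta_x K\,\tilde L\bigr)$ amounts to. The precise constants are never used later, so your loose bookkeeping of $6$ and $5\max(1,h^3)\max(1,\nor{K}^3)$ is harmless.

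One step is wrongly justified as written: for \eqref{KK_holder} you bound $|K(x',y)-K(x',y')|$ by $\nor{K}|y-y'|^\alpha$ and then assert this is at most $\nor{K}(|x-x'|+|y-y'|)^\beta$ ``since $\beta\le\alpha$''. That inference rests on $u^\alpha\le u^\beta$, which holds only for $u\le 1$; the claimed comparison of the two bounds is false when $|y-y'|>1$. The estimate itself survives, but only because for $|y-y'|>1$ one must switch to the sup bound \eqref{KK_sup}, giving $|K(x',y)-K(x',y')|\le 2\nor{K}\le 2\nor{K}|y-y'|^\beta$ --- precisely the case split you use for the cross terms, and the source of the factor $2$ in \eqref{KK_y2} and of the constant $3$ in the statement. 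With that repair your argument for \eqref{KK_holder}, and hence the whole proposal, matches the paper's proof.
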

\begin{proof}
Notice first that we have
  \begin{equation}
  \label{KK_y2}
  |K(x,y)-K(x,y')|\leq 2\nor{K} |y-y'|^{\alpha-\beta}\, .
  \end{equation}
Indeed, this follows from \eqref{KK_y} if $|y-y'|\leq 1$, and
from \eqref{KK_sup} if $|y-y'|>1$.  This inequality also holds
if $|y-y'|^{\alpha-\beta}$ is replaced with $|y-y'|^\beta$
(with the same proof). Therefore, by \eqref{KK_x},
  \begin{multline*}
  |K(x,y)-K(x',y')|
  \leq |K(x,y)-K(x',y)|+|K(x',y)-K(x',y')|
  \\
  \leq \nor{K} |x-x'|^\beta + 2 \nor{K} |y-y'|^\beta
  \leq 3 \nor{K} \max (|x-x'|, |y-y'|)^\beta \, .
  \end{multline*}
\eqref{KK_holder} follows.

Consider now $K,K'\in \KK$. It is trivial that $\nor{K+K'}\leq
\nor{K}+\nor{K'}$. We turn to $KK'$. Let us write $a,b,c,d$ for
$K(x,y), K(x',y), K(x,y'), K(x',y')$. Similarly, we use
$a',b',c',d'$ for $K'$. The inequality \eqref{KK_sup} for $KK'$
is trivial, \eqref{KK_x} follows from the equality
$aa'-bb'=a(a'-b')+(a-b)b'$, and \eqref{KK_y} is similar. For
\eqref{KK_xy}, we use the identity
  \begin{equation*}
  aa'-bb'-cc'+dd'=c(a'-b'-c'+d')+(a-b-c+d) d'+(a-c)(a'-b') + (a-b)(b'-d'),
  \end{equation*}
and the bounds for $a-c$, $a'-b'$, $a-b$ and $b'-d'$ given by
\eqref{KK_x} and \eqref{KK_y2}. This concludes the proof for
$KK'$.

Finally, assume $|K^{-1}|\leq h$. Then \eqref{KK_sup} holds for
$K^{-1}$. Moreover, \eqref{KK_x} follows from the equality
$|a^{-1}-b^{-1}|=|a^{-1}(b-a)b^{-1}| \leq h^2|a-b|$.
\eqref{KK_y} is similar. For \eqref{KK_xy}, we use the identity
  \begin{multline*}
  a^{-1}-b^{-1}-c^{-1}+d^{-1}=a^{-1}(b+c-a-d) b^{-1}
  \\
  + a^{-1}(c-a) c^{-1}(d-c)b^{-1}
  + c^{-1} (d-c) b^{-1} (d-b) d^{-1}\, ,
  \end{multline*}
and the bounds \eqref{KK_x} and \eqref{KK_y2}.
\end{proof}

We recall that the subsets $\{x\}\times \R^{d_s}$ of $\R^d$ are
called ``stable leaves'' of $\R^d$ in this article.

\begin{proposition}
\label{KK_invcomp}
Let $\Psi:\R^d \to \R^d$ send stable leaves to stable leaves,
and assume that its best Lipschitz constant $L$ is finite.
Then, for $K\in \KK$, the function $K\circ \Psi$ also belongs
to $\KK$, and $\nor{K\circ \Psi} \leq 3 \max(1,L) \nor{K}$.
\end{proposition}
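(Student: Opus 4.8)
The plan is to exploit the only structural property at hand: since $\Psi$ maps each stable leaf $\{x\}\times\R^{d_s}$ into a single stable leaf, we may write $\Psi(x,y)=(u(x),v(x,y))$ for maps $u:\R^{d_u}\to\R^{d_u}$ and $v:\R^d\to\R^{d_s}$ --- the crucial point being that the first component does not depend on $y$. From the $L$-Lipschitz bound on $\Psi$ one then gets $|u(x)-u(x')|\le L|x-x'|$, $|v(x,y)-v(x',y)|\le L|x-x'|$ and $|v(x,y)-v(x,y')|\le L|y-y'|$; these, together with $\alpha\le 1$ (whence $L^\alpha\le\max(1,L)$ and likewise $L^\beta\le\max(1,L)$) and $0<\beta<\alpha$, are all I shall use about $\Psi$. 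It then remains to verify, for $K'\coloneqq K\circ\Psi$, the four inequalities \eqref{KK_sup}--\eqref{KK_xy} defining $\KK$, each with constant at most $3\max(1,L)\nor K$.

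Three of these are bookkeeping. Inequality \eqref{KK_sup} for $K'$ is \eqref{KK_sup} for $K$. For \eqref{KK_y} the point is that $u$ is independent of $y$, so $K'(x,y)-K'(x,y')=K(u(x),v(x,y))-K(u(x),v(x,y'))$ is a pure displacement in the $\R^{d_s}$-variable, and \eqref{KK_y} for $K$ with $|v(x,y)-v(x,y')|\le L|y-y'|$ gives the bound $L^\alpha\nor K|y-y'|^\alpha\le\max(1,L)\nor K|y-y'|^\alpha$. For \eqref{KK_x}, I would insert the intermediate value $K(u(x'),v(x,y))$: the difference $K(u(x),v(x,y))-K(u(x'),v(x,y))$ is controlled by \eqref{KK_x} for $K$ and $|u(x)-u(x')|\le L|x-x'|$, while $K(u(x'),v(x,y))-K(u(x'),v(x',y))$ is controlled by the variant of \eqref{KK_y2} with exponent $\beta$ and $|v(x,y)-v(x',y)|\le L|x-x'|$; the two pieces sum to at most $3\max(1,L)\nor K|x-x'|^\beta$.

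The mixed inequality \eqref{KK_xy} is the main obstacle, since a careless splitting destroys the product structure $|x-x'|^\beta|y-y'|^{\alpha-\beta}$. Here I would add and subtract $K(u(x'),v(x,y))$ and $K(u(x'),v(x,y'))$ to write $K'(x,y)-K'(x',y)-K'(x,y')+K'(x',y')=I+II$, where
\[
I=K(u(x),v(x,y))-K(u(x'),v(x,y))-K(u(x),v(x,y'))+K(u(x'),v(x,y'))
\]
and, setting $g\coloneqq K(u(x'),\cdot)$,
\[
II=\bigl[g(v(x,y))-g(v(x',y))\bigr]-\bigl[g(v(x,y'))-g(v(x',y'))\bigr].
\]
The term $I$ is a genuine mixed second difference of $K$ in its two block variables (the first moving by $u(x)-u(x')$, the second by $v(x,y)-v(x,y')$), so \eqref{KK_xy} for $K$ together with the Lipschitz bounds gives $|I|\le\nor K(L|x-x'|)^\beta(L|y-y'|)^{\alpha-\beta}=L^\alpha\nor K|x-x'|^\beta|y-y'|^{\alpha-\beta}$. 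The term $II$ involves only the $\alpha$-H\"older function $g$ on $\R^{d_s}$ evaluated at four points, and here is the delicate step: pairing its four terms so that each pair differs only in $x$, and using $|v(x,y)-v(x',y)|\le L|x-x'|$ (and its analogue with $y'$), gives $|II|\le 2L^\alpha\nor K|x-x'|^\alpha$, while the complementary pairing (each pair differing only in $y$) gives $|II|\le 2L^\alpha\nor K|y-y'|^\alpha$; using the first estimate when $|x-x'|\le|y-y'|$ and the second otherwise, together with $0<\beta<\alpha$, yields $|II|\le 2L^\alpha\nor K|x-x'|^\beta|y-y'|^{\alpha-\beta}$ in every case. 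Hence $|I|+|II|\le 3L^\alpha\nor K|x-x'|^\beta|y-y'|^{\alpha-\beta}\le 3\max(1,L)\nor K|x-x'|^\beta|y-y'|^{\alpha-\beta}$, which is \eqref{KK_xy} for $K'$. Collecting the four bounds gives $\nor{K\circ\Psi}\le 3\max(1,L)\nor K$. Apart from the interpolation step for $II$, everything is the triangle inequality and the observation that $u$ does not depend on $y$.
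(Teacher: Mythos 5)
Your proof is correct and follows essentially the same route as the paper's: your term $I$ is exactly the mixed second difference that the paper controls with \eqref{KK_xy}, and your handling of $II$ via the two pairings together with the case distinction between $|x-x'|\le|y-y'|$ and $|y-y'|\le|x-x'|$ reproduces the paper's case analysis (in the case $|y-y'|\le|x-x'|$ the paper simply bounds the whole expression by two leaf-wise differences instead of splitting off $I$, a cosmetic difference). The other three inequalities are treated as in the paper, and you obtain the same constant $3\max(1,L)\nor{K}$.
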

\begin{proof}
The inequality \eqref{KK_sup} is trivial for $K\circ \Psi$. For
\eqref{KK_x}, we write using \eqref{KK_holder}
  \begin{multline*}
  |K\circ \Psi(x,y) - K\circ \Psi(x',y)|
  \leq 3\nor{K} d (\Psi(x,y), \Psi(x',y))^\beta
  \\
  \leq 3 \nor{K} L^\beta d((x,y),(x',y))^\beta
  \leq 3 \nor{K} \max(1,L) |x-x'|^\beta.
  \end{multline*}
\eqref{KK_y} for $K\circ \Psi$ follows from \eqref{KK_y} for
$K$ and from the fact that $\Psi$ sends stable leaves to stable
leaves and is Lipschitz continuous.

We turn to \eqref{KK_xy}. We write $\Psi(x,y)=(x_1,y_1)$,
$\Psi(x,y')=(x_1, y_1')$, $\Psi(x',y)=(x_2,y_2)$ and
$\Psi(x',y')=(x_2,y_2')$.

Assume first $|y-y'|\leq |x-x'|$. Then
  \begin{align*}
  |K(x_1,y_1)-K(x_1,y_1')&-K(x_2,y_2)+K(x_2, y_2')|
  \\&
  \leq |K(x_1,y_1)-K(x_1, y_1')|+|K(x_2,y_2)-K(x_2,y_2')|
  \\&
  \leq \nor{K} |y_1-y_1'|^\alpha+ \nor{K} |y_2-y_2'|^\alpha
  \leq 2 \nor{K} L^\alpha |y-y'|^\alpha \, .
  \end{align*}
Since $L^\alpha\leq \max(1,L)$ and $|y-y'|^\alpha \leq
|x-x'|^\beta |y-y'|^{\alpha-\beta}$, this is the desired
conclusion. Assume now $|x-x'|\leq |y-y'|$. Then
  \begin{equation}
  \label{4termes}
  \begin{split}
  \raisetag{-45pt}
  |K(x_1,y_1&)-K(x_1,y_1')-K(x_2,y_2)+K(x_2, y_2')|
  \\&
  \leq |K(x_1,y_1)-K(x_1,y_1')-K(x_2,y_1)+K(x_2, y_1')|
  \\& \quad\quad
  +| K(x_2,y_2)-K(x_2,y_1)| + |K(x_2,y'_2) - K(x_2,y'_1)| \, .
  \\&
  \leq \nor{K} |x_1-x_2|^\beta |y_1-y'_1|^{\alpha-\beta}+ \nor{K} |y_2-y_1|^\alpha
  +\nor{K} |y'_2-y'_1|^\alpha.
  \end{split}
  \end{equation}
Since $\Psi$ is Lipschitz continuous, we have $|x_1-x_2|\leq L
|x-x'|$ and $|y_1-y_1'|\leq L|y-y'|$. Moreover,
  \begin{equation*}
  |y_2-y_1|\leq d( (x_1,y_1), (x_2,y_2))=d( \Psi(x,y), \Psi(x', y))
  \leq L d((x,y),(x',y))=L |x-x'| \, .
  \end{equation*}
Since $|x-x'|\leq |y-y'|$, we obtain $|y_2-y_1|^\alpha \leq
L^\alpha |x-x'|^\alpha \leq \max(1,L) |x-x'|^\beta
|y-y'|^{\alpha-\beta}$. Moreover, $|y'_2-y'_1|$ satisfies a
similar inequality. Finally, \eqref{4termes} is bounded by $3
\nor{K} \max(1,L) |x-x'|^\beta |y-y'|^{\alpha-\beta}$. This
concludes the proof.
\end{proof}

\begin{remark}
\label{domdef}
If $A_1$ and $A_2$ are convex subsets of, respectively,
$\R^{d_u}$ and $\R^{d_s}$, we can define analogously a space
$\KK(C, A_1\times A_2)$ of matrix-valued functions defined on
$A_1\times A_2$ and satisfying \eqref{KK_sup}--\eqref{KK_xy}.
The previous results also hold for this space, with the same
proofs, up to the following small modification: In Proposition
~\ref{KK_invcomp}, if $K$ is defined on $A_1 \times A_2$,  we
need to require that $\Psi$ be defined on $A'_1 \times A'_2$
with $\Psi(A'_1 \times A'_2) \subset A_1 \times A_2$.
Successive applications of the proposition
 in the proof of Lemma ~\ref{lemcomposeQ} will require stronger conditions.
The careful reader is invited to check that this does not cause
any problems in the proof of Lemma ~\ref{lemcomposeQ}.
\end{remark}

\section{Convex transversality}

\label{apB}

We prove the claims made after Definition~\ref{defconvtr}.
Consider the cone $\{(x,y)\st |x|\leq |Ay|\}$ for some nonzero
linear map $A$. We should prove that, for any vector space $E$
so that $C \cap E=\{0\}$, the set $C \cap (E+w)$ is convex for
all $w\in \real^d$.

\begin{proof}
Pick $z_1$, $z_2$ in $C \cap (E+w)$, we want to show that the
segment $[z_1,z_2]$ is included in $C \cap (E+w)$. The line
directed by $z_0 :=z_2-z_1$ is contained in $E$, so
$z_0=(x_0,y_0) \notin C$, i.e., $|Ay_0|^2 < |x_0|^2$.

Let $D=\{ (x_1 + tx_0, y_1+t y_0)\, , t\in [0,1]\}$ be the
segment between $z_1=(x_1,y_1)$ and $z_2$. The leading
coefficient of the polynomial $\Phi(t) := |x(t)|^2 - |Ay(t)|^2
= |x_1 + tx_0|^2 - |Ay_1+t Ay_0|^2$ is  $|x_0|^2 - |Ay_0|^2 >
0$. Therefore, the set $\{ t \st \Phi(t) \leq 0\}$ is convex,
i.e., $C \cap D$ is convex. Since $z_1$ and $z_2$ belong to $C
\cap D$,  we find $D \subset C \cap D$, as desired.
\end{proof}

\section{A more general setting}
\label{app_general}

For the sake of simplicity, we have formulated all our results
for the transfer operator associated to a map. However, it
turns out that the same proof applies to a wider class of
operators, which would formally correspond to the transfer
operators of multivalued maps. This kind of generalized
transfer operators has been studied in one dimension in
\cite{baru_sharp}.

In our main result, we also assumed that the continuity domains
of the stable and unstable cones coincide with the domains of definition
of the branches of the map. Although this assumption is quite
natural, it plays no role in the proof, and can therefore be
removed.

These remarks lead to the following general setting, which turns
out to be useful for many applications (see the comments after
the statement of Theorem~ \ref{main_extended}). We
consider finitely many subsets $(O_i)_{i\in I}$ of a manifold
$X$ (that may not be disjoint, and may not cover everything),
with compact closure, and maps $T_i : O_i \to X$ such that
$T_i$ admits a $C^{1+\alpha}$ extension to a neighborhood of
$\overline{O_i}$, for some $\alpha\in (0,1]$. Consider also
finitely many disjoint open subsets $(\Pi_e)_{e\in E}$,
covering almost all $X$, and assume that on each of these
subsets are given two convexly transverse cones
$\CC^{(u)}_e(q)$ and $\CC^{(s)}_e(q)$ in the tangent space
$\TT_q X$, depending continuously on $q\in \Pi_e$ and which
extend continuously up to the boundary of $\Pi_e$.

The following transversality conditions are needed. For the
domains $\Pi_e$, we require transversality with the stable
cones at time $0$: the boundary of each set $\Pi_e$ is a finite
union of hypersurfaces $P_{e,k}$ such that, for all $q\in
P_{e,k}$, the tangent space $\TT_q P_{e, k}$ is transverse to
$\CC_e^{(s)}(q)$. For the domains $O_i$, we only require
transversality at time $1$ (i.e., in the image): the boundary
of each set $O_i$ is a finite union of hypersurfaces $K_{i,k}$
such that, for all $q\in K_{i,k}$ and all $e$ such that
$T_i(q)\in \overline{\Pi_e}$, the cone $\CC_e^{(s)}(T_i (q))$
is transverse to $\TT_{T_i(q)} (T_i (K_{i,k}))$.

We will need hyperbolicity: for each  $q\in \overline{ O_i}\cap
\overline{\Pi_e} \cap T_i ^{-1}(\overline{\Pi_{e'}})$, then
$DT_i(q) \CC^{(u)}_e(q) \subset \CC^{(u)}_{e'}(T_i (q))$, and
there exists $\lambda_{i,u}(q)>1$ (independent of $e, e'$) such
that
  \begin{equation*}
  |DT_i (q) v|\ge \lambda_{i,u}(q) |v| \, ,\forall v\in \CC^{(u)}_e(q) \, .
  \end{equation*}
Moreover, for each $q\in \overline{ O_i}\cap \overline{\Pi_e}
\cap T_i ^{-1}(\overline{\Pi_{e'}})$, then $DT_i^{-1}(T_i(q))
\CC^{(s)}_{e'}(T_i(q)) \subset \CC^{(s)}_e(q)$, and there
exists $\lambda_{i,s}(q)\in (0,1)$ (independent of $e, e'$)
such that
  \begin{equation*}
  |DT_i^{-1} (T_i(q))v|\ge \lambda_{i,s}^{-1}(q) |v|\, ,
  \forall v\in \CC^{(s)}_{e'}(T_i(q)) \, .
  \end{equation*}

For $\ii \in I^n$, we define $O_\ii$ and $T_\ii$ as in
Paragraph \ref{setting}, and we also define the complexities
$D_n^b$ and $D_n^e$ at the beginning and at the end, and the
best expansion and contraction coefficients
$\lambda_{\ii,u}(q)$ and $\lambda_{\ii,s}(q)$. In this
generalized setting, we obtain the following variant of Theorem
\ref{MainTheorem}:

\begin{theorem}
\label{main_extended}
Let $T$ satisfy the piecewise hyperbolicity and transversality
conditions just given. Assume that the bunching
conditions \eqref{bunch} and \eqref{bunch2} are satisfied for
some parameters $\alpha,\beta$, and consider parameters $p,s,t$
satisfying \eqref{pstcond}. Then there exists a space $\HHH$ of
distributions on $X$ with the following properties.

Consider functions $(g_i)_{i\in I}$, defined on $O_i$ and
admitting a $C^\gamma$ extension to its closure for some
$\gamma > t+|s|$. Define an operator $(\LL_g \omega)(q)=
\sum_{T_i(q')=q} g_i(q') \omega(q')$. Then this operator acts
on $\HHH$. Moreover, its essential spectral radius on $\HHH$ is
at most the limit when $n$ tends to infinity of
  \begin{equation*}
  (D_{n}^b)^{\frac{1}{pn}} \cdot (D_{n}^e)^{\frac{1}{n}\left(1-\frac{1}{p}\right)}
  \cdot \sup_{\ii=(i_0,\dots,i_{n-1})} \norm{g_\ii^{(n)}|\det DT_\ii^{n}|^{\frac{1}{p}}
  \max(\lambda_{\ii,u}^{-t},\lambda_{\ii,s}^{-(t-|s|)})}{L^\infty(O_\ii)}^{\frac{1}{n}} \, ,
  \end{equation*}
where we set $g_\ii^{(n)}(q)= \prod_{k=0}^{n-1}
g_{i_k}(T^k_{(i_0,\dots,i_{k-1})}(q))$, for $n \ge 1$.
\end{theorem}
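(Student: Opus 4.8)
The plan is to verify that the proof of Theorem~\ref{MainTheorem} given in Sections~\ref{22}--\ref{mainsec} carries over essentially verbatim to the generalized setting of Appendix~\ref{app_general}, once one installs the correct bookkeeping for the two separate stratifications (the domains $O_i$ of the branches $T_i$, and the domains $\Pi_e$ of the cone field) and for the two separate transversality hypotheses (transversality of $\partial\Pi_e$ with the stable cones at time $0$, and of $T_i(\partial O_i)$ with the stable cones at time $1$). Concretely, I would first build the space $\HHH$ exactly as in Subsections~\ref{spaces}--\ref{reducr}: choose a finite family of $C^\infty$ charts whose supports are each contained in a single $\Pi_e$ (so that a well defined extended cone $\CC_e$, obtained by slightly enlarging the images of $\CC^{(s)}_e(q)$ and $\CC^{(u)}_e(q)$, is attached to each chart) \emph{and} in a single neighbourhood $\tilde O_i$ on which a branch $T_i$ is defined; then construct the zoomed charts $\kappa^R$, the index sets $\ZZ(R)$, the partition of unity $\brho_\bm$, the classes $\FF(\bm)$ of admissible local foliations, and finally the spaces $\HHH_p^{t,s}(R,C_0,C_1)$ and $\HHH=H(p,t,s,n,R,C_0,C_1)$ of \eqref{normeHnR}. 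That $\HHH$ contains $C^1$ (hence is nontrivial) uses only that multiplication by the characteristic functions appearing in the norm is bounded on $H_p^{t,s}$; for the boundaries $\partial\Pi_e$ this is exactly the time-$0$ transversality hypothesis combined with Lemma~\ref{lem:multiplier}, which is needed precisely because the cone field is allowed to be discontinuous across $\partial\Pi_e$ (this is the point of Remark~\ref{forbilliards}).

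Next I would establish the invariance of the class of admissible foliations, i.e.\ the analogue of Lemma~\ref{lemcompose}. Its proof never uses that the $O_i$ are disjoint or cover $X$, nor any relation between the $O_i$ and the $\Pi_e$: it is purely the Hadamard--Perron graph transform for a $C^{1+\alpha}$ map satisfying the hyperbolicity and bunching bounds~\eqref{suffhyp}. The only input from the stratification is that the matrix $M=DT_\ii^n(q)$, read in the chosen source and target charts, sends the source extended cone to the target extended cone compactly; this holds by the hyperbolicity hypothesis on $\overline{O_i}\cap\overline{\Pi_e}\cap T_i^{-1}(\overline{\Pi_{e'}})$ together with the continuity of the cones up to $\partial\Pi_e$, by exactly the argument given after Definition~\ref{extc}. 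Hence Lemma~\ref{lemcompose}, and Proposition~\ref{propinvar} which is deduced from it, remain valid.

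With these in hand I would run the Lasota--Yorke estimate of Lemma~\ref{maintechlemma} essentially unchanged. In the first step one writes $\LL_g^n\omega=\sum_{\ii\in I^n} 1_{T_\ii^n O_\ii}(g_\ii^{(n)}\omega)\circ T_\ii^{-n}$ and applies Lemma~\ref{lem:sum} twice; this is formal. The second step — getting rid of the characteristic functions — is where the time-$1$ transversality of the $O_i$ enters: the sets $P_j$ whose characteristic functions must be bounded multipliers have boundaries contained in finite unions of images, under iterates of the branches $T_i$, of the hypersurfaces $\partial O_i$, and by hypothesis these images are uniformly transverse to the stable cones, so the geometric lemma of Step~2 and then Lemma~\ref{lem:multiplier} apply verbatim (the characteristic functions coming from the chart partition near $\partial\Pi_e$ are treated the same way, using time-$0$ transversality). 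Steps three (the gluing argument via Lemmas~\ref{lemcompose}, \ref{lem:localization}, \ref{CompositionDure}, \ref{lemcomposeD1alpha}) and four (summing over $\bell,\ii,e$) are literally unchanged, except that the weights $g^{(n)}$, $|\det DT^n|$, $\lambda_{u,n}$, $\lambda_{s,n}$ are replaced throughout by the branchwise quantities $g_\ii^{(n)}$, $|\det DT_\ii^n|$, $\lambda_{\ii,u}$, $\lambda_{\ii,s}$, with $L^\infty$-norms now taken over $O_\ii$ and then supremized over $\ii\in I^n$. This yields the analogue of Theorem~\ref{MainTheorembis} with~\eqref{thebest2} replaced by the corresponding branchwise quantity, and the deduction of Theorem~\ref{main_extended} from it — the two claims about the essential spectral radius, using Proposition~\ref{propinvar} to pass from a fixed $n$ to the $\limsup$ over $n$ — is identical to the proof that Theorem~\ref{MainTheorembis} implies Theorem~\ref{MainTheorem}.

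The main obstacle is not a single hard estimate but the discipline of keeping the two stratifications and the two transversality conditions apart and checking, essentially line by line in the proof of Lemma~\ref{maintechlemma}, that each hypothesis is invoked only where it legitimately applies — in particular that no step secretly requires the $O_i$ to carry the cone field, or $\partial O_i$ to be transverse to the stable cones at time $0$. Once this is verified, everything else is routine re-indexing.
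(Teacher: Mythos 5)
Your proposal is correct and follows essentially the same route as the paper's own (sketched) proof: one re-runs the construction of Subsection~\ref{spaces} with charts and extended cones indexed by both the branch domains $O_i$ and the cone-continuity domains $\Pi_e$, with $1_{\Pi_\bm}$ in the norm, and then repeats the proof of Theorem~\ref{MainTheorembis} and the reduction of Subsection~\ref{reducr}, invoking the time-$0$ transversality of $\partial\Pi_e$ for the multiplier bound on $1_{\Pi_\bm}$ and the time-$1$ (image) transversality of $\partial O_i$ for the multiplier bound on $1_{T_\ii^n O_\ii}$, exactly as you describe.
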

In the case of a single-valued map, and when the sets $\Pi_e$
and $O_i$ coincide, this theorem reduces to Theorem
\ref{MainTheorem}. However, this extension is useful is many
cases. For instance, if there is a single cone field (i.e.,
$\Pi_1 = X$), then the transversality condition is only on the
images $T(O_i)$, it is therefore weaker than the condition in
Definition \ref{transs} (we already mentioned this fact and
its relevance for Sinai billiards in
Remark \ref{forbilliards}). Another interest of Theorem
\ref{main_extended} is that the class of operators studied
there is closed under time reversal. Indeed, for all functions
$\omega_1, \omega_2$, we have
  \begin{align}\label{addjoint}
  \int \omega_1 \LL_g \omega_2 \dLeb
  &
  = \sum_i \int_{T_i(O_i)} \omega_1 \cdot (g_i \omega_2)\circ T_i^{-1} \dLeb
  \\
\nonumber &
  = \sum_i \int_{O_i} (|\det DT_i| g_i \cdot \omega_1 \circ T_i) \cdot \omega_2 \dLeb.
  \end{align}
Therefore, the adjoint of $\LL_g$ is the operator
$\omega\mapsto \sum_i 1_{O_i} \Jac(T_i) g_i\cdot  \omega \circ
T_i$, to which Theorem \ref{main_extended} also applies (if
transversality with the unstable cones is satisfied). It is
sometimes more convenient to apply the theorem in this
direction, since its statement is not completely symmetric with
respect to the stable and unstable directions. An important
particular case, which will appear in Proposition ~
\ref{prop_futur} and its Corollary ~ \ref{goodforLozi}, and
which is useful when studying e.g. ~ Lozi maps, is when $g_i =
|\det DT_i|^{-1}$ for all $i$, and  the $O_i$ form a partition
of $X_0$. In this case, the dual operator is just
$\MM(\omega)=\omega \circ T$.

\begin{proof}[Sketch of proof of Theorem \ref{main_extended}]
The proof of Theorem \ref{MainTheorem} applies almost directly
to yield Theorem \ref{main_extended}, we should only modify
slightly the charts and the norm to take into account the fact
that the sets $\Pi_e$ and $O_i$ do not coincide, by introducing
an additional dependency on $e$.

More precisely, for every $i, e$, we can consider as in
Subsection \ref{spaces} charts $\kappa_{i,e,j}$ (for $1\leq
j\leq N_{i,e}$) whose domains of definitions $U_{i,e,j,0}$
cover a neighborhood of $\overline{\Pi_e}\cap \overline{O_i}$,
and extended cones $\CC_{i,e,j}$ such that, wherever
$\kappa_{i',e',j'}\circ T_i \circ \kappa_{i,e,j}^{-1}$ is
defined, its differential sends $\CC_{i,e,j}$ to
$\CC_{i',e',j'}$ compactly.

Let $U_{i,e,j,1}$ be a subset with compact closure of
$U_{i,e,j,0}$ such that the sets $U_{i,e,j,1}$ ($1\leq j\leq
N_{i,e}$) still cover $\overline{\Pi_e}\cap \overline{O_i}$. We
can then define sets $\ZZ_{i,e,j}(R)$ and
$\ZZ(R)=\{(i,e,j,m)\st m\in \ZZ_{i,e,j}(R)\}$ as in
\eqref{fset} and \eqref{fset2}. For $\zeta=(i,e,j,r) \in
\ZZ(R)$, let $\Pi_\zeta = \Pi_e$. We can then follow line by
line the discussion in Subsection \ref{spaces}, define a norm
  \begin{equation*}
  \norm{\omega}{\Phi}=\left(\sum_{\bm\in \ZZ(R)}
  \norm
  {(\brho_\bm(R) \cdot 1_{\Pi_\bm} \omega)\circ \bphi_\bm}
  {H_p^{t,s}}^p\right)^{1/p}
  \end{equation*}
for any system of charts $\Phi$, and finally put
$\norm{\omega}{\HHH_{p}^{t,s}(R, C_0, C_1)}
=\sup_{\Phi}\norm{\omega}{\Phi}$.

The proof of Theorem \ref{MainTheorembis} still works in this
context, with trivial notational modifications (one should
replace $1_{O_\zeta}$ and $1_{O_{\zeta'}}$ by $1_{\Pi_\zeta}$
and $1_{\Pi_{\zeta'}}$, and insert a characteristic function
$1_{\Pi_{\zeta'}}$ in \eqref{klqjsdfml}). The transversality of
the boundary of $\Pi_e$ with the stable cone is used at the
beginning of the second step to show that the multiplication by
$1_{\Pi_\zeta}$ is bounded on $H_p^{t,s}$, while the
transversality of the boundary of the image of $O_i$ with this
cone is used to show the same multiplier property for
$1_{T_\ii^n O_\ii}$.

Finally, the result follows from the analogue of Theorem
\ref{MainTheorembis}, by the arguments of Subsection
\ref{reducr}.
\end{proof}

\section{Physical measures}
\label{app_physical}

In this appendix, we discuss the existence of physical
measures, combining our main result Theorem \ref{MainTheorem} (or
its extension Theorem \ref{main_extended}), with Theorem 33 of
\cite{baladi_gouezel_piecewise}. The discussion is essentially
straightforward once the above results are given, apart from a
more subtle point: one should check that the possible physical
measures would give no mass to the discontinuity set of $T$.

Let us first give a convenient definition:
\begin{definition}
Let $T$ be a measurable map on an open subset $X_0$ with
compact closure of a manifold. A \emph{physical description} of
$T$ is a finite number of probability measures
$\mu_1,\dots,\mu_l$ which are $T$-invariant and ergodic, and
disjoint sets $A_1,\dots,A_l$ such that $\mu_i(A_i)=1$,
$\Leb(A_i)>0$, $\Leb(X_0\backslash \bigcup_{i=1}^l A_i)=0$ and,
for every $x\in A_i$ and every function $f\in C^0(X_0)$, we
have $\frac{1}{n}\sum_{j=0}^{n-1}f(T^j x) \to \int f\dd\mu_i$.
Moreover, for every $i$, there exist an integer $k_i$ and a
decomposition $\mu_i=\mu_{i,1}+\dots+\mu_{i,k_i}$ such that $T$
sends $\mu_{i,j}$ to $\mu_{i,j+1}$ for $j\in \Z/k_i\Z$, and the
probability measures $k_{i,j} \mu_{i,j}$ are mixing for
$T^{k_i}$.
\end{definition}
We could strengthen the requirements by requiring that the
measures $\mu_{i,j}$ are exponentially mixing for $T^{k_i}$
and H\"{o}lder observables,
and that all kinds of statistical limit theorems (central limit
theorem, strong invariance principle, etc.) are satisfied.
These additional properties will also hold in the
examples below.

Consider now a piecewise hyperbolic map $T$.
We will deal with a true (i.e.,
single-valued) map $T$, but we will not
necessarily assume that the continuity domains of the cone
families coincide with the continuity domains of $T$, as in
Appendix \ref{app_general}.

We give two results, corresponding to the application of our
main theorems in forward or backward time.

\begin{proposition}
\label{prop_past}
Let $T$ be a piecewise $C^{1+\alpha}$ hyperbolic map on a
domain $X_0$ with compact closure in a manifold $X$, such that
\begin{itemize}
\item the boundaries of the continuity domains of the cone
families are transverse to the stable cones,
\item the images under $T$ of the boundaries of the continuity
domains of $T$ are transverse to the stable cones.
\end{itemize}
Assume, for some $\beta\in (0,\alpha)$, the bunching condition
  \begin{equation*}
  \sup_{\ii \in I^n,\, q\in \overline O_\ii} \frac{
  \lambda^{(n)}_{\ii,s}(q)^{\alpha-\beta} \Lambda^{(n)}_{\ii,u}(q)^{1+\beta} }{\lambda^{(n)}_{\ii,u}(q)  }  <1
  \, .
\end{equation*}

Assume also that, for some parameters $p\in (1,\infty)$ and $t,
s \in \real$ with
  \begin{equation*}
  1/p-1<s<0<t<1/p\, ,
  \quad -\beta<t-|s| <0\, , \quad \alpha t+|s| < \alpha\, ,
  \end{equation*}
we have for some $n$
  \begin{equation}
  \label{D1}
  (D_{n}^b)^{1/(pn)} \cdot (D_{n}^e)^{(1/n)(1-1/p)}
  \cdot \norm{|\det DT^{n}|^{1/p -1}
  \max(\lambda_{u,n}^{-t},\lambda_{s,n}^{-(t-|s|)})}{L^\infty}^{1/n}
  < 1.
  \end{equation}
Then $T$ admits a physical description.
\end{proposition}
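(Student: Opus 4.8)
The plan is to apply our spectral theorem to the transfer operator with the canonical weight, and then feed the resulting spectral picture into Theorem~33 of \cite{baladi_gouezel_piecewise}, the only genuinely new point being to check that the invariant measures so produced give no mass to the discontinuity set. Concretely, I would take $g_i=|\det DT_i|^{-1}$ on $O_i$; then $g_\ii^{(n)}|\det DT_\ii^n|^{1/p}=|\det DT_\ii^n|^{1/p-1}$, so the bound on the essential spectral radius furnished by Theorem~\ref{main_extended} (which here reduces to Theorem~\ref{MainTheorem}) is exactly the $n$-th root of the left-hand side of \eqref{D1}. By submultiplicativity the limit in $n$ is therefore $<1$, so $\LL_g$ has essential spectral radius $<1$ on the space $\HHH$ provided by the theorem. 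From \eqref{addjoint} with $g_i|\det DT_i|\equiv 1$, the Lebesgue-adjoint of $\LL_g$ is $\omega\mapsto\omega\circ T$, which fixes the constant functions; hence $1$ lies in the spectrum, the spectral radius equals $1$, and the peripheral spectrum consists of finitely many eigenvalues of finite multiplicity. Averaging the peripheral spectral projectors applied to nonnegative functions produces nonnegative fixed distributions, i.e.\ finitely many $T$-invariant Radon measures on $\overline{X_0}$, which I would normalize to probability measures.

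Once these measures are known to live on the ``good'' set $X_0\setminus\bigcup_{n\ge 0}T^{-n}(\SS)$, where $\SS$ is the discontinuity set, Theorem~33 of \cite{baladi_gouezel_piecewise} applies word for word: $\HHH$ contains $C^1$ and embeds continuously in the negative-order Sobolev space $H_p^{t-|s|}(X_0)$ by Lemma~\ref{embed} (using $-\beta<t-|s|<0$), and the remaining hypotheses of that theorem concern only the abstract spectral situation just established. This yields finitely many ergodic invariant probability measures with $\Leb$-positive, pairwise disjoint basins covering $\Leb$-almost all of $X_0$, each mixing up to a finite period (in fact exponentially mixing for H\"older observables), that is, a physical description.

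The crux, and the only place real work is needed, is to show that each invariant measure $\mu$ above satisfies $\mu(\SS)=0$, where $\SS=\bigcup_i\partial O_i\cup\bigcup_e\partial\Pi_e$ is a finite union of $C^1$ hypersurfaces everywhere transverse to the stable cones by hypothesis; this then propagates to $\mu\bigl(\bigcup_{n\ge 0}T^{-n}\SS\bigr)=0$ since $\mu(T^{-1}A)=\mu(A)$ (the adjoint of $\LL_g$ being composition with $T$). To prove $\mu(\SS)=0$, I would argue as follows. Working in the charts $\bphi_\bm$ and zooming by a large $R$ so that the finitely many pieces of $\SS$ look like hyperplanes transverse to the stable directions, Lemma~\ref{lem:multiplier} (exactly as in Step~2 of the proof of Lemma~\ref{maintechlemma}, which is where the transversality assumptions on $\partial O_i$ and $\partial\Pi_e$ enter) shows that multiplication by $1_{X_0\setminus\SS}$ is bounded on $\HHH$; hence $1_\SS\,\mu=\mu-1_{X_0\setminus\SS}\,\mu\in\HHH$. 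On the other hand, a nonzero positive measure carried by a hypersurface transverse to the stable leaves cannot belong, even locally, to $H_p^{t,s}$: since $t>0$ we have $H_p^{t,s}\subset H_p^{0,s}$, and membership in $H_p^{0,s}$ would force a Dirac mass in the stable coordinate transverse to the hypersurface to lie in the Bessel space $H_p^{s}(\R^{d_s})$, which is impossible because $s>1/p-1\ge -d_s(1-1/p)$. This contradiction gives $1_\SS\,\mu=0$; the same reasoning applies to every measure in the peripheral eigenspace, and Theorem~33 of \cite{baladi_gouezel_piecewise} then delivers the stated physical description.
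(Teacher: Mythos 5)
Your overall route is the same as the paper's: take $g=|\det DT|^{-1}$, get essential spectral radius $<1$ on $\HHH$ from the spectral theorem (here one really needs Theorem~\ref{main_extended}, since the transversality hypothesis is only on the \emph{images} of the boundaries of the continuity domains of $T$), and then feed this into \cite[Theorem~33]{baladi_gouezel_piecewise} — in the paper, into its strengthening Theorem~\ref{thm_physical} — the one hypothesis requiring genuine work being that the candidate invariant measures give no mass to the discontinuity set $\SS$. It is exactly at this crux that your argument has a gap. The multiplication operator by $1_{X_0\setminus\SS}$ on $\HHH$ is defined on $H_0=\HHH\cap L^\infty(\Leb)$ and extended by continuity; but since $\Leb(\SS)=0$, one has $1_{X_0\setminus\SS}u=u$ Lebesgue-a.e.\ for every $u\in H_0$, hence equality in $\HHH$, so the continuous extension is the identity on all of $H$. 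Consequently the element you call $1_\SS\,\mu=\mu-1_{X_0\setminus\SS}\,\mu$ is $0$ by construction, whatever $\mu_u(\SS)$ is, and the ``contradiction'' is vacuous. If instead you act on the measure itself viewed in a chart, the continuous extension of the multiplier (defined through approximation by nice functions) need not coincide with the measure-theoretic restriction precisely when the measure charges the hypersurface — which is the very situation you are trying to exclude, so the identification is circular. The underlying obstruction is that $\langle u,g\dLeb\rangle=\int g\dd\mu_u$ is only known for $C^\alpha$ test functions $g$, and $1_{X_0\setminus\SS}\,g$ is not one.

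Your second ingredient — that a nonzero nonnegative measure carried by a hypersurface transverse to the stable leaves cannot lie locally in $H_p^{t,s}$, because $s>1/p-1$ forbids the transverse Dirac factor — is correct and is indeed the mechanism behind the needed statement, but it has to be brought to bear by pairing $u$ against \emph{smooth} test functions concentrating on $\SS$, whose $H_{p'}^{-t,-s}$ (dual) norms tend to zero thanks to $|s|<1-1/p$ and the transversality in admissible charts, rather than via indicator multipliers. This is exactly what the paper does by invoking \cite[Lemma~34]{baladi_gouezel_piecewise}, after first fixing a single admissible chart system and passing to the larger one-chart space $\tilde H\supset\HHH$, so that a limit point $u$ of $H_0$ is known to have local $H_p^{t,s}$-type regularity that can be tested (the $\HHH$-norm being a supremum over infinitely many chart systems, this reduction is not cosmetic, and you skip it). A smaller point: \cite[Theorem~33]{baladi_gouezel_piecewise} as stated yields Birkhoff convergence only for observables in the $C^0$-closure of $H_0$; the paper proves the strengthened Theorem~\ref{thm_physical} precisely to upgrade this to all continuous observables, so that is the statement you should apply (your remarks on the peripheral spectrum are subsumed in its proof).
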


\begin{proposition}
\label{prop_futur}
Let $T$ be a piecewise $C^{1+\alpha}$ hyperbolic map on a
domain $X_0$ with compact closure in a manifold $X$, such that
\begin{itemize}
\item the boundaries of the continuity domains of the cone
families are transverse to the unstable cones,
\item the preimages under $T$ of the boundaries of the continuity
domains of $T$ are transverse to the unstable cones.
\end{itemize}
Assume, for some $\beta\in (0,\alpha)$, the bunching condition
  \begin{equation*}
  \sup_{\ii \in I^n,\, q\in \overline O_\ii} \frac{
  \lambda^{(n)}_{\ii,u}(q)^{\alpha-\beta} \Lambda^{(n)}_{\ii,s}(q)^{1+\beta} }{\lambda^{(n)}_{\ii,s}(q)  }  > 1
  \, .
\end{equation*}

Assume also that, for some parameters $p\in (1,\infty)$ and $t,
s \in \real$ with
  \begin{equation*}
  1/p-1<s<0<t<1/p\, ,
  \quad -\beta<|s|-t <0\, , \quad \alpha |s| + t < \alpha\, ,
  \end{equation*}
we have for some $n$
  \begin{equation}
  \label{D2}
  (D_{n}^b)^{1/(pn)} \cdot (D_{n}^e)^{(1/n)(1-1/p)}
  \cdot \norm{|\det DT^{n}|^{1/p -1}
  \max(\lambda_{u,n}^{|s|-t},\lambda_{s,n}^{|s|})}{L^\infty}^{1/n}
  < 1.
  \end{equation}
Then $T$ admits a physical description.
\end{proposition}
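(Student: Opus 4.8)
The plan is to apply Theorem~\ref{main_extended} in the reversed direction of time, as anticipated in Appendix~\ref{app_general}, and then to invoke Theorem~33 of \cite{baladi_gouezel_piecewise}. First I would work with the Koopman operator $\MM\omega=\omega\circ T$: by \eqref{addjoint} it is the $\Leb$-adjoint of $\LL_{|\det DT|^{-1}}$, and, as observed in Appendix~\ref{app_general}, it is the generalized transfer operator (in the sense of Theorem~\ref{main_extended}) of the multivalued ``inverse'' system $S$ whose branches are the diffeomorphisms $T_i^{-1}\colon T_i(O_i)\to X$, carrying constant weights equal to $1$. For $S$ the role of the stable cones is played by the unstable cones of $T$, the continuity domains of the cone families are unchanged, and the discontinuity hypersurfaces are the images $T_i(\partial O_i)$, whose $S$-preimages are the hypersurfaces $T^{-1}(\partial O_i)$; hence the two transversality requirements of Theorem~\ref{main_extended} for $S$ are exactly the two transversality assumptions of Proposition~\ref{prop_futur}.

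Next I would apply Theorem~\ref{main_extended} to $S$ with parameters $p'=p/(p-1)$, $t'=|s|$, $s'=-t$, keeping the same $(\alpha,\beta)$. Using the reciprocal identities between the hyperbolicity coefficients of $T$ and of $S$ (the weakest and strongest expansion rates of $S$ on its unstable cone are $1/\lambda_s$ and $1/\Lambda_s$, and symmetrically for contraction, together with the reversal $\ii\mapsto\hat\ii$ of index words and $|\det DS^n|=|\det DT^n|^{-1}$ along orbits), one checks routinely that: the constraints \eqref{pstcond} for $(p',t',s')$ are precisely $1/p-1<s<0<t<1/p$, $-\beta<|s|-t<0$, $\alpha|s|+t<\alpha$; the bunching conditions \eqref{bunch}--\eqref{bunch2} for $S$ coincide with the bunching hypothesis of Proposition~\ref{prop_futur}; the complexities are exchanged, $D^b_n(S)=D^e_n(T)$ and $D^e_n(S)=D^b_n(T)$; and, since $1/p'=1-1/p$, the essential spectral radius bound that Theorem~\ref{main_extended} furnishes for $\MM$ is
\[
\lim_{n\to\infty}(D^b_n)^{1/(pn)}(D^e_n)^{(1/n)(1-1/p)}\bigl\||\det DT^n|^{1/p-1}\max(\lambda_{u,n}^{|s|-t},\lambda_{s,n}^{|s|})\bigr\|_{L^\infty}^{1/n}.
\]
By submultiplicativity this limit equals the pre-limit expression evaluated at the distinguished $n$, so hypothesis \eqref{D2} forces it to be $<1$. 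Thus $\MM$ acts continuously on a space $\HHH$ of distributions on $X$ of the type produced by Theorem~\ref{main_extended} (containing $C^1$, with $L^\infty\cap\HHH$ dense in it) with $\ress(\MM,\HHH)<1$.

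It then remains to read off the physical measures of $T$. Since $\MM$ is the $\Leb$-adjoint of $\LL_{|\det DT|^{-1}}$, the latter operator is a realization of $\MM^{*}$ on the dual space $\HHH^{*}$, with $\ress(\LL_{|\det DT|^{-1}},\HHH^{*})=\ress(\MM,\HHH)<1$; and $\HHH^{*}$ is an anisotropic space of the kind to which Theorem~33 of \cite{baladi_gouezel_piecewise} applies, being distributional along the stable leaves of $T$ and regular along the unstable ones. As in the discussion following Theorem~\ref{MainTheorem}, the spectral radius of $\LL_{|\det DT|^{-1}}$ is then $1$, and Theorem~33 of \cite{baladi_gouezel_piecewise} produces finitely many $T$-invariant ergodic probability measures $\mu_1,\dots,\mu_l$ with disjoint basins covering Lebesgue-almost all of $X_0$ and with the mixing and periodicity structure of a physical description (equivalently, the $\mu_i$ are the eigenfunctionals of $\MM$ at the eigenvalues of modulus $1$).

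The step I expect to be the main obstacle is the one the appendix already singles out as ``more subtle'': verifying that each $\mu_i$ gives no mass to the discontinuity set $\bigcup_k\partial O_k$, which is needed for the $\mu_i$ to be genuinely $T$-invariant and for Lebesgue-a.e.\ orbit to equidistribute. Here I would argue that $\mu_i$, being a fixed point of $\LL_{|\det DT|^{-1}}$ in the anisotropic space $\HHH^{*}$, has absolutely continuous conditional measures along the unstable leaves; by the second transversality assumption each hypersurface $T^{-1}(\partial O_k)$ is transverse to the unstable cones, hence meets every unstable leaf in a leafwise-null set, so $\mu_i(T^{-1}(\partial O_k))=0$ and, by $T$-invariance, $\mu_i(\partial O_k)=0$. (The forward-time variant, Proposition~\ref{prop_past}, is handled the same way, applying Theorem~\ref{main_extended} directly to $\LL_{|\det DT|^{-1}}$ with $g=|\det DT|^{-1}$ so that \eqref{thebest} reduces to the left-hand side of \eqref{D1}, the no-mass point then relying on transversality of the $\partial O_k$ themselves with the stable cones.)
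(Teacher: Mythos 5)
The first half of your argument is exactly the paper's route: view $\MM\omega=\omega\circ T$ as the generalized transfer operator (with weights $1$) of the multivalued inverse system, apply Theorem~\ref{main_extended} with the dual parameters $p'=p/(p-1)$, $t'=-s$, $s'=-t$, and check that the transversality, bunching, complexity and \eqref{D2} hypotheses translate correctly (one small slip: submultiplicativity gives that the limit is \emph{at most} the value at the distinguished $n$, not equal to it). The genuine gap lies in how you pass from $\ress(\MM,\HHH)<1$ to a physical description. You propose to let $\LL_{|\det DT|^{-1}}$ act on the abstract dual $\HHH^{*}$ and to quote Theorem~33 of \cite{baladi_gouezel_piecewise} there; but that theorem requires a normed subspace $H_0\subset L^\infty(\Leb)$ with quite specific properties (stability under multiplication by H\"older functions, boundedness of integration, a function $f_0$ equal to $1$ on $T^{N_0}(X_0)$, and the no-mass condition on the discontinuity set), and $\HHH^{*}$ is not a space of bounded functions. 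The paper instead takes $H_0=\{u\in L^\infty(\Leb)\st \nor{u}<\infty\}$ for the dual norm $\nor{u}=\sup\{|\int uv\dLeb| : v\in \HHH\cap L^\infty,\ \norm{v}{\HHH}\le 1\}$, and even then cannot invoke Theorem~33 verbatim: since it is not known whether $C^1$ is dense in $\HHH$, the completion of this $H_0$ may fail to be a space of distributions, which is precisely why the appendix proves the strengthened Theorem~\ref{thm_physical} (quotienting out the problematic peripheral subspaces and upgrading Birkhoff convergence to all continuous observables via $f_0$). You also need, and do not establish, the uniform boundedness of the iterates of $\MM$ (no eigenvalue of modulus $>1$ and no Jordan blocks on the unit circle), which the paper obtains from $\HHH$ being a space of distributions, as in the first step of the proof of Theorem~33 of \cite{baladi_gouezel_piecewise}.

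The more serious gap is your treatment of the ``no mass on the discontinuity set'' condition. You assert that an eigenfunctional of $\LL_{|\det DT|^{-1}}$ in $\HHH^{*}$ has absolutely continuous conditional measures along unstable leaves and then conclude leafwise by transversality; nothing in the construction of $\HHH$ yields such a disintegration, and proving it in the piecewise-smooth setting would be a substantial piece of work in its own right (essentially constructing the SRB structure that the whole machinery is designed to produce). The paper's actual argument is much softer and purely functional-analytic: for each boundary hypersurface $Q$ one takes shrinking neighborhoods $K_n(Q)$ with sides parallel to $Q$; by the transversality assumption and the multiplier bound (Lemma~\ref{lem:multiplier}, via Step~2 of the proof of Lemma~\ref{maintechlemma}) the functions $1_{K_n(Q)}$ are uniformly bounded in $\HHH_{p'}^{t'',s'}(R,C_0,C_1)$ for some $t''>t'$; by the compact embedding of Lemma~\ref{embed} they are precompact in $\HHH$, and since their only possible distributional limit is $0$ (as $\Leb(K_n(Q))\to 0$ and $\HHH$ \emph{is} a space of distributions), $1_{K_n}\to 0$ in $\HHH$. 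Pairing a nonnegative candidate $u$ in the completion of $H_0$ with $1_{K_n}$ then gives $\mu_u(\Disc T)\le \nor{u}\,\norm{1_{K_n}}{\HHH}\to 0$. Without this argument (or an equally rigorous substitute) your verification of condition~(5) of Theorem~\ref{thm_physical} does not go through.
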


In both propositions, if $D_n^b$ and $D_n^e$ grow
subexponentially fast and $\det DT \equiv 1$, the limit in
\eqref{D1} and \eqref{D2} is $<1$ for any valid choice of
parameters $p,s,t$. If $\det DT\not\equiv 1$, one should choose
the parameters more carefully, as in the next corollary.
\begin{corollary}\label{goodforLozi}
If $D_n^b$ and $D_n^e$ grow subexponentially fast, $d_s = 1$
and the transversality conditions of Proposition
\ref{prop_futur} are satisfied, then $T$ admits a physical
description.
\end{corollary}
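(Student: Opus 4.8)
The plan is to deduce the corollary from Proposition~\ref{prop_futur}: the transversality conditions are assumed, so it remains to fix $\beta\in(0,\alpha)$ for which the bunching condition holds and to exhibit admissible exponents $p,s,t$ and an $n$ making \eqref{D2} true. Since $D^b_n$ and $D^e_n$ grow subexponentially, the factors $(D^b_n)^{1/(pn)}$ and $(D^e_n)^{(1/n)(1-1/p)}$ in \eqref{D2} tend to $1$, hence play no role.

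The key structural input is that $d_s=1$. Under backward iteration the stable cones contract, at a uniform exponential rate, onto a one-dimensional direction; hence there is a constant $\Cs$, independent of $\ii,q,n$, with $\lambda_{\ii,s}^{(n)}(q)\le\Cs\,\Lambda_{\ii,s}^{(n)}(q)$, and consequently $|\det DT^n(q)|\ge\Cs^{-1}\lambda^{(d-1)n}\lambda_{s,n}(q)$ and $|\det DT^n(q)|\ge\Cs^{-1}\lambda_{u,n}(q)^{d-1}\lambda_0^{-n}$, where $\lambda>1$ is the uniform hyperbolicity rate and $\lambda_0\ge\lambda$ bounds the norms of $DT_i$ and $DT_i^{-1}$. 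Using $\Lambda_{\ii,s}^{(n)}\ge\Cs^{-1}\lambda_{\ii,s}^{(n)}$ together with $\lambda_{\ii,u}^{(n)}(q)\ge\lambda^n$ and $\lambda_{\ii,s}^{(n)}(q)\ge\lambda_0^{-n}$, the bunching expression of Proposition~\ref{prop_futur} is at least $\Cs^{-(1+\beta)}\lambda_{\ii,u}^{(n)}(q)^{\alpha-\beta}\lambda_{\ii,s}^{(n)}(q)^{\beta}\ge\Cs^{-(1+\beta)}\bigl(\lambda^{\alpha-\beta}\lambda_0^{-\beta}\bigr)^{n}$, which exceeds $1$ for all large $n$ as soon as $\beta$ is small enough that $\lambda^{\alpha-\beta}\lambda_0^{-\beta}>1$; this is the $d_s=1$ counterpart of the footnote ``Condition~\eqref{bunch} always holds if $d_u=1$''. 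Fix such a $\beta$.

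It remains to choose admissible $p,s,t$ (i.e.\ with $1/p-1<s<0<t<1/p$, $0<t-|s|<\beta$, $\alpha|s|+t<\alpha$) for which
\[
\limsup_{n\to\infty}\bigl\|\,|\det DT^n|^{1/p-1}\max(\lambda_{u,n}^{|s|-t},\lambda_{s,n}^{|s|})\,\bigr\|_{L^\infty}^{1/n}<1 .
\]
I would bound the two terms of the maximum pointwise. Using the refined lower bound $|\det DT^n|\ge\Cs^{-1}\lambda^{(d-1)n}\lambda_{s,n}$, together with $0<1-1/p-|s|$ (which is just $1/p-1<s$) and $\lambda_0^{-n}\le\lambda_{s,n}\le\lambda^{-n}$, one gets $|\det DT^n|^{1/p-1}\lambda_{s,n}^{|s|}\le\Cs\bigl(\lambda^{-(d-1)(1-1/p)}\lambda_0^{1-1/p-|s|}\bigr)^{n}$; and using $|\det DT^n|\ge\Cs^{-1}\lambda_{u,n}^{d-1}\lambda_0^{-n}$ with $\lambda_{u,n}\ge\lambda^n$, one gets $|\det DT^n|^{1/p-1}\lambda_{u,n}^{|s|-t}\le\Cs\bigl((\lambda_0\lambda^{-(d-1)})^{1-1/p}\lambda^{-(t-|s|)}\bigr)^{n}$. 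Both bases lie in $(0,1)$ once one takes $t-|s|=\beta/2$, then $p$ close enough to $1$, and then $|s|$ in the interval $\bigl((1-1/p)\theta,\ \min(1-1/p,\,1/p-\beta/2,\,\tfrac{\alpha-\beta/2}{\alpha+1})\bigr)$, where $\theta=\max\bigl(0,\,1-(d-1)\tfrac{\ln\lambda}{\ln\lambda_0}\bigr)\in[0,1)$; this interval is nonempty once $p$ is near $1$, because $\theta<1$ (here $d\ge2$ enters) while the remaining upper bounds are fixed and positive (using $\beta<\alpha$). For such $p,s,t$ the displayed $\limsup$ is $<1$, so \eqref{D2} holds for all large $n$, and Proposition~\ref{prop_futur} yields a physical description of $T$.

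The main obstacle is precisely this exponent bookkeeping in the genuinely dissipative regime (where $\lambda_0$ may be arbitrarily large): the refined bound $|\det DT^n|\ge\Cs^{-1}\lambda^{(d-1)n}\lambda_{s,n}$ — a consequence of $d_s=1$ — is what lets the first error rate be controlled by choosing $|s|$ close to $1-1/p$, while the second is controlled by pushing $p$ towards $1$. The $d_s=1$ hypothesis thus does double duty, simultaneously rescuing the bunching condition and these Jacobian lower bounds.
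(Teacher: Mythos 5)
Your proof is correct and takes essentially the same route as the paper's: reduce to Proposition~\ref{prop_futur}, use $d_s=1$ to secure the bunching condition for some small $\beta$ (the paper asserts this, in the spirit of its footnote to Theorem~\ref{MainTheorem}, at the same level of detail as your comparability claim $\lambda_{\ii,s}^{(n)}\le \Cs\,\Lambda_{\ii,s}^{(n)}$), and then verify \eqref{D2} in exactly the paper's parameter regime ($p$ close to $1$, $|s|$ close to $1-1/p$, $t-|s|$ of order $\beta/2$). The only difference is that you carry out explicitly the exponent computation that the paper delegates to Example~3 of \cite{baladi_gouezel_piecewise}, and your bookkeeping there is consistent with it.
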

\begin{proof}
Since $d_s=1$, we can fix $\beta>0$ such that the bunching
condition of Proposition \ref{prop_futur} is satisfied. Then
the limit in \eqref{D2} is $<1$ if $p$ is very close to $1$,
$t=\beta/2$ and $s=1/p-1+\epsilon$ for some very small
$\epsilon$: the easy computation is the same as in
\cite[Example 3]{baladi_gouezel_piecewise}. Therefore,
Proposition \ref{prop_futur} gives the result.
\end{proof}

We state here the slightly stronger version of \cite[Theorem
33]{baladi_gouezel_piecewise}  that we shall need to prove the
two propositions above:

\begin{theorem}
\label{thm_physical}
Let $T$ be a nonsingular measurable map on an open subset $X_0$
with compact closure in a manifold $X$. Let us define its
transfer operator $\LSRB$ by $\int_{X_0} \LSRB u\cdot
v\dLeb=\int_{X_0} u\cdot v\circ T \dLeb$ whenever $v$ is
bounded and measurable. It is given by $\LSRB u(x)=\sum_{Ty=x}
|\det DT(y)|^{-1}u(y)$.

Let $H_0$ be a vector subspace of $L^\infty(\Leb)$, endowed
with a (possibly non--complete) norm $\nor{\cdot}$. Assume that
\begin{enumerate}
\item There exist $\alpha>0$ and $C>0$ such that, for any $u\in H_0$
and $f\in C^\alpha(X)$, then $fu\in H_0$ and $\nor{fu}\leq
C\norm{f}{C^\alpha}\nor{u}$.
\item There exists $C>0$ such that, for any $u\in H_0$, $\left|\int u\dLeb \right| \leq
C\nor{u}$.
\item The transfer operator $\LSRB$ associated to $T$ sends
$H_0$ to itself, and satisfies $\nor{\LSRB u} \leq C
\nor{u}$. Therefore, $\LSRB$ admits a continuous extension
to the completion $H$ of $H_0$ (still denoted by $\LSRB$).
We assume that the essential spectral radius of this
extension is $<1$, and that the iterates of $\LSRB$ are
uniformly bounded.
\item There exist $f_0\in H_0$ taking its values in $[0,1]$ and $N_0>0$ such
that $f_0=1$ on $T^{N_0}(X_0)$.
\item For any $u\in H$ which is a limit of nonnegative functions $u_n\in
H_0$ and for which there exists a measure $\mu_u$ such that
$\langle u, g\dLeb \rangle=\int g\dd\mu_u$ for any
$C^\alpha$ function $g$, then the measure $\mu_u$ gives
zero mass to the discontinuity set of $T$.
\end{enumerate}
Then $T$ admits a physical description.
\end{theorem}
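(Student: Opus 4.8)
The plan is to repeat the proof of \cite[Theorem~33]{baladi_gouezel_piecewise}, the only new input being hypothesis~(5), and to produce the physical description in three movements: fix the spectral picture, extract finitely many ergodic invariant measures living off $\Disc(T)$ by a cone argument, and build the Lebesgue basins. First, since $\ress(\LL,H)<1$ and the iterates of $\LL$ are uniformly bounded, the peripheral spectrum of $\LL$ is a finite set of eigenvalues of finite multiplicity with no Jordan blocks, and one may write $\LL=\sum_j\lambda_j\Pi_j+R$ with finite-rank projectors $\Pi_j$ satisfying $\Pi_jR=R\Pi_j=\Pi_j\Pi_k=0$ ($j\neq k$) and $r(R)<1$. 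The duality relation $\int\LL u\cdot v\dLeb=\int u\cdot(v\circ T)\dLeb$ with $v\equiv1$ gives $\int\LL^nf_0\dLeb=\int f_0\dLeb=:c_0$ for all $n$, and $c_0>0$: indeed $\int\LL^{N_0}\mathbf 1_{X_0}\dLeb=\Leb(X_0)>0$, so $\{\LL^{N_0}\mathbf 1_{X_0}>0\}$, which is contained in $T^{N_0}(X_0)$, has positive measure, whence $\int f_0\dLeb\ge\Leb(T^{N_0}(X_0))>0$. Since $c_0=|\langle\LL^nf_0,\dLeb\rangle|\le C\|\LL^nf_0\|$ by hypothesis~(2), $\|\LL^nf_0\|$ does not tend to $0$; together with $\ress(\LL,H)<1$ and power-boundedness this forces $r(\LL)=1$, so the peripheral spectrum is nonempty. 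Finally, the averages $\frac1N\sum_{n<N}\LL^nf_0$ converge in $H$ (to the spectral projection $\Pi_1f_0$ onto the eigenvalue $1$, which is $0$ if $1$ is not an eigenvalue), and passing to the limit in $\langle\frac1N\sum_{n<N}\LL^nf_0,\dLeb\rangle=c_0$ gives $\langle\Pi_1f_0,\dLeb\rangle=c_0>0$, so $\Pi_1f_0\neq0$ and $1$ is an eigenvalue of $\LL$.

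Next, $\LL$ preserves nonnegativity, so for $\phi\in H_0$ with $\phi\ge0$ the averages $\frac1N\sum_{n<N}\LL^n\phi$ lie in $H_0$, are nonnegative, and converge in $H$ to $\Pi_1\phi$. Thus every element of the closed convex cone $\mathcal K:=\overline{\{\Pi_1\phi:\phi\in H_0,\ \phi\ge0\}}\subset\Pi_1H$ is a limit of nonnegative elements of $H_0$; by hypotheses~(1)--(2) the functional $g\mapsto\langle u,g\dLeb\rangle$ on $C^\alpha$ is, for $u\in\mathcal K$, positive and bounded, hence represented by a finite nonnegative Borel measure $\mu_u$, which is $T$-invariant since $u$ is an $\LL$-fixed point and \emph{by hypothesis~(5) gives zero mass to $\Disc(T)$}. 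As $\Pi_1H$ is finite-dimensional, the Perron--Frobenius analysis of the positivity-preserving action of $\LL$ on $\mathcal K$, carried out as in \cite[Theorem~33]{baladi_gouezel_piecewise}, yields finitely many ergodic $T$-invariant probability measures $\mu_1,\dots,\mu_\ell$ whose nonnegative combinations exhaust the measures represented by $\mathcal K$; the zero-mass property passes to ergodic components, so each $\mu_i(\Disc(T))=0$, which is exactly what makes $(X_0,\mu_i,T)$ an honest measure-preserving system, all iterates $T^n$ being $\mu_i$-a.e.\ defined. The root-of-unity structure of the remaining peripheral eigenvalues furnishes, for each $i$, the period $k_i$ and the cyclic decomposition $\mu_i=\mu_{i,1}+\dots+\mu_{i,k_i}$ permuted by $T$, with $k_i\mu_{i,j}$ mixing for $T^{k_i}$ (spectral gap of $\LL^{k_i}$ on the relevant eigenline).

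It remains to construct the basins. Applying the above to $\phi=f_0$ gives $\frac1N\sum_{n<N}\LL^nf_0\to\Pi_1f_0=\sum_ic_i\mu_i$ in $H$, a nonnegative combination of total mass $c_0$, and testing against $g\in C^0$ yields $\int f_0\cdot\frac1N\sum_{n<N}g\circ T^n\dLeb\to\sum_ic_i\int g\dd\mu_i$: convergence in the mean of Birkhoff averages relative to $f_0\dLeb$. Upgrading this to $f_0\dLeb$-a.e.\ convergence on a partition into basins is the substantive point, and is done as in \cite[Theorem~33]{baladi_gouezel_piecewise}: the spectral gap (exponential decay of $\|R^n\|$) controls the $L^2$-oscillation of Birkhoff sums, a density-point and Borel--Cantelli argument along a sparse subsequence selects for $f_0\dLeb$-a.e.\ $x$ the limiting component, and the pointwise ergodic theorem for each $\mu_i$ identifies it with $\int\cdot\dd\mu_i$. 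Since $T^n(X_0)\subset T^{N_0}(X_0)\subset\{f_0=1\}$ for $n\ge N_0$ and $T^{N_0}$ is nonsingular, pulling these basins back under $T^{N_0}$ turns the $f_0\dLeb$-a.e.\ statement into a $\Leb$-a.e.\ one, giving disjoint $A_i$ with $\mu_i(A_i)=1$, $\Leb(A_i)>0$, $\Leb(X_0\setminus\bigcup_iA_i)=0$ and Birkhoff averages converging to $\mu_i$ on $A_i$ --- the desired physical description.

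I expect the main obstacle to be precisely this last movement: the passage from mean to $\Leb$-a.e.\ convergence and the identification of the basins, where the full force of the spectral gap (not just finiteness of the peripheral spectrum) is used and which I would import wholesale from \cite[Theorem~33]{baladi_gouezel_piecewise}, compounded by the bookkeeping --- handled by hypothesis~(5) --- that keeps the candidate measures off $\Disc(T)$ so that invariance, ergodicity and orbit statistics are all meaningful; everything else is routine manipulation of finitely many spectral projectors. For the two Propositions one then merely verifies~(1)--(5) for $H=\HHH$ of Theorem~\ref{MainTheorem} (resp.\ Theorem~\ref{main_extended}), where it is the transversality hypotheses --- ensuring that $C^1$ functions lie in $\HHH$ and that a measure represented by a limit of nonnegative elements cannot charge the transversal hypersurfaces comprising $\Disc(T)$ --- that supply hypothesis~(5).
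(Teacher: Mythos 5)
Your overall strategy --- re-running the proof of \cite[Theorem 33]{baladi_gouezel_piecewise} with hypothesis (5) feeding into the construction of the invariant measures, and using $f_0$ together with nonsingularity of $T^{N_0}$ to pass from the reference measure $f_0\dLeb$ to Lebesgue and to all continuous observables --- is the same reduction the paper performs. Your treatment of one of the two points where the statement is genuinely stronger than \cite[Theorem 33]{baladi_gouezel_piecewise} (there, Birkhoff convergence is obtained only for observables in the $C^0$-closure of $H_0$) is acceptable and close in spirit to the paper's: they apply the argument of \cite{baladi_gouezel_piecewise} to the countable family $g_n f_0\in H_0$, with $g_n$ dense in $C^0$, and use $(g_n f_0)\circ T^k=g_n\circ T^k$ for $k\geq N_0$, whereas you pull the basins back under $T^{N_0}$; both devices work.

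The genuine gap is the other strengthening, which you do not address: Theorem \ref{thm_physical} does not assume that $H$ is a space of distributions, i.e.\ the pairing $u\mapsto\langle u,g\dLeb\rangle$ may have a nontrivial kernel $G=\{u\in H \st \langle u,g\dLeb\rangle=0 \ \forall g\in C^\alpha\}$ (this is exactly why the paper needs the stronger statement for Proposition \ref{prop_futur}, where density of $C^1$ in $\HHH$ is unknown). Your second and third movements tacitly identify the peripheral spectral data of $\LSRB$ on $H$ with measure-theoretic data: you invoke ``the root-of-unity structure of the remaining peripheral eigenvalues'' and deduce mixing of $k_i\mu_{i,j}$ from ``the spectral gap of $\LSRB^{k_i}$ on the relevant eigenline''. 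If $G\neq\{0\}$, peripheral eigenvectors lying in $G$ are invisible to every $C^\alpha$ test function; nothing in hypotheses (1)--(5) forces the corresponding eigenvalues to be roots of unity, and the positivity/measure-representation arguments of \cite{baladi_gouezel_piecewise} simply do not apply to them, so the correlation decomposition behind your mixing claim is not justified. The naive remedy of quotienting by $G$ fails because $G$ need not be $\LSRB$-invariant: $\langle \LSRB u, g\dLeb\rangle=\langle u,(g\circ T)\dLeb\rangle$ is only meaningful for $u\in H_0$, since $g\circ T$ is not $C^\alpha$. The paper's proof supplies precisely the missing device: it quotients only by $\bigoplus_{|\lambda|=1}\bigl(E_\lambda\cap G\bigr)$, which is $\LSRB$-invariant because $\LSRB$ acts as the scalar $\lambda$ on each peripheral eigenspace $E_\lambda$, and then runs the argument of \cite{baladi_gouezel_piecewise} on the quotient. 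Relatedly, hypothesis (5) is not ``the only new input'' relative to \cite[Theorem 33]{baladi_gouezel_piecewise}: its role is unchanged there; the new content of Theorem \ref{thm_physical} is exactly the dropped distribution assumption (which your sketch misses) and the upgrade to all continuous observables (which you handle).
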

In the fifth point, $\langle u, g\dLeb \rangle$ is defined as
follows. A function $u\in H_0$ can be multiplied by $g$ and
then integrated against Lebesgue measure. Those operations are
continuous for the norm (by the first and second assumption),
and therefore extend to $H$.

Theorem
\ref{thm_physical} is stronger than \cite[Theorem
33]{baladi_gouezel_piecewise} for the following reasons:
\begin{itemize}
\item We do not assume that the space $H$ is a space of distributions,
i.e., there may be elements $u\in H$ with $\langle u,
g\dLeb\rangle = 0$ for any $C^\infty$ function $g$. The
space $H_0$ used in the proof of Proposition
\ref{prop_past} is a space of distributions, but this is
not clear for the space $H_0$ used in the proof of
Proposition \ref{prop_futur} (it would be true if $C^1$
were dense in $\HHH$, but we do not know if this holds).
This is why we had to abstain from using this assumption in
Theorem \ref{thm_physical}.
\item The conclusion ``$T$ admits a physical description''
gives the convergence of Birkhoff sums for all continuous
functions, while \cite[Theorem
33]{baladi_gouezel_piecewise} obtains such a convergence
only for functions in the closure of $H_0$ for the $C^0$
norm.
\end{itemize}

We next  show how to reduce
Theorem \ref{thm_physical} to  \cite[Theorem
33]{baladi_gouezel_piecewise}.
\begin{proof}[Proof of Theorem \ref{thm_physical}]
We first deal with the second issue, that \cite[Theorem
33]{baladi_gouezel_piecewise} proves the convergence of
Birkhoff sums only for functions in the closure of $H_0$ in the
$C^0$ norm. In fact, the proof in
\cite{baladi_gouezel_piecewise} gives this convergence for any
countable family of functions in $H_0$. Let $g_n$ be a family
of $C^\alpha$ functions, dense in $C^0$. We obtain the
convergence of Birkhoff sums for all the functions $g_n f_0$,
since they all belong to $H_0$ by assumption. Moreover, for all
$k\geq N_0$, $(g_n f_0) \circ T^k = g_n\circ T^k$. Therefore,
the convergence of Birkhoff sums also holds for all the
functions $g_n$. Since they are dense in $C^0$, this concludes
the proof.

Let us now deal with the first problem, that $H$ is not
necessarily a space of distributions. Let $G\subset H$ be the
problematic subspace, i.e., $G=\{ u\in H \st \langle u,
g\dLeb\rangle = 0 \text{ for all }g\in C^\alpha\}$. If
$G=\{0\}$, the results of \cite{baladi_gouezel_piecewise}
directly apply, otherwise we have to eliminate it. We can not
work directly with the quotient space $H/G$, since it is
possible that $G$ is not invariant under $\LSRB$. On the other
hand, for $|\lambda|= 1$, let $E_\lambda \subset H$ be the
eigenspace of $\LSRB$ for the eigenvalue $\lambda$, then
$F_\lambda=E_\lambda\cap G$ is invariant under $\LSRB$. All the
arguments in \cite{baladi_gouezel_piecewise} then apply on
$H/\bigoplus F_\lambda$ (modulo straightforward adjustments).
\end{proof}

\begin{proof}[Proof of Proposition \ref{prop_past}]
By Theorem \ref{MainTheorem}, under the assumptions of the
proposition, we may construct a Banach space $\HHH$ (of
distributions) on which the essential spectral radius of
$\LSRB$ (as defined in the statement of Theorem~
\ref{thm_physical}) is $<1$. To simplify notations, we will
pretend that $\HHH$ is the space $\HHH_p^{t,s}(R, C_0, C_1)$,
and not the more complicated space constructed using
\eqref{normeHnR}.

We wish to apply Theorem \ref{thm_physical} to $H_0 = \HHH\cap
L^\infty(\Leb)$, to obtain the conclusion of the proposition.
The first four assumptions of this theorem are trivial, but the
fifth one should be checked more carefully. The norm in $\HHH$
is a supremum of norms along admissible charts. Let us fix one
such chart, and consider $\tilde H$ the space obtained by using
only the norm in this chart. This space is not interesting from
the dynamical point of view (it is not invariant under
$\LSRB$), but $\HHH$ is continuously contained in $\tilde H$.
Moreover, \cite[Lemma 34]{baladi_gouezel_piecewise} shows that,
if an element $u\in \tilde H$ satisfies $\langle u, g\dLeb
\rangle = \int g\dd\mu_u$ for some nonnegative measure $\mu_u$,
then $\mu_u$ gives zero mass to the discontinuities of $T$.
Since $\HHH$ is smaller than $\tilde H$, this readily implies
the same result for $\HHH$.
\end{proof}

\begin{proof}[Proof of Proposition \ref{prop_futur}]
Consider the operator $\MM u =u \circ T$. This operator is
obtained locally by composing with hyperbolic maps, therefore
we may apply Theorem \ref{main_extended} to it (under suitable
transversality assumptions, that are exactly those of
Proposition \ref{prop_futur}) -- one should simply be careful
with notations, since stable and unstable directions are
exchanged. The assumption \eqref{D2} ensures that the essential
spectral radius of $\MM$ on the space $\HHH$ constructed in
Theorem \ref{main_extended} (for the parameters $p'=p/(p-1)$,
$s'=-t$ and $t'=-s$) is $<1$. Moreover, since $\HHH$ is a space
of distributions, one may prove as in the first step of the
proof of \cite[Theorem 33]{baladi_gouezel_piecewise} that there
is no eigenvalue of modulus $>1$ and no Jordan block for the
eigenvalues of modulus $1$, i.e., the iterates of $\MM$ on
$\HHH$ are uniformly bounded. As above, we will pretend that
$\HHH=\HHH_{p'}^{t',s'}(R, C_0, C_1)$ to simplify notations.

Define a (possibly infinite) norm $\nor{\cdot}$ (dual to the
$\HHH$--norm) on $L^\infty(\Leb)$ by
  \begin{equation}
  \nor{u} = \sup_{ v\in \HHH\cap L^\infty(\Leb),\ \norm{v}{\HHH}\leq 1} \left| \int uv\dLeb \right|,
  \end{equation}
and let $H_0$ be the set of elements of $L^\infty(\Leb)$ with
$\nor{u}<\infty$. Since the dual of $\MM$ is $\LSRB$
(as defined in the statement of
Theorem~ \ref{thm_physical}), it
follows that $\LSRB$ leaves $H_0$ invariant, that its essential
spectral radius on the completion of $H_0$ is $<1$, and that
the iterates of $\LSRB$ are uniformly bounded.

We wish to apply Theorem \ref{thm_physical} to this space
$H_0$, to conclude the proof. As above, the first four
conditions of this theorem are easily checked, but we should be
more careful for the last one.

For any hypersurface $Q$ bounding a domain $O_i$, consider a
decreasing sequence $K_n(Q)$ of neighborhoods of $Q$, with
sides parallel to $Q$ (in local coordinate charts), and
converging to $Q$. It follows from the argument in the second
step of the proof of Lemma \ref{maintechlemma} that the
$\HHH_{p'}^{t',s'}(R, C_0, C_1)$ norm of $1_{K_n(Q)}$ in any
admissible chart is uniformly bounded. Therefore,
$\norm{1_{K_n(Q)}}{\HHH} \leq C$ for some constant $C$
independent of $n$. The same argument even shows that
$1_{K_n(Q)}$ is uniformly bounded in the space
$\HHH_{p'}^{t'',s'}(R, C_0, C_1)$ if $t''\in (t', 1/p')$. By
Lemma \ref{embed}, this space is compactly included in $\HHH$,
therefore the sequence $1_{K_n(Q)}$ is compact in $\HHH$. Any
of its cluster values has to be $0$ as a distribution (since
$\Leb(K_n(Q)) \to 0$). Since $\HHH$ is a space of
distributions, it follows that all the cluster values of
$1_{K_n(Q)}$ are $0$, hence $1_{K_n(Q)}$ tends to $0$ in
$\HHH$.

Let $K_n$ be the (finite) union of the $K_n(Q)$ for all
boundary hypersurfaces of the sets $O_i$. Then $K_n$ contains
the discontinuity set of $T$ in its interior, and $1_{K_n}$
tends to $0$ in $\HHH$.

Consider $u$ in the completion $H$ of $H_0$, such that $u$ is a
limit of nonnegative functions $u_m$, and such that, for some
measure $\mu_u$, we have $\langle u, g\dLeb \rangle = \int
g\dd\mu_u$ for any $C^\alpha$ function $g$. Consider a
$C^\alpha$ function $g$ such that $0\leq g \leq 1_{K_n}$. We
have $\langle u_m, g \rangle \leq \langle u_m, 1_{K_n} \rangle$
since $u_m$ is a nonnegative function. Letting $m$ tend to
infinity, we get $\langle u, g\rangle \leq \langle u, 1_{K_n}
\rangle \leq \nor{u} \norm{1_{K_n}}{\HHH}$. Choosing $g$ equal
to $1$ on the discontinuity set of $T$, we get $\mu_u(\Disc T)
\leq \nor{u} \norm{1_{K_n}}{\HHH}$. Since this quantity tends
to $0$ when $n\to\infty$, this concludes the proof.
\end{proof}

\bibliography{biblio}
\bibliographystyle{amsalpha}

\end{document}